\documentclass[11pt,reqno]{amsart}

\usepackage{geometry}
\usepackage[utf8]{inputenc}
\usepackage{amstext,latexsym,amsbsy,amsmath,amssymb,amsthm,mathtools,relsize,geometry,enumerate}
\usepackage{hyperref}
\hypersetup{
	colorlinks   = true, %Colours links instead of ugly boxes
	urlcolor     = blue, %Colour for external hyperlinks
	linkcolor    = red, %Colour of internal links
	citecolor   = red %Colour of citations
}
\usepackage{mathtools, enumerate,enumitem}

\usepackage{mathrsfs}

\usepackage{comment}
\usepackage{graphicx}
\usepackage{epstopdf}
\setkeys{Gin}{width=\linewidth,totalheight=\textheight,keepaspectratio}
\graphicspath{{./images/}}

\usepackage[normalem]{ulem}

\usepackage{multicol}
\usepackage{multirow}
\usepackage{booktabs}
\usepackage{mathrsfs}
\usepackage{color}

\numberwithin{equation}{section}

\newtheorem{remark}{Remark}[section]
\newtheorem{theorem}{Theorem}[section]
\newtheorem{corollary}{Corollary}[section]
\newtheorem{lemma}{{Lemma}}[section]

\allowdisplaybreaks[4]

\newcommand{\cP}{\mathcal{P}}

\newcommand{\mP}{\mathbb{P}}

\newcommand{\mE}{\mathbb{E}}

\newcommand{\Ome}{\Omega}
\newcommand{\p}{\partial}
\newcommand{\nab}{\nabla}

\newcommand{\vH}{{\bf H}}

\newcommand{\vv}{{\bf v}}

\newcommand{\e}{\pmb{\varepsilon}}

\newcommand{\vvarphi}{\pmb{\varphi}}
\newcommand{\vsigma}{\pmb{\sigma}}
\newcommand{\vtheta}{\pmb{\theta}}

\begin{document}
	
	\title{A splitting mixed finite element method for a stochastic Keller-Segel system with multiplicative noise}

	\author{
		Thoa Thieu$^1$ and 
		Liet Vo$^2$ 
	}

	\thanks{$^1$ School of Mathematical and Statistical Sciences, The University of Texas Rio Grande Valley, Edinburg, TX 78539, U.S.A.  ({\tt thoa.thieu@utrgv.edu}).}
	
	\thanks{$^2$ School of Mathematical and Statistical Sciences, The University of Texas Rio Grande Valley, Edinburg, TX 78539, U.S.A.  ({\tt liet.vo@utrgv.edu}). This author was partially supported by the NSF grant DMS-2530211.}

\begin{abstract}
	In this paper, we propose and analyze a splitting mixed finite element method for a stochastic Keller--Segel system with logistic growth driven by multiplicative noise. By introducing an auxiliary variable representing the chemical gradient together with a time-lagged splitting strategy, the proposed method decouples the original coupled system into a sequence of simpler subproblems. Consequently, it eliminates the Ladyzhenskaya--Babu\v{s}ka--Brezzi stability constraint, permits the use of continuous piecewise linear finite element spaces for all unknowns, and avoids solving a fully coupled nonlinear system at each time step, thereby significantly reducing the computational cost. Combined with an implicit Euler time discretization, the proposed approach yields a fully discrete numerical scheme for the stochastic Keller--Segel system. Using a localization technique together with suitable stochastic stability arguments, we establish optimal strong error estimates for the fully discrete approximations and prove convergence in probability with explicit convergence rates. Numerical experiments verify the theoretical convergence rates and demonstrate that the proposed method successfully captures the global boundedness induced by the logistic growth term as well as the influence of multiplicative noise on chemotactic aggregation.
\end{abstract}

	\maketitle

	{\bf Key words.} Stochastic partial differential equations, multiplicative noise, Wiener process, It\^o stochastic integral, Euler scheme, finite element method, error estimates, stochastic Keller-Segel.
	
	\medskip
	
	{\bf AMS subject classifications.} 65N12, %Stability and convergence of numerical methods
	65N15, %Error bounds
	65N30. %Finite elements, Rayleigh-Ritz and Galerkin methods, finite methods

	\section{Introduction}\label{sec-1}
	In this paper, we study the following stochastic Keller-Segel system with logistic growth and multiplicative noise
	\begin{subequations}\label{eq1.1}
		\begin{align} \label{eq1.1a}
			du  &=  \bigl[\Delta u -\chi\nab\cdot (u \nab v)  \bigr]\, dt + g(u)\, dt+  B(u) dW(t) \quad&&\mbox{a.s. in}\, (0,T)\times D,\\ 
			\label{eq1.1b}	\Delta v&=  v-u\qquad&&\mbox{a.s. in}\, (0,T)\times D,    \\
			u(0) &= u_0, \quad v(0) = v_0 \qquad&&\mbox{a.s. in}\, D,
		%	\frac{\partial u}{\partial n} &=0, \quad v=0\qquad&&\mbox{a.s. on}\, (0,T)\times\p D,
		\end{align}
	\end{subequations}
	where $D$ is a bounded set in $ \mathbb{R}^2$, and $T > 0$. $u$ and $v$ denote, respectively, the cell density and the concentration of an attractive chemical signal, which satisfy the following Neumann boundary conditions
	\begin{align*}
		\frac{\partial u}{\partial \mathbf{n}} &=0, \quad \frac{\partial v}{\partial \mathbf{n}}=0\qquad&&\mbox{a.s. on}\, (0,T)\times\p D,
	\end{align*}
	where the symbol $\frac{\partial}{\partial \mathbf{n}}$ denotes the derivative with respect to the outer normal of $\partial D$.
	
	The constant $\chi>0$ denotes the chemotactic sensitivity, and
	$\{W(t):t\ge0\}$ is a real-valued Wiener process. We consider the
	prototype logistic reaction term $g(u)=u-u^3,$ which is commonly used in mathematical models of population dynamics and chemotaxis; see \cite{chen2025well}.
	
Chemotaxis refers to the directed movement of cells or microorganisms in response to chemical gradients in their surrounding environment. It plays a fundamental role in numerous biological and biomedical processes, including bacterial aggregation, embryonic development, wound healing, angiogenesis, tumor invasion, and immune cell migration. One of the most influential mathematical descriptions of chemotactic movement is the Keller--Segel model, originally proposed by Keller and Segel \cite{keller1970initiation,keller1971model}, in which the evolution of the cell density is coupled with the concentration of a chemoattractant. Owing to its broad range of biological applications and rich mathematical structure, the Keller--Segel system has become a prototype model for studying pattern formation, aggregation phenomena, and finite-time blow-up in chemotaxis; see, for example, \cite{hillen2009user,tindall2008overview,osaki2001finite} and the references therein.

The deterministic Keller--Segel systems with logistic sources have received considerable attention due to their important biological relevance and remarkable mathematical properties. In many biological populations, the logistic source models the combined effects of cell proliferation and density-dependent inhibition arising from competition for nutrients, space, or other environmental resources. From a mathematical viewpoint, the logistic reaction provides a natural damping mechanism that counteracts the excessive aggregation induced by chemotaxis, thereby preventing finite-time blow-up in the classical Keller--Segel model. Consequently, the incorporation of logistic growth not only yields a more realistic description of population dynamics but also fundamentally changes the qualitative behavior of solutions, leading to global existence, boundedness, long-time stabilization, and the emergence of nontrivial spatial patterns under suitable conditions. Furthermore, the interaction among diffusion, chemotaxis, and logistic growth gives rise to a variety of aggregation patterns observed in biological systems. We refer the reader to \cite{mimura1996aggregating,tello2007chemotaxis,lankeit2017generalized,cao2014boundedness} and the references therein.

In many biological systems, however, chemotactic movement is inevitably affected by random environmental fluctuations, uncertain external stimuli, and intrinsic cellular variability. Such random effects cannot be adequately described by deterministic models and naturally lead to stochastic Keller--Segel equations driven by transport or multiplicative noise. These stochastic models provide a more realistic description of chemotactic phenomena and have attracted increasing attention in recent years. Existing analytical studies have focused primarily on the well-posedness and qualitative properties of stochastic Keller--Segel equations. For stochastic Keller--Segel systems with transport noise and multiplicative noise, important solution properties such as positivity, mass preservation, global well-posedness, and finite-time blow-up have been investigated in \cite{misiats2022global,huang2021microscopic,mukherjee2026uniqueness,mayorcas2023blow}. More recently, Chen et al.~\cite{chen2025well} established the existence and uniqueness of global mild solutions for a stochastic Keller--Segel system with logistic growth driven by multiplicative noise. Similar to the deterministic setting, the logistic reaction term prevents excessive chemotactic aggregation and guarantees the global existence of solutions.

Despite the significant progress in the analytical theory, the numerical analysis of stochastic Keller--Segel equations remains in its infancy. To the best of our knowledge, the only rigorous numerical analysis available is the recent work \cite{vo2025analysis}, in which a Crank--Nicolson splitting mixed finite element method was developed for the stochastic Keller--Segel equation driven by transport noise in the Stratonovich sense. In contrast, for stochastic Keller--Segel equations with multiplicative noise, the development and analysis of numerical methods remain completely open.

The numerical approximation of stochastic Keller--Segel equations presents several substantial challenges. For general stochastic Keller--Segel models, an effective numerical method should faithfully capture the fundamental qualitative properties of the underlying system, including positivity, mass preservation, and, whenever applicable, the finite-time blow-up behavior of solutions. In the presence of a logistic source, although the continuous problem admits global solutions and the logistic damping prevents finite-time blow-up, it remains essential for the numerical method to accurately reproduce this global boundedness and the long-time dynamics of the solution.

From the analytical viewpoint, the main difficulty stems from the nonlinear chemotactic flux $\nabla\!\cdot(u\nabla v)$, which contains a nonlinear second-order derivative that is strongly coupled with the stochastic forcing. This interaction considerably complicates the derivation of discrete stability estimates, since the stochastic perturbation acts directly on the highest-order nonlinear term. Consequently, standard finite element discretizations typically require higher-order finite element spaces to approximate the chemical gradient accurately, leading to increased computational complexity. Moreover, the simultaneous treatment of the nonlinear chemotactic coupling and the stochastic perturbation presents significant analytical challenges in establishing rigorous stability, convergence, and error estimates.

The above challenges motivate the present work, whose objective is to develop an efficient and rigorously analyzed finite element method for stochastic Keller--Segel equations with multiplicative noise. To the best of our knowledge, this is the first work establishing rigorous error estimates for finite element approximations of stochastic Keller--Segel equations driven by multiplicative noise. Since the stochastic Keller--Segel system with logistic growth admits global solutions, it is also of particular interest to investigate whether the proposed numerical method is capable of reproducing this important qualitative behavior. Through a series of numerical experiments, we demonstrate that the proposed method successfully captures the long-time boundedness of the solution, the regularizing effect of the logistic growth term, and the influence of multiplicative noise on chemotactic aggregation.

Inspired by the mixed formulations in \cite{zhang2016characteristic,vo2025analysis}, we first reformulate the original stochastic Keller--Segel system by introducing the auxiliary variable $\vsigma=\nabla v$. This reformulation converts the nonlinear second-order chemotactic flux into a first-order coupled system and provides a direct approximation of the chemical gradient. A standard mixed finite element discretization of this formulation, however, requires the satisfaction of the Ladyzhenskaya--Babu\v{s}ka--Brezzi (inf--sup) condition to guarantee the stability and accuracy of the numerical approximation. Consequently, the choice of finite element spaces is considerably restricted and, in practice, often requires higher-order polynomial finite elements, leading to increased computational complexity.

To overcome these difficulties, we adopt the time-lagged strategy proposed in \cite{vo2025analysis,zhang2016characteristic} and develop a splitting mixed finite element method. The proposed splitting procedure decouples the original coupled system into a sequence of subproblems that are solved successively at each time step, thereby eliminating the need to solve a fully coupled mixed system and allowing the use of continuous piecewise linear finite element spaces for all unknown variables. This substantially reduces the computational complexity while retaining a direct approximation of the chemical gradient. Such a decoupled strategy is particularly attractive for large-scale stochastic simulations, where a large number of independent sample paths must be computed in Monte Carlo approximations of statistical quantities. Since each realization requires repeatedly solving the underlying deterministic system, reducing the computational cost of a single time step leads to a significant improvement in the overall efficiency of the stochastic simulation.

To analyze the proposed method, we employ a localization technique \cite{carelli2012rates} together with suitable stochastic stability arguments to control the strong interaction between the nonlinear chemotactic flux and the multiplicative stochastic forcing. Based on these ingredients, we establish rigorous strong error estimates for the proposed splitting mixed finite element method and derive convergence in probability with explicit convergence rates under appropriate regularity assumptions.

The remainder of the paper is organized as follows. In Section~\ref{sec2}, we introduce the notation, assumptions, and preliminary results used throughout the paper. We then derive the mixed variational formulation of the stochastic Keller--Segel system and establish several stability and regularity estimates for the exact solution. In Section~\ref{sec4}, we present the fully discrete splitting mixed finite element method, prove its stability, and establish rigorous strong error estimates together with convergence in probability. Section~\ref{sec5} is devoted to numerical experiments, where we verify the theoretical convergence rates and investigate the qualitative behavior of the stochastic Keller--Segel system with logistic growth, including the long-time boundedness of the numerical solutions and the influence of multiplicative noise on chemotactic aggregation. Finally, Section~\ref{sec6} concludes the paper with several remarks and possible directions for future research.

	\section{Preliminaries}\label{sec2}
	\subsection{Notations}
Standard notation for function spaces will be adopted throughout this paper.
Let $(\cdot,\cdot)$ denote the standard $L^2(D)$ inner product with induced norm $\|\cdot\|_{L^2}$.
Throughout the paper, $C$ denotes a generic positive constant independent of the mesh parameters $h$ and $k$, whose value may change from line to line.

Let $(\Omega,\mathcal F,\{\mathcal F_t\}_{t\ge0},\mathbb P)$ be a filtered probability space satisfying the usual conditions, where $\mathbb P$ is the probability measure, $\mathcal F$ is the $\sigma$-algebra, and $\{\mathcal F_t\}_{t\ge0}\subset\mathcal F$ is a filtration. For a random variable $v$ defined on $(\Omega,\mathcal F,\{\mathcal F_t\},\mathbb P)$, we denote its expectation by $\mathbb E[v]$.

For a Banach space $X$ with norm $\|\cdot\|_X$ and $1\le p<\infty$, we define the Bochner space
\[
L^p(\Omega;X)
:=
\left\{
v:\Omega\to X:
\mathbb E\bigl[\|v\|_X^p\bigr]<\infty
\right\},
\]
equipped with the norm
\[
\|v\|_{L^p(\Omega;X)}
:=
\bigl(
\mathbb E[\|v\|_X^p]
\bigr)^{1/p}.
\]

We also make use of the Ladyzhenskaya inequality in two space dimensions,
\[
\|u\|_{L^4(D)}
\le
C_L
\|u\|_{L^2(D)}^{1/2}
\|u\|_{H^1(D)}^{1/2},
\]
where $C_L>0$ depends only on the domain $D$.

Next, we denote
\[ \vH_{\vsigma}^1 := \left\{\vv \in \vH^1:\, \vv\cdot\mathbf{n} = 0\,\mbox{ on \,}\partial D\right\},
\]
and we shall use the following equivalent norm (see \cite[Corollary 3.5]{amrouche2013lp} and \cite[Section 2.1]{duarte2021numerical}):
\begin{align}
	\|\vsigma\|_{H^1}^2
	=
	\|\vsigma\|_{L^2}^2
	+
	\|\nabla\cdot\vsigma\|_{L^2}^2
	+
	\|\operatorname{rot}\vsigma\|_{L^2}^2,
	\qquad
	\forall\,\vsigma\in\mathbf H_{\vsigma}^1(D).
\end{align}

For $1<p<\infty$, let $A:=A_p$ denote the realization of the Neumann Laplacian $-\Delta$ on $L^p(D)$ with domain
\[
D(A)
=
\left\{
\phi\in W^{2,p}(D):
\frac{\partial\phi}{\partial \mathbf{n}}=0
\quad\text{on }\partial D
\right\}.
\]
It is well known that the shifted operator $A+I$ is sectorial and therefore admits fractional powers $(A+I)^\beta$ for every $\beta\ge0$. Furthermore, if $\beta>\frac1p$, then
\[
D\bigl((A+I)^\beta\bigr)
\hookrightarrow
C(\overline D).
\]

Let $\{e^{-tA}\}_{t\ge0}$ denote the analytic semigroup generated by $-A$.

Next, consider the elliptic Neumann problem
\[
-\Delta w+w=f
\quad\text{in }D,
\qquad
\frac{\partial w}{\partial \mathbf{n}}=0
\quad\text{on }\partial D,
\]
where $f\in L^p(D)$. The unique weak solution admits the Green's function representation
\[
w(x)
=
\int_D
G(x,y)f(y)\,dy,
\qquad
x\in D,
\]
where $G(\cdot,\cdot)$ denotes the Green's function associated with the operator $-\Delta+I$ under homogeneous Neumann boundary conditions.

By the elliptic regularity estimate \cite{evans2022partial},
\begin{align}\label{elliptic_estimate}
	\|w\|_{W^{2,r}(D)}
	\le
	C
	\|f\|_{L^r(D)},
	\qquad
	2\leq r<\infty,
\end{align}
and the Sobolev embedding
\[
W^{1,r}(D)
\hookrightarrow
C(\overline D),
\]
we further obtain
\begin{align}\label{sobolev_embedding}
	\|w\|_{L^\infty(D)}
	+
	\|\nabla w\|_{L^\infty(D)}
	\le
	C
	\|f\|_{L^r(D)}.
\end{align}
	
		\subsection{Assumptions}
	
	Concerning the stochastic forcing, in this paper, we will make the following assumptions on $B$.
	\begin{enumerate}[label=(\Alph*)]
		
		\item For $m = 0 ,1$, suppose that $B: H^{m}(D) \rightarrow H^m(D)$. Moreover, there exists a constant $C_{B}>0$ such that
		\begin{align}\label{Assump_Lipschitz}
			\|B(u) - B(v)\|_{H^m} \leq C_{B}\|u-v\|_{H^m}.
		\end{align}
		
		\item There exists a constant $c_B>0$ such that
	\begin{align}\label{Assump_LinearGrowth}
			\|B(u)\|_{H^m} \leq c_B (1 + \|u\|_{H^m}),
	\end{align}
		
	\end{enumerate}
	
Under the above assumptions of $B$, we recall the following mild solution for \eqref{eq1.1}, which has been established in \cite[Theorem 2.1]{chen2025well}. Suppose $u_0 \in C(\overline{D})$, then there exists a unique mild solution $(u,v) \in C(\bar{D}) \times W^{1,\infty}(D)$ such that $\mP$-a.s.
	\begin{align}\label{mild_sol}
		u(t)
		&=
		e^{-tA}u_0
		-\chi
		\int_0^t
		e^{-(t-s)A}
		\nabla\!\cdot\bigl(u(s)\nabla v(s)\bigr)\,ds
		\nonumber\\
		&\quad
		+
		\int_0^t
		e^{-(t-s)A}
		g(u(s))\,ds
		+
		\int_0^t
		e^{-(t-s)A}
		B(u(s))\,dW(s),
		\qquad t\in[0,T],
	\end{align}
	where
	\begin{equation}
		v(t)=G*u(t), \qquad t\in[0,T],
		\label{eq:mild_v}
	\end{equation}
	and $G$ denotes the Green's function associated with the elliptic operator
	$-\Delta+I$ subject to homogeneous Neumann boundary conditions.
	
	In addition, through the paper, we assume that if $u_0 \in L^2(\Ome; L^2(D))$, then there exists a unique weak solution $(u,v) \in L^2(\Ome; C(0,T; L^2(D))\cap L^2(0,T;H^1(D)))$ satisfying $\mP$-a.s.
		\begin{subequations}\label{eq_2.1}
			\begin{align}\label{eq_2.1a}
				(u(t),\phi) &= (u_0,\phi) - \int_0^t (\nab u(s), \nab \phi)\, ds \\\nonumber
				&\qquad+ \chi \int_0^t (u\nab v, \nab \phi)\, ds + \int_{0}^t(g(u),\phi)\, ds+  \int_0^t (B(u),\phi) dW(s),\\
				\label{eq_2.1b}		(\nab v(t),& \nab\psi) + (v(t), \psi) = (u(t),\psi),
			\end{align}
		\end{subequations}
		for all $\phi \in H^1(D)$, and $\psi \in H^1(D)$.
\begin{remark}
	In this paper, we focus on the development and analysis of numerical methods for \eqref{eq1.1}. Although we do not establish the existence and uniqueness of variational (weak) solutions, they can be obtained from the corresponding well-posedness theory for mild solutions. Indeed, \cite[Theorem~2.1]{chen2025well} establishes the existence and uniqueness of a mild solution to \eqref{eq1.1}. By combining the regularity properties of the mild solution with the assumption that $B(v)\in L^2(D)$ for every $v\in L^2(D)$ and adapting the argument of \cite[Theorem~6.5]{da2014stochastic}, one can show that the mild solution is also a variational (weak) solution satisfying \eqref{eq_2.1}. Throughout the error analysis, we further assume that the weak solution satisfies the appropriate spatial and temporal regularity assumptions required to establish the optimal convergence rates of the proposed numerical methods.
\end{remark}

\begin{remark}
	For simplicity of presentation, we restrict our attention to a real-valued Wiener process. The proposed splitting mixed finite element method and the corresponding error analysis can be extended to stochastic Keller--Segel systems driven by more general $Q$-Wiener or cylindrical Wiener processes through the Karhunen--Lo\`eve expansion; see, for example, \cite[Theorem~10.7]{lord2014introduction}. The extension requires only routine modifications based on the Hilbert-space formulation of stochastic integration, and the overall proof strategy remains unchanged.
\end{remark}

	Next, we establish the following high-moment stability estimates for the variational solution $u$.

	{	\begin{lemma}\label{Stability_PDE} Let $(u,v)$ be the variational solution of \eqref{eq_2.1}. Then, the following estimates are satisfied:
			\begin{enumerate}
				\item[(a)] Let $p\ge 2$. If 
				$u_0\in L^p(\Omega;L^2(D))$ and suppose that $B$ satisfies the assumptions \eqref{Assump_LinearGrowth} with $m =0$, then there exists a constant
				\[
				C_{2,p}=C\bigl(p,T,\chi,D,c_B,\mathbb E[\|u_0\|_{L^2}^{p}]\bigr)>0
				\]
				such that the weak solution $u$ satisfies the following high-moment estimate:
				\begin{align}\label{eq2.5}
					\mathbb E\Bigg[
					\sup_{0\le s\le T}\|u(s)\|_{L^2}^{p}
					&+
					\int_0^T
					\|u(s)\|_{L^2}^{p-2}\|\nabla u(s)\|_{L^2}^{2}\,ds
					+
					\int_0^T
					\|u(s)\|_{L^2}^{p-2}\|u(s)\|_{L^4}^{4}\,ds
					\Bigg]
					\le C_{2,p} .
				\end{align}

\item[(b)] Let $u_0\in L^4(\Omega;L^4(D))$ and suppose that $B$ 
satisfies \eqref{Assump_LinearGrowth} with $m = 0$ and $m =1$.
Then there exists a constant
\[
C_{3}
=
C\left(
T,\chi,D,c_B,
\mathbb E\left[\|u_0\|_{L^4}^{4}\right]
\right)>0
\]
such that the weak solution $u$ satisfies the estimate
\begin{align}\label{eq2.7}
	\mathbb E\Bigg[
	\sup_{0\le s\le T}\|u(s)\|_{L^4}^{4}
	&+
	\int_0^T
	\|u(s)\nabla u(s)\|_{L^2}^{2}\,ds
	+
	\int_0^T
	\|u(s)\|_{L^6}^{6}\,ds
	\Bigg]
	\le C_{3}.
\end{align}
			\end{enumerate}
			
		\end{lemma}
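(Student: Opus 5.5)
The plan is to apply It\^o's formula to suitable functionals of $u$ and exploit the structure of the elliptic equation \eqref{eq_2.1b}, namely $\Delta v = v-u$, to convert the chemotactic term into lower-order terms that the logistic damping $-u^3$ can absorb. Both parts follow the same template: It\^o formula, algebraic cancellation of the chemotaxis term, Young absorption into the dissipative terms, then the Burkholder--Davis--Gundy (BDG) inequality and Gronwall.

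For (a), I first apply It\^o's formula to $\|u\|_{L^2}^2$ (formally with $\phi=u$ in \eqref{eq_2.1a}; rigorously via a Galerkin approximation). This gives the following terms.
\begin{itemize}
\item Diffusion yields $-2\|\nabla u\|_{L^2}^2$.
\item For the chemotactic term, integrate by parts using the Neumann condition and $\Delta v=v-u$:
\[
2\chi(u\nabla v,\nabla u)=\chi(\nabla u^2,\nabla v)=-\chi(u^2,\Delta v)=\chi\|u\|_{L^3}^3-\chi(u^2,v).
\]
\item The logistic term gives $2\|u\|_{L^2}^2-2\|u\|_{L^4}^4$.
\end{itemize}
I then bound the chemotactic contributions:
\[
\chi\|u\|_{L^3}^3\le \tfrac12\|u\|_{L^4}^4+C\|u\|_{L^2}^2,
\qquad
|(u^2,v)|\le \|u\|_{L^4}^2\|v\|_{L^2}\le \|u\|_{L^4}^2\|u\|_{L^2}\le \tfrac12\|u\|_{L^4}^4+C\|u\|_{L^2}^2.
\]
The second uses $\|v\|_{L^2}\le\|u\|_{L^2}$, obtained by testing \eqref{eq_2.1b} with $v$. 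The It\^o correction is bounded by $\|B(u)\|_{L^2}^2\le 2c_B^2(1+\|u\|_{L^2}^2)$ using \eqref{Assump_LinearGrowth}. Next I apply It\^o's formula to $(\|u\|_{L^2}^2)^{p/2}$, which produces the weights $\|u\|_{L^2}^{p-2}$ in \eqref{eq2.5}, and an extra correction term bounded by $C\|u\|_{L^2}^{p-4}(B(u),u)^2\le C(1+\|u\|_{L^2}^p)$. Finally, I take the supremum in time and the expectation. The martingale term is handled by BDG:
\[
\mathbb E\sup_t\Big|\int_0^t\|u\|_{L^2}^{p-2}(B(u),u)\,dW\Big|\le \tfrac12\mathbb E\sup_t\|u\|_{L^2}^p+C\,\mathbb E\int_0^T(1+\|u\|_{L^2}^p)\,ds.
\]
Gronwall's inequality then closes the estimate. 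A stopping-time localization is needed so that all expectations are finite before absorbing.

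For (b), I apply It\^o's formula to $\int_D u^4\,dx$.
\begin{itemize}
\item Diffusion yields $-12\|u\nabla u\|_{L^2}^2$.
\item The chemotactic term becomes
\[
12\chi(u^3\nabla u,\nabla v)=3\chi(\nabla u^4,\nabla v)=3\chi\int_D u^5\,dx-3\chi\int_D u^4v\,dx.
\]
\item The logistic term gives $4\|u\|_{L^4}^4-4\|u\|_{L^6}^6$.
\end{itemize}
The chemotactic contributions are absorbed as follows. First, $u^5\le \varepsilon u^6+C_\varepsilon u^4$. Second, $\int u^4|v|\le \varepsilon\|u\|_{L^6}^6+C\|v\|_{L^3}^3$, and $\|v\|_{L^3}\le C\|u\|_{L^2}$ by \eqref{elliptic_estimate}--\eqref{sobolev_embedding}. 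This gives $\|v\|_{L^3}^3\le C(1+\|u\|_{L^2}^4)$, which is controlled by part (a) with $p=4$, since $\|u_0\|_{L^2}\le C\|u_0\|_{L^4}$.

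The main obstacle is the It\^o correction $6\int_D u^2B(u)^2\,dx$, where only Hilbert-space bounds on $B$ are available. I would first try
\[
6\int_D u^2B(u)^2\,dx\le 6\|u\|_{L^4}^2\|B(u)\|_{L^4}^2\le C\|u\|_{L^4}^2\|B(u)\|_{H^1}^2\le C\|u\|_{L^4}^2(1+\|u\|_{H^1}^2),
\]
using $H^1\hookrightarrow L^4$ and \eqref{Assump_LinearGrowth} with $m=1$. This is bounded by $C(1+\|u\|_{L^4}^4)(1+\|\nabla u\|_{L^2}^2)$. Here $\int_0^T\|\nabla u\|_{L^2}^2\,ds$ has finite moments by part (a), but it enters as a random Gronwall coefficient. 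I would therefore use a stopping-time argument: either a stochastic Gronwall lemma, or an exponential weight $\exp\bigl(-C\int_0^t(1+\|\nabla u\|_{L^2}^2)\,ds\bigr)$ in the It\^o formula, combined with H\"older's inequality and the moment bounds from (a).

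If this loses too much, the alternative is an interpolation that makes the term absorbable without a random coefficient. One would try to bound $\|u\|_{L^4}^2\|\nabla u\|_{L^2}^2$ by $\varepsilon\|u\nabla u\|_{L^2}^2+\varepsilon\|u\|_{L^6}^6$ plus terms controlled by (a). This is the step I expect to require the most care.

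For the martingale term $4\int_0^t(u^3,B(u))\,dW$, I would use BDG together with
\[
(u^3,B(u))^2\le \|u\|_{L^4}^6\|B(u)\|_{L^4}^2,
\]
treated in the same way as the It\^o correction.
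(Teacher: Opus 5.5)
Your part (a) is correct and is essentially the paper's argument. Using $\Delta v=v-u$ together with $\|v\|_{L^2}\le\|u\|_{L^2}$ is equivalent to the paper's bound $\|\Delta v\|_{L^2}\le C\|u\|_{L^2}$. In (b), your treatment of the chemotactic term via $\int_D u^5$ and $\int_D u^4v$ is a valid, slightly more elementary alternative to the paper's $L^4$ elliptic estimate.

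\textbf{The gap is in the It\^o correction and the martingale term of (b).} Bounding $\|B(u)\|_{L^4}^2\le C\|B(u)\|_{H^1}^2$ produces the product $\|u\|_{L^4}^2\|\nabla u\|_{L^2}^2$, and neither of your remedies recovers the stated estimate.
\begin{enumerate}
\item A stochastic Gronwall lemma, or the weight $\exp\bigl(-C\int_0^t(1+\|\nabla u\|_{L^2}^2)\,ds\bigr)$, only yields weighted bounds such as $\mathbb{E}\bigl[e^{-C\int_0^T\|\nabla u\|_{L^2}^2ds}\sup_t\|u\|_{L^4}^4\bigr]$. Removing the weight requires $\mathbb{E}\exp\bigl(C\int_0^T\|\nabla u\|_{L^2}^2\,ds\bigr)<\infty$ for a non-small $C$. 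Part (a) and the hypothesis $u_0\in L^4(\Omega;L^4(D))$ give only polynomial moments.
\item The interpolation route already fails pointwise in time, because (a) controls $\|\nabla u\|_{L^2}^2$ only with weights $\|u\|_{L^2}^{p-2}$, and $\|u\|_{L^4}$ is not controlled by $\|u\|_{L^2}$. Take $u$ to be a tall thin bump with $\|u\|_{L^4}^2=K$ and small $L^2$ norm, plus a small oscillation $\delta\sin(Nx_1)$ on a disjoint set. As $N\to\infty$, $\|u\|_{L^4}^2\|\nabla u\|_{L^2}^2$ is at least of order $K\delta^2N^2$. Meanwhile $\|u\nabla u\|_{L^2}^2$, $\|u\|_{L^6}^6$ and every quantity appearing in (a) grow at most like $C\delta^2N^2$, with $C$ independent of $K$. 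So no absorption inequality of the kind you propose can hold.
\end{enumerate}

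\textbf{The missing idea} is to keep one factor of $B(u)$ in $L^2$ through the Ladyzhenskaya inequality $\|B(u)\|_{L^4}^2\le C\|B(u)\|_{L^2}\|B(u)\|_{H^1}$. With \eqref{Assump_LinearGrowth} for $m=0,1$ this gives
\[
6\int_D u^2B(u)^2\,dx\le C\|u\|_{L^4}^2(1+\|u\|_{L^2})(1+\|u\|_{H^1})\le C\|u\|_{L^4}^4\bigl(1+\|u\|_{L^2}^2\bigr)+C\bigl(1+\|u\|_{H^1}^2\bigr).
\]
After time integration and expectation, both terms are bounded directly by (a) with $p=2$ and $p=4$. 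No Gronwall argument with a random coefficient is needed.

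For the martingale term, pair $u^3$ in $L^2$ rather than $L^{4/3}$: $(u^3,B(u))^2\le\|u\|_{L^6}^6\|B(u)\|_{L^2}^2$. BDG, the $m=0$ growth bound and Young then give
\[
\mathbb{E}\Bigl[\sup_{t\le T}\Bigl|\int_0^t(u^3,B(u))\,dW\Bigr|\Bigr]\le\varepsilon\,\mathbb{E}\int_0^T\|u\|_{L^6}^6\,ds+C_\varepsilon\,\mathbb{E}\Bigl[1+\sup_{t\le T}\|u(t)\|_{L^2}^2\Bigr].
\]
The first term is absorbed by the logistic dissipation $4\int_0^T\|u\|_{L^6}^6\,ds$, and the second is controlled by (a). This is how the paper closes (b).
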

	}
	
	\begin{proof} 
		(a) To prove \eqref{eq2.5}, we apply the It\^o formula to $\Phi(u(t)) = \|u(t)\|^p_{L^2}$ to yield
		\begin{align}
			\|u(t)\|_{L^2}^{p}
			&=
			\|u_0\|_{L^2}^{p}
			+p\int_0^t
			\|u(s)\|_{L^2}^{p-2}
			\bigl(u(s),\Delta u(s)
			-\chi\nabla\cdot(u(s)\nabla v(s))
			+g(u(s))\bigr)\,ds
			\nonumber\\
			&\quad
			+p\int_0^t
			\|u(s)\|_{L^2}^{p-2}
			\bigl(u(s),B(u(s))\bigr)\,dW(s)
			+\frac{p}{2}\int_0^t
			\|u(s)\|_{L^2}^{p-2}
			\|B(u(s))\|_{L^2}^{2}\,ds
			\nonumber\\
			&\quad
			+\frac{p(p-2)}{2}\int_0^t
			\|u(s)\|_{L^2}^{p-4}
			\bigl(u(s),B(u(s))\bigr)^{2}\,ds,
		\end{align}
		which, together with integration by parts, implies
		\begin{align}
			\|u(t)\|_{L^2}^{p}
			&+
			p\int_0^t
			\|u(s)\|_{L^2}^{p-2}\|\nabla u(s)\|_{L^2}^{2}\,ds
			+
			p\int_0^t
			\|u(s)\|_{L^2}^{p-2}\|u(s)\|_{L^4}^{4}\,ds
			\\\nonumber
			&=
			\|u_0\|_{L^2}^{p}
			+
			p\int_0^t\|u(s)\|_{L^2}^{p}\,ds
			+p\chi\int_0^t
			\|u(s)\|_{L^2}^{p-2}
			\bigl(u(s)\nabla v(s),\nab u(s)\bigr)\,ds
		\\\nonumber
			&\quad
			+p\int_0^t
			\|u(s)\|_{L^2}^{p-2}
			\bigl(u(s),B(u(s))\bigr)\,dW(s)
		\\\nonumber
			&\quad
			+\frac{p}{2}\int_0^t
			\|u(s)\|_{L^2}^{p-2}
			\|B(u(s))\|_{L^2}^{2}\,ds
			+\frac{p(p-2)}{2}\int_0^t
			\|u(s)\|_{L^2}^{p-4}
			\bigl(u(s),B(u(s))\bigr)^{2}\,ds\\\nonumber
			&:=
			\|u_0\|_{L^2}^{p}
			+
			p\int_0^t\|u(s)\|_{L^2}^{p}\,ds
			\\\nonumber
			&\quad
			+p\int_0^t
			\|u(s)\|_{L^2}^{p-2}
			\bigl(u(s),B(u(s))\bigr)\,dW(s)
			\\\nonumber
			&\quad
			+ I_1 + I_2 + I_3.
		\end{align}
		
		We estimate $I_1, I_2$ and $I_3$ as follows.
		First, using integration by parts and then using the Cauchy--Schwarz
		inequality, we obtain
		\begin{align*}
			I_1
			&=
			p\chi\int_0^t
			\|u(s)\|_{L^2}^{p-2}
			\bigl(u(s)\nabla v(s),\nabla u(s)\bigr)\,ds  \\
			&=
			-\frac{p\chi}{2}\int_0^t
			\|u(s)\|_{L^2}^{p-2}
			\bigl(\Delta v(s),u(s)^2\bigr)\,ds \\
			&\le
			\frac{p\chi}{2}
			\int_0^t
			\|u(s)\|_{L^2}^{p-2}
			\|\Delta v(s)\|_{L^2}
			\|u(s)\|_{L^4}^{2}\,ds .
		\end{align*}
		
		Using the elliptic estimate \eqref{elliptic_estimate}, which implies
		$\|\Delta v\|_{L^2}\le C\|u\|_{L^2}$, we have
		\begin{align*}
			I_1
			&\le
			 Cp\chi
			\int_0^t
			\|u(s)\|_{L^2}^{p-1}
			\|u(s)\|_{L^4}^{2}\,ds  \\
			&\le
			\frac{p}{4}
			\int_0^t
			\|u(s)\|_{L^2}^{p-2}
			\|u(s)\|_{L^4}^{4}\,ds
			+
			C p\chi^2
			\int_0^t
			\|u(s)\|_{L^2}^{p}\,ds .
		\end{align*}
		
		Next, using Assumption~\eqref{Assump_LinearGrowth} with $m=0$ and the
		Cauchy--Schwarz inequality, we also have
		\begin{align*}
			I_2+I_3
			&:=
			\frac{p}{2}\int_0^t
			\|u(s)\|_{L^2}^{p-2}\|B(u(s))\|_{L^2}^{2}\,ds
			+
			\frac{p(p-2)}{2}\int_0^t
			\|u(s)\|_{L^2}^{p-4}
			\bigl(u(s),B(u(s))\bigr)^2\,ds  \\
			&\le
			\frac{p}{2}\int_0^t
			\|u(s)\|_{L^2}^{p-2}\|B(u(s))\|_{L^2}^{2}\,ds
			+
			\frac{p(p-2)}{2}\int_0^t
			\|u(s)\|_{L^2}^{p-2}
			\|B(u(s))\|_{L^2}^{2}\,ds  \\
			&=
			\frac{p(p-1)}{2}\int_0^t
			\|u(s)\|_{L^2}^{p-2}\|B(u(s))\|_{L^2}^{2}\,ds  \\
			&\le
			C_p c_B \int_0^t
			\|u(s)\|_{L^2}^{p-2}
			\left(1+\|u(s)\|_{L^2}^{2}\right)\,ds  \\
			&\le
			C_{p,c_B}\int_0^t
			\left(1+\|u(s)\|_{L^2}^{p}\right)\,ds .
		\end{align*}
		
		Substituting the estimates for $I_1$, $I_2$, and $I_3$ back into the
		previous identity and absorbing the first term in the estimate of $I_1$ into
		the left-hand side, we obtain
		\begin{align}
			\|u(t)\|_{L^2}^{p}
			&+
			p\int_0^t
			\|u(s)\|_{L^2}^{p-2}\|\nabla u(s)\|_{L^2}^{2}\,ds
			+
			\frac{3p}{4}\int_0^t
			\|u(s)\|_{L^2}^{p-2}\|u(s)\|_{L^4}^{4}\,ds
			\nonumber\\
			&\le
			\|u_0\|_{L^2}^{p}
			+
			C_{p,\chi,c_B}\int_0^t
			\left(1+\|u(s)\|_{L^2}^{p}\right)\,ds
			\nonumber\\
			&\quad
			+
			p\int_0^t
			\|u(s)\|_{L^2}^{p-2}
			\bigl(u(s),B(u(s))\bigr)\,dW(s).
		\end{align}
		Taking the supremum over $t\in[0,T]$ and then taking expectation, we get
		\begin{align}
			\mE\Bigg[
			\sup_{0\le t\le T}\|u(t)\|_{L^2}^{p}
			&+
			p\int_0^T
			\|u(s)\|_{L^2}^{p-2}\|\nabla u(s)\|_{L^2}^{2}\,ds
			\nonumber\\
			&+
			\frac{3p}{4}\int_0^T
			\|u(s)\|_{L^2}^{p-2}\|u(s)\|_{L^4}^{4}\,ds
			\Bigg]
			\nonumber\\
			&\le
			\mE\left[\|u_0\|_{L^2}^{p}\right]
			+
			C_{p,\chi,c_B}
			\mE\left[
			\int_0^T
			\left(1+\|u(s)\|_{L^2}^{p}\right)\,ds
			\right]
			\nonumber\\
			&\quad
			+
			p\,\mE\left[
			\sup_{0\le t\le T}
			\left|
			\int_0^t
			\|u(s)\|_{L^2}^{p-2}
			\bigl(u(s),B(u(s))\bigr)\,dW(s)
			\right|
			\right].
		\end{align}
		For the stochastic term, by the Burkholder--Davis--Gundy inequality,
		the Cauchy--Schwarz inequality, and Assumption~\eqref{Assump_LinearGrowth},
		we have
	\begin{align}
		& p\,\mE\left[
		\sup_{0\le t\le T}
		\left|
		\int_0^t
		\|u(s)\|_{L^2}^{p-2}
		\bigl(u(s),B(u(s))\bigr)\,dW(s)
		\right|
		\right]
		\nonumber\\
		&\quad\le
		C_p\mE\left[
		\left(
		\int_0^T
		\|u(s)\|_{L^2}^{2p-2}
		\|B(u(s))\|_{L^2}^{2}\,ds
		\right)^{1/2}
		\right]
		\nonumber\\
		&\quad\le
		C_{p,c_B}\mE\left[
		\left(
		\sup_{0\le s\le T}\|u(s)\|_{L^2}^{p}
		\int_0^T
		\|u(s)\|_{L^2}^{p-2}
		\left(1+\|u(s)\|_{L^2}^{2}\right)\,ds
		\right)^{1/2}
		\right]
		\nonumber\\
		&\quad\le
		\frac12\mE\left[
		\sup_{0\le s\le T}\|u(s)\|_{L^2}^{p}
		\right]
		+
		C_{p,c_B}\mE\left[
		\int_0^T
		\left(1+\|u(s)\|_{L^2}^{p}\right)\,ds
		\right].
	\end{align}
	Therefore, absorbing the first term on the right-hand side into the left-hand
	side yields
	\begin{align}
		\mE\Bigg[
		\frac12\sup_{0\le t\le T}\|u(t)\|_{L^2}^{p}
		&+
		p\int_0^T
		\|u(s)\|_{L^2}^{p-2}\|\nabla u(s)\|_{L^2}^{2}\,ds
		\nonumber\\
		&+
		\frac{3p}{4}\int_0^T
		\|u(s)\|_{L^2}^{p-2}\|u(s)\|_{L^4}^{4}\,ds
		\Bigg]
		\nonumber\\
		&\le
		\mE\left[\|u_0\|_{L^2}^{p}\right]
		+
		C_{p,\chi,c_B}
		\int_0^T
		\left(
		1+
		\mE\left[
		\sup_{0\le r\le s}\|u(r)\|_{L^2}^{p}
		\right]
		\right)\,ds .
	\end{align}
	The proof is complete by using the Gronwall inequality.

\bigskip
(b) To prove \eqref{eq2.7}, applying It\^o's formula to
\[
\Phi(u(t))=\|u(t)\|_{L^4}^{4}=\int_D |u(t)|^4\,dx,
\]
we obtain
\begin{align}
	\|u(t)\|_{L^4}^{4}
	&=
	\|u_0\|_{L^4}^{4}
	+
	4\int_0^t
	\bigl(u^3(s),
	\Delta u(s)
	-\chi\nabla\cdot(u(s)\nabla v(s))
	+g(u(s))\bigr)\,ds
	\nonumber\\
	&\quad
	+
	4\int_0^t
	\bigl(u^3(s),B(u(s))\bigr)\,dW(s)
	\nonumber\\
	&\quad
	+
	6\int_0^t
	\bigl(u^2(s)B(u(s)),B(u(s))\bigr)\,ds,
\end{align}
which, together with integration by parts implies that
\begin{align}\label{eq2.27}
	\|u(t)\|_{L^4}^{4} + 12 \int_{0}^t \|u(s)\nab u(s)\|^2_{L^2}\, ds + 4\int_{0}^t \|u(s)\|^6_{L^6}\, ds
	&=
	\|u_0\|_{L^4}^{4}
	+
	4\int_0^t \|u(s)\|^4_{L^4}\,ds \\\nonumber
	&\quad + 12\chi\int_{0}^t \bigl(u^3(s)\nab u(s), \nab v(s)\bigr) \, ds
	\\\nonumber
	&\quad
	+
	4\int_0^t
	\bigl(u^3(s),B(u(s))\bigr)\,dW(s)
\\\nonumber
	&\quad
	+
	6\int_0^t
	\bigl(u^2(s)B(u(s)),B(u(s))\bigr)\,ds.
\end{align}

We next estimate the chemotaxis nonlinear term. Using integration by parts and the
homogeneous Neumann boundary condition, we obtain
\begin{align*}
	12\chi\bigl(u^3\nabla u,\nabla v\bigr)
	&=
	3\chi\bigl(\nabla(u^4),\nabla v\bigr)  \\
	&=
	-3\chi\bigl(\Delta v,u^4\bigr).
\end{align*}
Therefore, using H\"older's inequality and the elliptic estimate from \eqref{elliptic_estimate}, which implies that $\|\Delta v\|_{L^4}\le C\|u\|_{L^4},$ we obtain
\begin{align*}
	12\chi\bigl(u^3\nabla u,\nabla v\bigr)
	&\le
	3\chi\|\Delta v\|_{L^4}\|u\|_{L^4}\|u^3\|_{L^2} \\
	&=
	3\chi\|\Delta v\|_{L^4}\|u\|_{L^4}\|u\|_{L^6}^{3}\\
	&\le
	C\chi \|u\|_{L^4}^{2}\|u\|_{L^6}^{3}\\
	&\le
	\frac14\|u\|_{L^6}^{6}
	+
	C\|u\|_{L^4}^{4}.
\end{align*}

Next, using the assumption \eqref{Assump_LinearGrowth} with $m = 0,1$, we estimate the noise term as follows.
\begin{align*}
	6\int_0^t
	\bigl(u^2(s)B(u(s)),B(u(s))\bigr)\,ds &\le 6\int_{0}^t\|u(s)\|^2_{L^4}\|B(u(s))\|^2_{L^4}\, ds\\\nonumber
	&\le 6C\int_{0}^t\|u(s)\|^2_{L^4}\|B(u(s))\|_{L^2}\|\nab B(u(s))\|_{L^2}\, ds\\\nonumber
	&\le 6Cc^2_{B}\int_{0}^t\|u(s)\|^2_{L^4}\|u(s)\|_{L^2}\|\nab u(s)\|_{L^2}\, ds\\\nonumber
	&\le 3Cc^2_{B}\int_{0}^t\|u(s)\|^4_{L^4}\|u(s)\|^2_{L^2}\, ds + 3Cc^2_{B}\int_{0}^t\|\nab u(s)\|^2_{L^2}\, ds.
\end{align*}

Substituting the above estimates into \eqref{eq2.27}, we obtain
\begin{align}\label{eq2.28}
	\|u(t)\|_{L^4}^{4}
	&+
	12 \int_0^t \|u(s)\nabla u(s)\|_{L^2}^{2}\,ds
	+
	\frac{15}{4}\int_0^t \|u(s)\|_{L^6}^{6}\,ds
	\nonumber\\
	&\le
	\|u_0\|_{L^4}^{4}
	+
	C\int_0^t \|u(s)\|_{L^4}^{4}\,ds
	+
	C c_B^2\int_0^t
	\|u(s)\|_{L^4}^{4}\|u(s)\|_{L^2}^{2}\,ds
	\nonumber\\
	&\quad
	+
	C c_B^2\int_0^t
	\|\nabla u(s)\|_{L^2}^{2}\,ds
	+
	4\int_0^t
	\bigl(u^3(s),B(u(s))\bigr)\,dW(s).
\end{align}

Taking the supremum over $t\in[0,T]$ and then taking expectation to \eqref{eq2.28}, we have
\begin{align}\label{eq2.29}
	&\mE\Bigg[
	\sup_{0\le t\le T}\|u(t)\|_{L^4}^{4}
	+
	12 \int_0^T\|u(s)\nabla u(s)\|_{L^2}^{2}\,ds
	+
	\frac{15}{4}\int_0^T\|u(s)\|_{L^6}^{6}\,ds
	\Bigg]
	\nonumber\\
	&\quad\le
	\mE\left[\|u_0\|_{L^4}^{4}\right]
	+
	C\mE\left[
	\int_0^T
	\|u(s)\|_{L^4}^{4}
	\left(1+\|u(s)\|_{L^2}^{2}\right)\,ds
	\right]
	\nonumber\\
	&\qquad
	+
	Cc_B^2
	\mE\left[
	\int_0^T
	\|\nabla u(s)\|_{L^2}^{2}\,ds
	\right]
	\nonumber\\
	&\qquad
	+
	4\mE\left[
	\sup_{0\le t\le T}
	\left|
	\int_0^t
	\bigl(u^3(s),B(u(s))\bigr)\,dW(s)
	\right|
	\right].
\end{align}

Now, we control the last term on the right-hand side of \eqref{eq2.29}.
By using the Burkholder-Davis-Gundy inequality, the Cauchy-Schwarz inequality,
and Assumption~\eqref{Assump_LinearGrowth} with $m=0$, we have
\begin{align}
	&4\mE\left[
	\sup_{0\le t\le T}
	\left|
	\int_0^t
	\bigl(u^3(s),B(u(s))\bigr)\,dW(s)
	\right|
	\right]
\\\nonumber
	&\quad\le
	C\mE\left[
	\left(
	\int_0^T
	\|u(s)\|_{L^6}^{6}\|B(u(s))\|_{L^2}^{2}\,ds
	\right)^{1/2}
	\right]
\\\nonumber
	&\quad\le
	C\mE\left[
	\left(
	\int_0^T
	\|u(s)\|_{L^6}^{6}
	\left(1+\|u(s)\|_{L^2}^{2}\right)\,ds
	\right)^{1/2}
	\right]\\\nonumber
	&\quad\le
	C\mE\left[
	\left(
	1+\sup_{0\le s\le T}\|u(s)\|_{L^2}^{2}
	\right)^{1/2}
	\left(
	\int_0^T\|u(s)\|_{L^6}^{6}\,ds
	\right)^{1/2}
	\right]
\\\nonumber
	&\quad\le
	\frac14
	\mE\left[
	\int_0^T\|u(s)\|_{L^6}^{6}\,ds
	\right]
	+
	C\mE\left[
	1+\sup_{0\le s\le T}\|u(s)\|_{L^2}^{2}
	\right].
\end{align}

Substituting this estimate into \eqref{eq2.29}, and absorbing the like-terms
into the left-hand side, we obtain
\begin{align}\label{eq2.31}
	&\mE\Bigg[
	\sup_{0\le t\le T}\|u(t)\|_{L^4}^{4}
	+
	12 \int_0^T\|u(s)\nabla u(s)\|_{L^2}^{2}\,ds
	+
	\frac{7}{2}\int_0^T\|u(s)\|_{L^6}^{6}\,ds
	\Bigg]
	\nonumber\\
	&\quad\le
	\mE\left[\|u_0\|_{L^4}^{4}\right]
	+
	C\mE\left[
	\int_0^T
	\|u(s)\|_{L^4}^{4}
	\left(1+\|u(s)\|_{L^2}^{2}\right)\,ds
	\right]
	\nonumber\\
	&\qquad
	+
	Cc_B^2
	\mE\left[
	\int_0^T
	\|\nabla u(s)\|_{L^2}^{2}\,ds
	\right]
	+
	C\mE\left[
	1+\sup_{0\le s\le T}\|u(s)\|_{L^2}^{2}
	\right].
\end{align}

The proof is complete by using Part (a) to control the right-hand side of \eqref{eq2.31}.

	\end{proof}
	
	\subsection{Variational mixed formulation}
	To control the second order nonlinear term of \eqref{eq_2.1a}, we follow the approach of \cite{vo2025analysis} by introducing the auxiliary variable $\vsigma = \nab v \in \vH_{\vsigma}^1(D)$ in \eqref{eq_2.1b}, we obtain
	\begin{align}\label{eq2.1}
		\bigl(\vsigma,\vvarphi\bigr) + \bigl(v,\nab\cdot \vvarphi\bigr) &= 0,\\
		\label{eq2.2}	\bigl(\nab\cdot\vsigma, \psi\bigr) + (u,\psi)&= \bigl(v,\psi\bigr),
	\end{align}
	for $\vvarphi\in \vH_{\vsigma}^1(D)$, and $\psi\in L^2(D)$. Next, choosing $\psi = \nab\cdot\vvarphi$ in $\eqref{eq2.2}$ and replacing the result into $\eqref{eq2.1}$, and also adding a zero term $\bigl(rot\, \vsigma,\, rot \,\vvarphi \bigr)$ (this is zero because of the fact that $rot \,\vsigma = 0$) to $\eqref{eq2.1}$, we obtain the following mixed variational formulation: Find $\bigl(u,\vsigma, v\bigr) \in  L^2(\Ome; C(0,T; L^2(D)))\cap L^2(\Ome; L^2(0,T; H^1(D))) \times L^2(\Ome; C(0,T; \vH_{\vsigma}^1(D))) \times L^2(\Ome; C(0,T; L^2(D)))$ such that $\mP$-a.s.
	\begin{subequations}\label{Weak_formulation}
		\begin{align}
			\label{eq_sigma}	&\bigl(\vsigma, \vvarphi\bigr) + \bigl(\nab\cdot \vsigma, \nab\cdot\vvarphi\bigr) + \bigl(rot \,\vsigma, rot \,\vvarphi\bigr) = -\bigl(u,\nab\cdot\vvarphi\bigr)\\
			\label{eq_u}	&\bigl(u(t), \varphi\bigr) +  \int_0^t \bigl(\nab u, \nab\varphi\bigr)  \, ds = \bigl(u_0, \varphi\bigr)+\chi \int_0^t \bigl(u\vsigma,\nab \varphi\bigr)\, ds + \int_{0}^t \bigl(g(u), \varphi\bigr)\, ds \\\nonumber&\qquad\qquad\qquad\qquad\qquad\qquad\qquad+ \label{eq_c}\Bigl(\int_0^t B(u)dW(s),\varphi\Bigr),\\
			&\bigl(v,\psi\bigr)=	\bigl(\nab\cdot\vsigma, \psi\bigr) + (u,\psi),
		\end{align}
	\end{subequations}
	for all $\bigl(\varphi,\vvarphi, \psi\bigr) \in {H}^1(D) \times \vH_{\vsigma}^1(D) \times L^2(D)$.

		\begin{lemma}\label{Lemma_Holder}
	Let $(u,\vsigma,v)$ be the solution of \eqref{Weak_formulation}, where
	$\vsigma=\nabla v$. Assume that $B$ satisfies the assumption \eqref{Assump_LinearGrowth}. Then the following estimates hold.
	
	\begin{enumerate}
		\item[(a)] If $u \in L^{2p}(\Omega;C([0,T]; H^1(D)))\cap L^{6p}(\Ome; L^{6p}(0,T ; H^1(D)))$ for all $p\geq 1$, then for all
		$0\le s\le t\le T$,
		\begin{align}
			\mE\left[\|u(t)-u(s)\|_{L^2}^{2p}\right]
			&\le C_{Holder,1}|t-s|^{p}, \label{eq_holder_L2}\\
			\mE\left[\|\vsigma(t)-\vsigma(s)\|_{H^1}^{2p}\right]
			&\le C_{Holder,1}\,|t-s|^{p}, \label{eq_holder_sigma}
		\end{align}
		where 
		\begin{align}
			C_{Holder,1}&:= \left(\sup_{s\in[0,T]}\mE\left[\|u(s)\|^{2p}_{H^1}\right] + \int_0^T \mE\left[\|u\|^{2p}_{L^4}
			\|\nabla u\|^{2p}_{L^2}+
			\|u\|_{L^4}^{4p}\right]\, dr \right.
			\\\nonumber
			&\qquad\left.+\int_0^T\mE\left[(c_B+1)\left\|u(r)\right\|^{2p}_{L^2} + \|u(r)\|^{6p}_{L^6}\right]\, dr \right).
		\end{align}
		
		\item[(b)] If $u\in L^2(\Omega;L^{\infty}(0,T;H^2(D)))\cap L^6(\Ome;L^{\infty}(0,T;H^1(D)))$, then for all
		$0\le s\le t\le T$,
		\begin{align}
			\mE\left[\|\nabla(u(t)-u(s))\|_{L^2}^{2}\right]
			&\le C_{Holder,2}\,|t-s| \label{eq_holder_H1_2},
		\end{align}
		where 
		\begin{align*}
			C_{Holder,2} &:= \left(4\mE\left[\sup_{s\in[0,T]}\|\Delta u(s)\|^2_{L^2}\right] + \mE\left[4C\chi^2\sup_{r\in[0,T]} \|u(r)\|^2_{L^4} (\|\nab u(r)\|^2_{L^2} + \|u(r)\|^2_{L^4})\right] \right.\\\nonumber
			&\qquad\left.+ \mE\left[8\sup_{r\in[0,T]}(\|u(r)\|^2_{L^2} + \|u(r)\|^6_{L^6})\right] + \mE\left[c_B \sup_{r\in[0,T]}\|\nab u(r)\|^2_{L^2}\right] \right)
		\end{align*}
	\end{enumerate}
\end{lemma}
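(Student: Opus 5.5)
For part (a), I will start from the integral identity for the increment. Subtracting \eqref{eq_u} at times $s$ and $t$ gives, in the weak sense,
\[
u(t)-u(s)=\int_s^t\bigl[\Delta u-\chi\nabla\cdot(u\vsigma)+g(u)\bigr]\,dr+\int_s^t B(u)\,dW(r).
\]
I test this identity with $\varphi=u(t)-u(s)$. The deterministic terms are handled after integration by parts:
\[
\Bigl|\int_s^t (\nabla u,\nabla(u(t)-u(s)))\,dr\Bigr| \le \|\nabla(u(t)-u(s))\|_{L^2}\int_s^t\|\nabla u\|_{L^2}\,dr .
\]
The chemotaxis term is treated the same way, using
\[
\|u\vsigma\|_{L^2}\le \|u\|_{L^4}\|\vsigma\|_{L^4}\le C\|u\|_{L^4}\|u\|_{L^2}^{1/2}\|u\|_{H^1}^{1/2}.
\]
This relies on \eqref{elliptic_estimate}, Ladyzhenskaya, and $\|\vsigma\|_{H^1}\le C\|u\|_{L^2}$.

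The logistic term is bounded directly by $\|g(u)\|_{L^2}\le \|u\|_{L^2}+\|u\|_{L^6}^3$. Since $\|\nabla(u(t)-u(s))\|_{L^2}\le 2\sup_r\|u(r)\|_{H^1}$, after raising to the power $p$ and applying Hölder in time, each drift contribution is bounded by $|t-s|^{p}$ times the moments listed in $C_{Holder,1}$. Hölder in time on an interval of length $|t-s|$ produces exactly the factor $|t-s|^{2p-1}\int_s^t(\cdot)^{2p}$, or its square-root analogue.

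For the noise term I will not pair it with the increment. Instead I bound $\mE\|\int_s^t B(u)\,dW\|_{L^2}^{2p}$ directly by the Burkholder--Davis--Gundy inequality together with \eqref{Assump_LinearGrowth}:
\[
\mE\Bigl[\Bigl\|\int_s^t B(u)\,dW\Bigr\|_{L^2}^{2p}\Bigr]\le C_p\,\mE\Bigl[\Bigl(\int_s^t c_B^2(1+\|u\|_{L^2})^2\,dr\Bigr)^{p}\Bigr]\le C|t-s|^{p}\sup_r\mE\bigl[(1+\|u\|_{L^2})^{2p}\bigr].
\]
This term produces the $|t-s|^p$ rate, which is the natural $1/2$-Hölder rate. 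Combining the pieces with Young's inequality, or equivalently estimating the $L^2$ norm of the integral identity in a dual or $H^{-1}$–$H^1$ pairing, yields \eqref{eq_holder_L2}.

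The step I expect to need the most care is ensuring that the drift terms are genuinely of order $|t-s|^p$ and not of lower order. I will handle this by keeping one factor of $\int_s^t$ and using the $H^1$ moment assumptions so that the drift is $O(|t-s|^{2p})$ or better. For \eqref{eq_holder_sigma}, I use linearity of \eqref{eq_sigma}: the difference $\vsigma(t)-\vsigma(s)$ solves the same problem with data $u(t)-u(s)$. Taking $\vvarphi=\vsigma(t)-\vsigma(s)$ and the equivalent $H^1$ norm gives $\|\vsigma(t)-\vsigma(s)\|_{H^1}\le\|u(t)-u(s)\|_{L^2}$, so \eqref{eq_holder_sigma} follows from \eqref{eq_holder_L2}.

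For part (b), I write the identity for $\nabla(u(t)-u(s))$ by testing the strong form with $-\Delta(u(t)-u(s))$. Equivalently, I apply $\nabla$ and use $\|\nabla w\|_{L^2}^2=-(\Delta w,w)$ together with the Neumann condition. The drift terms are then bounded by
\[
|t-s|^2\sup_r\Bigl(\|\Delta u\|_{L^2}+\chi\|\nabla\cdot(u\vsigma)\|_{L^2}+\|g(u)\|_{L^2}\Bigr)^2 .
\]
Expanding $\nabla\cdot(u\vsigma)=\nabla u\cdot\vsigma+u(u-v)$ and using $\|\vsigma\|_{L^4}\le C\|u\|_{L^2}$ gives the $\chi^2$ term of $C_{Holder,2}$. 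For the noise contribution, the Itô isometry with \eqref{Assump_LinearGrowth} for $m=1$ gives $\mE\|\nabla\int_s^t B(u)\,dW\|_{L^2}^2\le c_B|t-s|\,\mE\sup\|u\|_{H^1}^2$. This term again dictates the rate $|t-s|$, since $|t-s|^2\le T|t-s|$ for the drift, which yields \eqref{eq_holder_H1_2}.
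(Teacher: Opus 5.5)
Your argument is correct in outline, but it follows a different route from the paper's.

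\textbf{The paper's route.} For (a) the paper never tests the variational identity. It uses the mild formulation on $[s,t]$ and splits $u(t)-u(s)$ into four pieces: $(e^{-(t-s)A}-I)u(s)$, the chemotaxis convolution, the logistic convolution, and the stochastic convolution. The first piece is handled by the smoothing bound $\|A^{-1/2}(e^{-(t-s)A}-I)\|_{\mathcal L(L^2)}\le C(t-s)^{1/2}$. The two drift convolutions are handled by H\"older in time, which is where the factor $|t-s|^{2p-1}\int_s^t$ comes from, together with $\|\nabla\cdot(u\nabla v)\|_{L^2}\le C\|u\|_{L^4}(\|\nabla u\|_{L^2}+\|u\|_{L^4})$. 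The stochastic convolution is handled by BDG. For (b) it applies It\^o's formula to $r\mapsto\|\nabla(u(r)-u(s))\|_{L^2}^2$. The dissipation $2\int_s^t\|\Delta(u(r)-u(s))\|_{L^2}^2\,dr$ then sits on the left and absorbs the Young terms, and the martingale term has zero mean.

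\textbf{What your route buys and costs.} Your static pairing with $u(t)-u(s)$, resp.\ $-\Delta(u(t)-u(s))$, avoids semigroup theory, the mild/weak equivalence, and an It\^o formula for the $H^1$ seminorm. In (a) it also needs only $u\vsigma\in L^2$ rather than $\nabla\cdot(u\nabla v)\in L^2$. The price is that each drift term is multiplied by an endpoint quantity that you must bound by a pathwise supremum. Your constant therefore involves $\mE[\sup_r\|u(r)\|_{H^1}^{2p}]$ and products of such suprema, not $\sup_s\mE[\|u(s)\|_{H^1}^{2p}]$ and the time-integrated moments in $C_{Holder,1}$. This is finite under the hypothesis $u\in L^{2p}(\Omega;C([0,T];H^1(D)))$ for all $p$, but it is not the stated constant. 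Your deduction of \eqref{eq_holder_sigma} from linearity of \eqref{eq_sigma}, namely $\|\vsigma(t)-\vsigma(s)\|_{H^1}\le\|u(t)-u(s)\|_{L^2}$, is correct and fills in a step the paper leaves implicit.

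\textbf{Size claims to correct.} Several of your intermediate bounds are wrong, although the conclusions survive.

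In (a), the Laplacian term paired with $\nabla(u(t)-u(s))$ gives only $2|t-s|\sup_r\|\nabla u(r)\|_{L^2}^2$, i.e.\ $O(|t-s|^p)$ after the $p$th power. You cannot get $O(|t-s|^{2p})$ from $H^1$ regularity. The factor $|t-s|^{2p-1}\int_s^t$ belongs to the direct-norm (mild) estimate, not to your pairing.

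In (b), your bound $|t-s|^2\sup_r(\cdots)^2$ for the drift is false. Write $F=\Delta u-\chi\nabla\cdot(u\vsigma)+g(u)$. Bounding $\|\nabla\int_s^tF\,dr\|_{L^2}$ would require $\nabla\Delta u$. In your pairing the drift actually contributes
$\bigl(\int_s^tF\,dr,-\Delta(u(t)-u(s))\bigr)\le 2|t-s|\sup_r\|F(r)\|_{L^2}\sup_r\|\Delta u(r)\|_{L^2}$.
There is no dissipative term available to absorb $\|\Delta(u(t)-u(s))\|_{L^2}^2$; that is exactly what the paper's It\^o formula provides. Since the target rate is $|t-s|$, this $O(|t-s|)$ bound, combined with Cauchy--Schwarz in $\Omega$, still suffices.

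Finally, $\nabla\cdot(u\vsigma)=\nabla u\cdot\vsigma+u(v-u)$, since $\nabla\cdot\vsigma=v-u$.
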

\begin{proof}
	(a) To prove \eqref{eq_holder_L2}, we recall the mild solution \eqref{mild_sol}
	\begin{align}\label{eq:mild}
		u(t)
		=& e^{-(t-s)A}u(s)
		-\chi\int_s^t e^{-(t-r)A}
		\nabla\cdot\bigl(u(r)\nabla v(r)\bigr)\,dr\\\nonumber 
		&+\int_s^t e^{-(t-r)A}g(u(r))\,dr
		+\int_s^t e^{-(t-r)A}B(u(r))\,dW(r),
		\qquad t\ge s,
	\end{align}
	which implies that
	\begin{align}
		u(t) - u(s) = I + II + III + IV,
	\end{align}
	where $I, II, III$ and $IV$ will be specified as we estimate them.
	
	First, we estimate $I$ as follow. Using the property 
	\begin{align}
		\mE[\|I\|^{2p}_{L^2}] &= \mE\left[\|(e^{-(t-s)A} - I)u(s)\|^{2p}_{L^2}\right]\\\nonumber
		&= \mE\left[\|A^{-1/2}(e^{-(t-s)A} - I) A^{1/2}u(s)\|^{2p}_{L^2}\right]\\\nonumber
		&\leq \mE\left[\|A^{-1/2}(e^{-(t-s)A} - I) \|^{2p}_{\mathcal{L}(L^2)}\|A^{1/2}u(s)\|^{2p}_{L^2}\right]\\\nonumber
		&\leq C(t-s)^p\mE\left[\|u(s)\|^{2p}_{H^1}\right].
	\end{align}
	
	Next, using the H\"older inequality, we estimate $II$ as follows: 
	\begin{align}\label{eq2.38}
		\mE\left[\|II\|^{2p}_{L^2}\right] &= \mE\left[\left\|\chi\int_s^t e^{-(t-r)A}
		\nabla\cdot\bigl(u(r)\nabla v(r)\bigr)\,dr\right\|^{2p}_{L^2}\right]\\\nonumber
		&\leq \chi^{2p} (t-s)^{2p-1} \int_s^t \mE\left[\|e^{-(t-r)A}
		\nabla\cdot\bigl(u(r)\nabla v(r)\bigr)\|^{2p}_{L^2}\right]\, dr\\\nonumber
		&\leq C (t-s)^{2p-1} \int_s^t \mE\left[\|
		\nabla\cdot\bigl(u(r)\nabla v(r)\bigr)\|^{2p}_{L^2}\right]\, dr.
	\end{align}
	
	Observe that
	
	\[
	\nabla\cdot(u\nabla v)
	=
	\nabla u\cdot\nabla v
	+
	u\Delta v.
	\]
	
	Hence,
	
	\[
	\begin{aligned}
		\|\nabla\cdot(u\nabla v)\|_{L^2}
		&\le
		\|\nabla u\cdot\nabla v\|_{L^2}
		+
		\|u\Delta v\|_{L^2}
		\\
		&\le
		\|\nabla u\|_{L^2}
		\|\nabla v\|_{L^\infty}
		+
		\|u\|_{L^4}
		\|\Delta v\|_{L^4}.
	\end{aligned}
	\]
	
	Using the elliptic estimate $\|\nabla v\|_{L^\infty}
	+
	\|\Delta v\|_{L^4}
	\le
	C\|u\|_{L^4},$
	we obtain
	\begin{align}\label{eq2.32}
	\|\nabla\cdot(u\nabla v)\|_{L^2}
	&\le
	C
	\|\nabla u\|_{L^2}
	\|u\|_{L^4}
	+
	C
	\|u\|_{L^4}^2
	\\\nonumber
	&=
	C
	\|u\|_{L^4}
	\left(
	\|\nabla u\|_{L^2}
	+
	\|u\|_{L^4}
	\right).
\end{align}
	
	With this, we update the right-hand side of \eqref{eq2.38} as follows.
	\begin{align}
		\mE\left[\|II\|^{2p}_{L^2}\right] &= \mE\left[\left\|\chi\int_s^t e^{-(t-r)A}
		\nabla\cdot\bigl(u(r)\nabla v(r)\bigr)\,dr\right\|^{2p}_{L^2}\right]\\\nonumber
		&\leq \chi^{2p} (t-s)^{2p-1} \int_s^t \mE\left[\|e^{-(t-r)A}
		\nabla\cdot\bigl(u(r)\nabla v(r)\bigr)\|^{2p}_{L^2}\right]\, dr\\\nonumber
		&\leq C (t-s)^{2p-1} \int_s^t \mE\left[\|u\|^{2p}_{L^4}
		\left(
		\|\nabla u\|_{L^2}
		+
		\|u\|_{L^4}
		\right)^{2p}\right]\, dr\\\nonumber
		&\leq C (t-s)^{2p-1} \int_s^t \mE\left[\|u\|^{2p}_{L^4}
		\|\nabla u\|^{2p}_{L^2}
		+
		\|u\|_{L^4}^{4p}\right]\, dr.
	\end{align}
	
	Now, we turn to estimate $III$. Using the H\"older inequality, we obtain
	\begin{align}
		\mE\left[\|III\|^{2p}_{L^2}\right] &= \mE\left[\left\|\int_s^t e^{-(t-r)A}g(u(r))\, dr\right\|^{2p}_{L^2}\right]\\\nonumber
		&\leq C(t-s)^{2p-1}\int_s^t \mE\left[\left\|g(u(r))\right\|^{2p}_{L^2}\right]\, dr\\\nonumber
		&\leq C(t-s)^{2p-1}\int_s^t \mE\left[\left\|u(r)\right\|^{2p}_{L^2} + \|u(r)\|^{6p}_{L^6}\right]\, dr.
	\end{align}
	
	Using the Burkholder-Davis-Gundy inequality and the H\"older inequality and then the semigroup property and the assumption \eqref{Assump_LinearGrowth} with $m =0$, we estimate $IV$ as follows.
	\begin{align}
		\mE\left[\|IV\|^{2p}_{L^2}\right] &\leq \mE\left[\left\|\int_s^t e^{-(t-r)A}B(u(r))\,dW(r)\right\|^{2p}_{L^2}\right]\\\nonumber
		&\leq C\mE\left[\left(\int_s^t \left\|e^{-(t-r)A}B(u(r))\right\|^{2}_{L^2}\,dr\right)^p\right]\\\nonumber
		&\leq C(t-s)^p\int_s^t \mE\left[\left\|e^{-(t-r)A}B(u(r))\right\|^{2p}_{L^2}\right]\,dr\\\nonumber
		&\leq Cc_B(t-s)^p\int_s^t \mE\left[\left\|u(r)\right\|^{2p}_{L^2}\right]\,dr.
	\end{align}
	
	Collecting all the estimates from $I, II, III$ and $IV$, we obtain
	\begin{align}
		\mE\left[\|u(t) - u(s)\|^{2p}_{L^2}\right] \leq &C(t-s)^p\left(\mE\left[\|u(s)\|^{2p}_{H^1}\right] + \int_s^t \mE\left[\|u\|^{2p}_{L^4}
		\|\nabla u\|^{2p}_{L^2}+
		\|u\|_{L^4}^{4p}\right]\, dr \right.
		\\\nonumber
		&\qquad\left.+\int_s^t\mE\left[\left\|u(r)\right\|^{2p}_{L^2} + \|u(r)\|^{6p}_{L^6}\right]\, dr + \int_s^t \mE\left[c_B\left\|u(r)\right\|^{2p}_{L^2}\right]\,dr\right)\\\nonumber
		\leq &C(t-s)^p\left(\sup_{s\in[0,T]}\mE\left[\|u(s)\|^{2p}_{H^1}\right] + \int_0^T \mE\left[\|u\|^{2p}_{L^4}
		\|\nabla u\|^{2p}_{L^2}+
		\|u\|_{L^4}^{4p}\right]\, dr \right.
		\\\nonumber
		&\qquad\left.+\int_0^T\mE\left[(c_B+1)\left\|u(r)\right\|^{2p}_{L^2} + \|u(r)\|^{6p}_{L^6}\right]\, dr \right).
	\end{align}

	(b) To prove \eqref{eq_holder_H1_2}, we apply the It\^o formula to $\Phi(u(t)) = \|\nab (u(t) - u(s))\|^2_{L^2}$ as follows: 
	\begin{align*}
		\|\nab(u(t) - u(s))\|^2_{L^2} &= -2\int_s^t \bigl(\Delta u(r), \Delta(u(r) - u(s))\bigr)\, dr \\\nonumber
		&\qquad- 2\chi\int_s^t \bigl(\nab\cdot(u(r)\nab v(r)), \Delta(u(r) - u(s))\bigr)\, dr\\\nonumber
		&\qquad + 2\int_s^t \bigl( g(u(r)), \Delta(u(r) - u(s))\bigr)\, dr\\\nonumber
		&\qquad + 2\int_s^t \bigl(\nab B(u(r)), \nab(u(r) - u(s))\bigr)\, dW(r)+ \int_s^t \|\nab B(u(r))\|^2_{L^2}\, dr,
	\end{align*}
	which implies that
	\begin{align}\label{eq3.43}
		\|\nab(u(t) - u(s))\|^2_{L^2} &+2\int_s^t \|\Delta(u(r) - u(s))\|^2_{L^2}\, dr\\\nonumber &= - 2\int_s^t \bigl(\Delta u(s), \Delta(u(r) - u(s))\bigr)\, dr \\\nonumber
		&\qquad- 2\chi\int_s^t \bigl(\nab\cdot(u(r)\nab v(r)), \Delta(u(r) - u(s))\bigr)\, dr\\\nonumber
		&\qquad + 2\int_s^t \bigl( g(u(r)), \Delta(u(r) - u(s))\bigr)\, dr\\\nonumber
		&\qquad + 2\int_s^t \bigl(\nab B(u(r)), \nab(u(r) - u(s))\bigr)\, dW(r)+ \int_s^t \|\nab B(u(r))\|^2_{L^2}\, dr\\\nonumber
		&:= T_1 + T_2 + T_3 + T_4 + T_5.
	\end{align}
	
	Now, we estimate $T_1,..., T_5$ as follows. First, using the Cauchy-Schwarz inequality, we obtain
	\begin{align*}
		T_1 &\leq \frac14\int_s^t \|\Delta(u(r) - u(s))\|^2_{L^2}\, dr + 4\|\Delta u(s)\|^2_{L^2} |t-s|.
	\end{align*} 
	
	Similarly, using the Cauchy-Schwarz inequality, we also get
	\begin{align*}
		T_2 &\leq \frac14\int_s^t \|\Delta(u(r) - u(s))\|^2_{L^2}\, dr + 4\chi^2\int_s^t \|\nab\cdot(u\nab v)\|^2_{L^2}\, dr\\\nonumber
		&\leq \frac14\int_s^t \|\Delta(u(r) - u(s))\|^2_{L^2}\, dr + 4C\chi^2\int_s^t \|u(r)\|^2_{L^4} (\|\nab u(r)\|^2_{L^2} + \|u(r)\|^2_{L^4})\, dr\\\nonumber
		&\leq \frac14\int_s^t \|\Delta(u(r) - u(s))\|^2_{L^2}\, dr + 4C\chi^2\sup_{r\in[0,T]} \|u(r)\|^2_{L^4} (\|\nab u(r)\|^2_{L^2} + \|u(r)\|^2_{L^4}) |t-s|,
	\end{align*}
	where the second inequality of $T_2$ is obtained by using the estimate from \eqref{eq2.32}. 
	
	Also, using the definition of $g$ and the Cauchy-Schwarz inequality we get
	\begin{align}
		T_3 &\leq \frac14 \int_s^t \|\Delta(u(r) - u(s))\|^2_{L^2}\, dr + 8\sup_{r\in[0,T]}(\|u(r)\|^2_{L^2} + \|u(r)\|^6_{L^6})|t-s|.
	\end{align}
	
	Since $T_4$ is an It\^o integral, its expectation equals $0$. Additionally, using the assumption of \eqref{Assump_LinearGrowth} with $m=1$, we get
	\begin{align*}
	T_5 &\leq c_B\int_s^t \|\nab u(r)\|^2_{L^2}\, dr \leq c_B \sup_{r\in[0,T]}\|\nab u(r)\|^2_{L^2} |t-s|.
	\end{align*}
	
	Collecting all the estimates from $T_1, T_2, ..., T_5$ and then substituting them into \eqref{eq3.43} with the expectation, we obtain
	\begin{align*}
			&\mE\left[\|\nab(u(t) - u(s))\|^2_{L^2} + \int_s^t \|\Delta(u(r) - u(s))\|^2_{L^2}\, dr\right]\\\nonumber 
			&\leq \left(4\mE\left[\sup_{s\in[0,T]}\|\Delta u(s)\|^2_{L^2}\right] + \mE\left[4C\chi^2\sup_{r\in[0,T]} \|u(r)\|^2_{L^4} (\|\nab u(r)\|^2_{L^2} + \|u(r)\|^2_{L^4})\right] \right.\\\nonumber
			&\qquad\left.+ \mE\left[8\sup_{r\in[0,T]}(\|u(r)\|^2_{L^2} + \|u(r)\|^6_{L^6})\right] + \mE\left[c_B \sup_{r\in[0,T]}\|\nab u(r)\|^2_{L^2}\right] \right)|t-s|.
	\end{align*}
	
	The proof is complete.
	
\end{proof}

	\section{Fully discrete splitting mixed finite element method}\label{sec4}
	
\subsection{Formulation of the finite element method}

In this section, we present the fully discrete splitting mixed finite element method for the mixed formulation \eqref{Weak_formulation}. The temporal discretization is based on the implicit Euler scheme, while the spatial discretization is carried out using a splitting mixed finite element method.

Let $k=T/M$ be the uniform time step associated with the partition $\{t_m\}_{m=0}^{M}$ of the interval $[0,T]$. Furthermore, let $\{\mathcal{T}_h\}_{h>0}$ be a family of quasi-uniform triangulations of $D\subset\mathbb{R}^2$ with mesh size $0<h\ll1$. We consider the finite element spaces
\[
X_u^h\times X_{\vsigma}^h\times X_v^h
\subset
H^1(D)\times\mathbf{H}^1(D)\times H^1(D),
\]
consisting of continuous piecewise polynomial functions of degrees at most $r_1$, $r_2$, and $r_3$, respectively, where $r_1,r_2,r_3\ge1$.

Unlike standard mixed finite element methods, the proposed splitting formulation does not require the Ladyzhenskaya--Babu\v{s}ka--Brezzi (LBB) stability condition. Consequently, the finite element spaces can be chosen independently. Throughout this paper, we employ continuous piecewise linear ($P_1$) finite element spaces for all unknowns. Although the analysis is presented only for linear finite element spaces, it extends in a straightforward manner to higher-order polynomial finite element spaces.

Next, we introduce the projection operators
\begin{align}
	&\mathcal{P}_u:L^2(D)\rightarrow X_u^h,\qquad
	\mathcal{P}_{\vsigma}:\mathbf{H}^1(D)\rightarrow X_{\vsigma}^h,\nonumber\\
	&\mathcal{P}_v:L^2(D)\rightarrow X_v^h,
\end{align}
defined by
\begin{align}
	\label{Pu}
	(v-\mathcal{P}_uv,\varphi_h)&=0,
	\qquad\forall\,\varphi_h\in X_u^h,\\
	\label{Ps}
	(\mathbf{v}-\mathcal{P}_{\vsigma}\mathbf{v},\vvarphi_h)
	+(\nabla\cdot(\mathbf{v}-\mathcal{P}_{\vsigma}\mathbf{v}),\nabla\cdot\vvarphi_h)
	+(rot(\mathbf{v}-\mathcal{P}_{\vsigma}\mathbf{v}),rot\,\vvarphi_h)
	&=0,
	\qquad\forall\,\vvarphi_h\in X_{\vsigma}^h,\\
	\label{Pc}
	(v-\mathcal{P}_vv,\psi_h)&=0,
	\qquad\forall\,\psi_h\in X_v^h.
\end{align}

The above projection operators satisfy the standard interpolation estimates
\begin{align}\label{ineq_interpolation}
	\|v-\mathcal{P}_uv\|_{L^2}+h\|v-\mathcal{P}_uv\|_{H^1}
	&\le Ch^2\|v\|_{H^2},
	\qquad \forall\,v\in H^2(D),\nonumber\\
	\|\vvarphi-\mathcal{P}_{\vsigma}\vvarphi\|_{L^2}
	+h\|\vvarphi-\mathcal{P}_{\vsigma}\vvarphi\|_{\mathbf{H}^1}
	&\le Ch^2\|\vvarphi\|_{\mathbf{H}^2},
	\qquad \forall\,\vvarphi\in\mathbf{H}^2(D),\nonumber\\
	\|v-\mathcal{P}_vv\|_{L^2}
	+h\|v-\mathcal{P}_vv\|_{H^1}
	&\le Ch^2\|v\|_{H^2},
	\qquad \forall\,v\in H^2(D).
\end{align}

We are now ready to present the fully discrete splitting mixed finite element method for \eqref{Weak_formulation}.

\bigskip

\noindent
{\bf Main Algorithm.}
Let $u_h^0=\mathcal{P}_u u_0$ and $\vsigma_h^0=\mathcal{P}_{\vsigma}\vsigma_0$. For each time level $m=0,1,\ldots,M-1$, find $\bigl(u_h^{m+1},\vsigma_h^{m+1},v_h^{m+1}\bigr)\in X_u^h\times X_{\vsigma}^h\times X_v^h$ such that, $\mathbb{P}$-almost surely,
\begin{align}
	\label{eq_discrete_sigma}
	&(\vsigma_h^{m+1},\vvarphi_h)
	+(\nabla\cdot\vsigma_h^{m+1},\nabla\cdot\vvarphi_h)
	+(rot\,\vsigma_h^{m+1},rot\,\vvarphi_h)
	=-(u_h^m,\nabla\cdot\vvarphi_h),\\
	\label{eq_discrete_u}
	&(u_h^{m+1}-u_h^m,\varphi_h)
	+k(\nabla u_h^{m+1},\nabla\varphi_h)
	=k\chi(u_h^{m+1}\vsigma_h^{m+1},\nabla\varphi_h)
	+k(g(u_h^{m+1}),\varphi_h)\\\nonumber
	&\qquad\qquad\qquad\qquad\qquad\qquad\qquad\qquad\qquad+(B(u_h^m)\Delta W_m,\varphi_h),\\
	\label{eq_discrete_c}
	&(v_h^{m+1},\psi_h)
	=(\nabla\cdot\vsigma_h^{m+1},\psi_h)
	+(u_h^{m+1},\psi_h),
\end{align}
for all $(\varphi_h,\vvarphi_h,\psi_h)\in X_u^h\times X_{\vsigma}^h\times X_v^h$, where $\Delta W_m=W(t_{m+1})-W(t_m)\sim\mathcal{N}(0,k)$.

\medskip

The proposed scheme naturally decouples the computation at each time step into three successive subproblems. First, the auxiliary variable $\vsigma_h^{m+1}$ is computed from \eqref{eq_discrete_sigma} using the previously computed solution $u_h^m$. Next, the cell density $u_h^{m+1}$ is obtained by solving \eqref{eq_discrete_u}. Finally, the chemical concentration $v_h^{m+1}$ is recovered from the linear relation \eqref{eq_discrete_c}. Consequently, the original coupled stochastic Keller--Segel system is replaced by a sequence of simpler subproblems, eliminating the need to solve a fully coupled mixed system at each time step while retaining a direct approximation of the chemical gradient. This decoupled structure significantly improves the computational efficiency of the method, particularly for large-scale Monte Carlo simulations that require the computation of a large number of independent sample paths.

	Since we choose $u^0_h = \mathcal{P}_u u_0$ and $\vsigma_h^0 = \cP_{\vsigma} \vsigma_0$ and \eqref{ineq_interpolation}, without loss of generality, we assume that $u^0_h = u_0$ and $\vsigma_h^0  = \vsigma_0$ in all the proofs of the stability and error estimates of the Main Algorithm.

	Now, we present the stability estimates of the Main Algorithm. 
	
	\smallskip
	
	\begin{lemma}\label{lemma_fem_stability} Let $(u^m_h,\vsigma_h^m,v^m_h)$ be the solution of the Main Algorithm. Assume that $u_0 \in L^2(\Ome; L^2(D))$. Then, there exist a constant $\tilde{C}>0$ and $k_0>0$ such that for all $k\in(0,k_0)$
		\begin{enumerate}
			\item[(a)] $\displaystyle \mE\left[\max_{1\leq m \leq M}\|u_h^m\|^2_{L^2} +  k\sum_{m=1}^{M}\|\nab u_h^{m}\|^2_{L^2}\right] \leq \tilde{C}$,
			\item[(b)] $\displaystyle \mE\left[\max_{1\leq m \leq M}\|\vsigma_h^m\|^2_{H^1}\right] \leq \tilde{C}$,
			\item[(c)] $\displaystyle \mE\left[\max_{1\leq m \leq M} \|v_h^m\|^2_{L^2}\right]  \leq \tilde{C}$.
		\end{enumerate}
	\end{lemma}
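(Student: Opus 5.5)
The plan is a discrete energy argument for $u_h$, in which the $\vsigma_h$- and $v_h$-estimates (b) and (c) are read off from the $u_h$-estimate (a) through the decoupled structure of the Main Algorithm.

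\textbf{Step 1: the elliptic substeps.} Testing \eqref{eq_discrete_sigma} with $\vvarphi_h=\vsigma_h^{m+1}$ gives
\begin{align*}
\|\vsigma_h^{m+1}\|_{H^1}^2=-(u_h^m,\nabla\cdot\vsigma_h^{m+1})\le \|u_h^m\|_{L^2}\|\vsigma_h^{m+1}\|_{H^1}.
\end{align*}
Using the equivalent $\mathbf H^1$ norm recalled in Section~\ref{sec2}, this yields $\|\vsigma_h^{m+1}\|_{H^1}\le \|u_h^m\|_{L^2}$. Testing \eqref{eq_discrete_c} with $\psi_h=v_h^{m+1}$ gives $\|v_h^{m+1}\|_{L^2}\le \|\vsigma_h^{m+1}\|_{H^1}+\|u_h^{m+1}\|_{L^2}$. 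So (b) and (c) follow immediately from (a), and it remains to prove (a).

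\textbf{Step 2: the $u_h$-equation.} Take $\varphi_h=u_h^{m+1}$ in \eqref{eq_discrete_u} and use $2(a-b,a)=\|a\|^2-\|b\|^2+\|a-b\|^2$. The logistic term contributes $k\|u_h^{m+1}\|_{L^2}^2-k\|u_h^{m+1}\|_{L^4}^4$, so it supplies the dissipation $k\|u_h^{m+1}\|_{L^4}^4$ on the left. The noise term is split as
\begin{align*}
(B(u_h^m)\Delta W_m,u_h^{m+1})=(B(u_h^m)\Delta W_m,u_h^m)+(B(u_h^m)\Delta W_m,u_h^{m+1}-u_h^m).
\end{align*}
The first piece is a martingale increment, since $\Delta W_m$ is independent of $\mathcal F_{t_m}$. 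The second is bounded by $\tfrac12\|u_h^{m+1}-u_h^m\|_{L^2}^2+\tfrac12\|B(u_h^m)\|_{L^2}^2|\Delta W_m|^2$. Its expectation is controlled by $C k(1+\mE\|u_h^m\|_{L^2}^2)$, using \eqref{Assump_LinearGrowth} with $m=0$.

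\textbf{Step 3: the chemotaxis term (main obstacle).} The difficulty is $k\chi(u_h^{m+1}\vsigma_h^{m+1},\nabla u_h^{m+1})$. At the discrete level $\vsigma_h^{m+1}$ is not an exact gradient, so the continuous trick $-\frac12(\Delta v,u^2)$ is unavailable. I would estimate it by Hölder, Ladyzhenskaya and Step~1:
\begin{align*}
k\chi\|u_h^{m+1}\|_{L^4}\|\vsigma_h^{m+1}\|_{L^4}\|\nabla u_h^{m+1}\|_{L^2}
\le \frac k4\|\nabla u_h^{m+1}\|_{L^2}^2+\frac k2\|u_h^{m+1}\|_{L^4}^4+Ck\chi^4\|u_h^m\|_{L^2}^4 .
\end{align*}
Here $\|\vsigma_h^{m+1}\|_{L^4}\le C\|\vsigma_h^{m+1}\|_{H^1}$ by the embedding $H^1\hookrightarrow L^4$.

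The quartic lagged term $\|u_h^m\|_{L^2}^4$ is where the argument is most delicate, because a plain second-moment Gronwall does not close. I would first try to bound it instead by $\|u_h^m\|_{L^2}^4\le |D|\,\|u_h^m\|_{L^4}^4$, which is the logistic dissipation of the previous step. This closes only if the resulting constant is at most the dissipation constant, which is not guaranteed for general $\chi$. The fallback is a stopping-index localization: on the event $\{\max_{\ell\le m}\|u_h^\ell\|_{L^2}\le R\}$ the quartic term becomes linear with constant $R^2$. The bound on this event would then be propagated, in the spirit of the localization technique \cite{carelli2012rates}, possibly combined with a higher-moment version of Steps~2--3 (testing with $\|u_h^{m+1}\|_{L^2}^{p-2}u_h^{m+1}$, mirroring Lemma~\ref{Stability_PDE}(a)).

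\textbf{Step 4: summation.} Finally, sum over $m$ and take $\max_m$ and expectation. The martingale sum is handled by the discrete Burkholder--Davis--Gundy inequality, giving $\tfrac12\mE\max_m\|u_h^m\|_{L^2}^2+Ck\sum_m\mE\|u_h^m\|_{L^2}^2$. The term $k\|u_h^{m+1}\|_{L^2}^2$ on the right is absorbed for $k<k_0$. The discrete Gronwall inequality then gives (a), and with Step~1 also (b) and (c).
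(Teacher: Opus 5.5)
\textbf{There is a genuine gap in your Step 3.} Steps 1, 2 and 4 are sound. Your use of the discrete Burkholder--Davis--Gundy inequality to put the maximum inside the expectation is in fact more careful than the paper, which takes expectations before maximizing.

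Step 3 rests on a false premise. The integration-by-parts trick does not need $\vsigma_h^{m+1}$ to be a gradient. It only needs its divergence and a vanishing normal trace, and that is exactly the property of $\vH^1_{\vsigma}$ you already rely on in Step~1 when you invoke the equivalent $H^1$ norm. This is what the paper does:
\begin{align*}
\chi k\bigl(u_h^{m+1}\vsigma_h^{m+1},\nab u_h^{m+1}\bigr)
=-\frac{\chi k}{2}\bigl((u_h^{m+1})^2,\nab\cdot\vsigma_h^{m+1}\bigr)
\le \frac{\chi k}{2}\|u_h^{m+1}\|_{L^4}^2\|u_h^m\|_{L^2}
\le \frac k4\|u_h^{m+1}\|_{L^4}^4+\frac{\chi^2k}{4}\|u_h^m\|_{L^2}^2 .
\end{align*}
Here $\|\nab\cdot\vsigma_h^{m+1}\|_{L^2}\le\|u_h^m\|_{L^2}$ follows from testing \eqref{eq_discrete_sigma} with $\vsigma_h^{m+1}$. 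The quartic factor now sits on the current index. It is absorbed by the logistic dissipation $k\|u_h^{m+1}\|_{L^4}^4$ of the same step with an arbitrarily small weight, while the lagged factor enters only quadratically. So the second-moment Gronwall argument closes for every $\chi>0$, and the gradient dissipation is not even needed.

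Your H\"older route pairs $\|\vsigma_h^{m+1}\|_{L^4}$ with $\|\nab u_h^{m+1}\|_{L^2}$. This forces the degree-four term $Ck\chi^4\|u_h^m\|_{L^2}^4$, and neither proposed remedy repairs it:
\begin{itemize}
\item Bounding it by $|D|\,\|u_h^m\|_{L^4}^4$ closes only under a smallness condition on $\chi$ relative to the Sobolev constant and $|D|$. Optimizing the Young weights does not remove this condition.
\item The localization fallback only yields bounds on events $\{\max_{\ell\le m}\|u_h^\ell\|_{L^2}\le R\}$. The lemma asserts an unconditional bound on $\mE[\max_m\|u_h^m\|_{L^2}^2]$, and passing to the full expectation requires controlling the complement event, which needs the very estimate being proved. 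Within the paper this is also circular: the present lemma is what is used, via Markov's inequality, to show $\mP(\Omega_{\rho,M}^c)\to 0$.
\item Testing with $\|u_h^{m+1}\|_{L^2}^{p-2}u_h^{m+1}$ does not help either. The offending term becomes $\|u_h^{m+1}\|_{L^2}^{p-2}\|u_h^m\|_{L^2}^4$, which is again superlinear relative to a $p$-th moment Gronwall argument. It again carries the lagged-index mismatch and the same smallness condition on $\chi$.
\end{itemize}
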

	\begin{proof}
		We proceed to prove (a) as follows. Taking $\varphi_h= u_h^{m+1}$ in \eqref{eq_discrete_u} and using the identity $2a(a-b) = a^2 - b^2 + (a-b)^2$, we obtain
		\begin{align}\label{eq_3.6}
			&\frac{1}{2}\bigl[\|u_h^{m+1}\|^2_{L^2} - \|u_h^{m}\|^2_{L^2} + \|u_h^{m+1} - u_h^m\|^2_{L^2}\bigr] +  k\|\nab u_h^{m+1}\|^2_{L^2}\\\nonumber
			& = \chi k\bigl(u_h^{m+1}\vsigma_h^{m+1}, \nab u_h^{m+1}\bigr)+ k\bigl(g(u^{m+1}_h), u_h^{m+1}\bigr) +\bigl(B(u_h^m)\Delta W_m,  u_h^{m+1} - u_h^m\bigr)\\\nonumber
			&\qquad +\bigl(B(u_h^m)\Delta W_m,  u_h^m\bigr)\\\nonumber
			&:= I + II + III + IV.
		\end{align}
		
		First, we notice that 
		\begin{align*}
			II =  k\|u_h^{m+1}\|^2_{L^2} - k\|u_h^{m+1}\|^4_{L^4}.
		\end{align*}
		
		Next, using integration by parts and the  Cauchy-Schwarz inequality, we obtain
		\begin{align}\label{eq_3.7}
			I&=-\chi k \bigl((u_h^{m+1})^2, \nab\cdot\vsigma_h^{m+1}\bigr)\\\nonumber
			&\leq \chi k \|u_h^{m+1}\|^2_{L^4}\|\nab\cdot \vsigma_h^{m+1}\|_{L^2}.
		\end{align}
		
		Next, choosing $\vvarphi_h = \vsigma_h^{m+1}$ in \eqref{eq_discrete_sigma}, we obtain
		\begin{align}\label{eq_3.12}
			\|\vsigma_h^{m+1}\|^2_{H^1} \leq \|u_h^{m}\|^2_{L^2},
		\end{align}
		which together with \eqref{eq_3.7} give us that 
		\begin{align}\label{eq_3.8}
			I &\leq   \chi k \|u_h^{m+1}\|^2_{L^4}\|u_h^{m}\|_{L^2}\\\nonumber
			&\leq \frac{k}{4} \| u_h^{m+1}\|^4_{L^4} + Ck\|u_h^{m}\|^2_{L^2}.
		\end{align}
		
		Next, we estimate $III$ and $IV$ with the expectation. First, using the independence property of the increments $\Delta W_m$, we immediately have that $\mE\left[IV\right] = 0$. Then, using the Cauchy-Schwarz inequality and the assumption of $B$ as well as the fact that $\mE\left[|\Delta W_m|^2\right] = k$, we obtain
		\begin{align*}
			\mE\left[III\right] &\leq \frac14\mE\left[\|u_h^{m+1} - u_h^m\|^2_{L^2}\right] + C_Bk \mE\left[\|u_h^m\|^2_{L^2}\right].
		\end{align*}
		
		Substituting all the estimates from $I, ..., IV$ into \eqref{eq_3.6} in the expectation, we arrive at
		\begin{align}\label{eq_3.9}
			&	\frac12\mE\left[\|u_h^{m+1}\|^2_{L^2} - \|u_h^m\|^2_{L^2} \right] + \frac14\mE\left[\|u_h^{m+1} - u_h^m\|^2_{L^2}\right]  + \frac34\mE\left[ k\|u_h^{m+1}\|^4_{L^4} + k \|\nab u_h^{m+1}\|^2_{L^2}\right]\\\nonumber
			&\leq Ck\mE\left[\|u_h^{m+1}\|^2_{L^2} + \|u_h^m\|^2_{L^2}\right].
		\end{align}
		
		Next, applying the summation $\sum_{m=0}^{\ell}$ to \eqref{eq_3.9} for any $1\leq \ell \leq M-1$, we obtain
		\begin{align}\label{eq_3.13}
			&	\frac12\mE\left[\|u_h^{\ell+1}\|^2_{L^2}\right] + \mE\left[\sum_{m=0}^{\ell}\left(\frac14\|u_h^{m+1} - u_h^m\|^2_{L^2} + \frac{3k }{4}\|u_h^{m+1}\|^4_{L^4} +  k\|\nab u_h^{m+1}\|^2_{L^2}\right)\right]\\\nonumber
			&\leq \frac{1}{2}\mE\left[\|u_h^0\|^2_{L^2}\right] + Ck\sum_{m=0}^{\ell}\mE\left[\|u_h^{m+1}\|^2_{L^2} + \|u_h^m\|^2_{L^2}\right].
		\end{align}
		
		With this and using the discrete Gronwall inequality, for sufficiently small $k\in (0,1)$, we obtain the estimate in Part (a) after taking the maximum over $0\leq \ell\leq M-1$.
		
		Lastly, it is clear that Part (b) is obtained by taking $\vvarphi_h= \vsigma_h^{m+1}$ and using Part (a). Similarly, Part (c) is a consequence of Parts (a) and (b). The proof is complete.
	\end{proof}
	
	Next, we state and prove the error estimates of the Main Algorithm. 
	
	\subsection{Error estimates} In this part, we analyze and derive the error estimates for the Main Algorithm. To control the nonlinearity, we introduce the following sequence of subsets of the sample space
	\begin{align}\label{omega}
		{\Omega}_{\rho, m} := \left\{\omega \in \Omega; \, \sup_{t \leq t_m} (\|u(t)\|^4_{L^4} + \|u(t)\|^2_{L^2}) \leq \rho \right\} \cap \left\{\omega \in \Omega; \, \sup_{0\leq n \leq m} \|u_h^n\|^2_{L^2} \leq \rho \right\},
	\end{align}
	where $u$ is the variational solution from \eqref{Weak_formulation} and for some $\rho>0$ specified later. We observe that ${\Omega}_{\rho,0} \supset {\Omega}_{\rho,1} \supset ... \supset {\Omega}_{\rho,\ell}$. 
	
	\begin{remark}
		In the error estimate stated in Theorem~\ref{Thm_global_error}, we choose
		\begin{align}
			\rho(k) := \frac{\ln(\ln(1/k))}{\widehat{C}} >0,
		\end{align}
		where $\widehat{C}$ is determined the proof of of Theorem \ref{Thm_global_error}.
		
		With this choice of $\rho(k)$, an application of the Markov inequality together with Lemma~\ref{Stability_PDE} and Lemma \ref{lemma_fem_stability} yields
		\begin{align*}
			\mP(\Omega_{\rho,M}^c)
			&\le \frac{\widehat{C}}{\ln(\ln(1/k))}
			\mE\!\left[
			\sup_{0\le t\le T}
			(\|u(t)\|^4_{L^4} + \|u(t)\|^2_{L^2})+ \sup_{0\leq n \leq M}\|u_h^n\|^2_{L^2}
			\right].
		\end{align*}
		Since the expectation on the right-hand side is finite, we conclude that
		\[
		\mP(\Omega_{\rho,M}^c) \longrightarrow 0
		\qquad \text{as } k \to 0,
		\]
		and hence
		\[
		\mP(\Omega_{\rho,M}) \longrightarrow 1
		\qquad \text{as } k \to 0.
		\]
		
		Therefore, the convergence of the numerical solutions implied by Theorem~\ref{Thm_global_error} is convergence in probability.
		
		\end{remark}
		
		\medskip
	
	\begin{theorem}\label{Thm_global_error}
		Let $(u,\vsigma,v)$ and $(u_h^m,\vsigma_h^m,v_h^m)$ be the variational solution of \eqref{Weak_formulation} and the approximate solution generated by the Main Algorithm, respectively. Assume that $u\in L^2(\Omega;L^{\infty}(0,T;H^2(D)))\cap L^6(\Ome;L^{\infty}(0,T;H^1(D)))$ and assume that $B$ satisfies \eqref{Assump_LinearGrowth} and \eqref{Assump_Lipschitz}.
		Then there exists $k_1>0$ such that, for every $k\in(0,k_1)$, there exists a constant $C\equiv C(u_0,\chi,T)>0,$
		independent of $k$ and $h$, for which the following error estimates hold:
		\begin{align}
			\label{estimate_u_fem}
			\max_{1\leq m\leq M}
			\mE\bigl[\mathbf{1}_{\Omega_{\rho,m}}
			\|u(t_m)-u_h^m\|_{L^2}^2
			\bigr]
			&+
			k
			\sum_{m=1}^{M}
			\mE\bigl[\mathbf{1}_{\Omega_{\rho,m}}
			\|\nab(u(t_m)-u_h^m)\|_{L^2}^2
			\bigr]
		\\\nonumber
		&\qquad	\leq
			C\ln(1/k)(k+h^2),
			\\
			\label{estimate_sigma_fem}
			\max_{1\leq m\leq M}
			\mE\bigl[\mathbf{1}_{\Omega_{\rho,m}}
			\|\vsigma(t_m)-\vsigma_h^m\|_{H^1}^2
			\bigr]
			&\leq
			C\ln(1/k)(k+h^2),
			\\
			\label{estimate_c_fem}
			\max_{1\leq m\leq M}
			\mE\bigl[\mathbf{1}_{\Omega_{\rho,m}}
			\|v(t_m)-v_h^m\|_{L^2}^2
			\bigr]
			&\leq
			C\ln(1/k)(k+h^2).
		\end{align}

	\end{theorem}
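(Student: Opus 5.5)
We prove the three estimates by a standard error-splitting argument driven by \eqref{estimate_u_fem}: once the $u$-error is controlled, the $\vsigma$- and $v$-errors follow from the linear structure of \eqref{eq_discrete_sigma} and \eqref{eq_discrete_c}. Write
\[
u(t_m)-u_h^m=\eta^m+e^m,\qquad \eta^m:=u(t_m)-\mathcal P_u u(t_m),\quad e^m:=\mathcal P_u u(t_m)-u_h^m\in X_u^h,
\]
and similarly
\[
\vsigma(t_m)-\vsigma_h^m=\vtheta^m+\e^m,\qquad \vtheta^m:=\vsigma(t_m)-\mathcal P_{\vsigma}\vsigma(t_m).
\]
The projection parts $\eta^m$ and $\vtheta^m$ are bounded directly by \eqref{ineq_interpolation}, using the assumed $L^2(\Omega;L^\infty(0,T;H^2))$ regularity and elliptic regularity for $\vsigma=\nabla v$. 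It then suffices to estimate $e^m$ and $\e^m$.

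Subtracting \eqref{eq_sigma} at $t_{m+1}$ from \eqref{eq_discrete_sigma} and using the definition \eqref{Ps} of $\mathcal P_{\vsigma}$ gives
\[
\|\e^{m+1}\|_{H^1}\le \|u(t_{m+1})-u_h^m\|_{L^2}\le \|u(t_{m+1})-u(t_m)\|_{L^2}+\|\eta^m\|_{L^2}+\|e^m\|_{L^2}.
\]
The time-lag term is $O(k^{1/2})$ in mean square by Lemma~\ref{Lemma_Holder}(a). For the $u$-equation, we integrate \eqref{eq_u} over $[t_m,t_{m+1}]$, subtract \eqref{eq_discrete_u}, and test with $\varphi_h=e^{m+1}$. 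We then multiply by $\mathbf 1_{\Omega_{\rho,m}}$, which is $\mathcal F_{t_m}$-measurable, and use $\mathbf 1_{\Omega_{\rho,m+1}}\le \mathbf 1_{\Omega_{\rho,m}}$ so that the error can be summed in $m$. The resulting terms are handled as follows.
\begin{enumerate}
\item[(i)] The diffusion and time-consistency terms $\int_{t_m}^{t_{m+1}}(\nabla(u(s)-u(t_{m+1})),\nabla e^{m+1})\,ds$ are bounded via Lemma~\ref{Lemma_Holder}(b), giving an $O(k)$ contribution after summation.
\item[(ii)] The chemotaxis difference is split as
\[
u\vsigma-u_h^{m+1}\vsigma_h^{m+1}=(u-u_h^{m+1})\vsigma+u_h^{m+1}(\vsigma-\vsigma_h^{m+1}).
\]
For the first piece we use $\|\vsigma\|_{L^\infty}\le C\|u\|_{L^4}\le C\rho^{1/4}$ on $\Omega_{\rho,m}$ by \eqref{sobolev_embedding}. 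For the second we use Ladyzhenskaya's inequality together with the $H^1$ bound on $\e^{m+1}$ derived above. We absorb $\frac14 k\|\nabla e^{m+1}\|^2$, leaving terms of the form $Ck\,\rho^{\,c}\|e^{m}\|^2$ plus $k\|u_h^{m+1}\|_{L^4}^2$-weighted terms that are controlled via the bounds $\|u_h^n\|_{L^2}^2\le\rho$ on $\Omega_{\rho,m}$ and Lemma~\ref{lemma_fem_stability}.
\item[(iii)] The logistic term uses the one-sided monotonicity
\[
(g(a)-g(b),a-b)\le \|a-b\|_{L^2}^2,
\]
since $-(a^3-b^3)(a-b)\le0$. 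This handles the cubic part without any localization, up to projection remainders that are bounded by the $L^6$ moments of Lemma~\ref{Stability_PDE}(b).
\item[(iv)] The noise term is written as
\[
\int_{t_m}^{t_{m+1}}\bigl(B(u(s))-B(u_h^m)\bigr)dW
\]
tested against $e^{m+1}-e^m$ and against $e^m$. The second pairing has zero mean because $\mathbf 1_{\Omega_{\rho,m}}e^m$ is $\mathcal F_{t_m}$-measurable. The first is bounded by the It\^o isometry, \eqref{Assump_Lipschitz}, and Lemma~\ref{Lemma_Holder}(a), contributing $Ck\,\mE[\mathbf 1\|e^m\|^2]+Ck(k+h^2)$.
\end{enumerate}

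Collecting terms yields a recursion of the form
\[
\mE\bigl[\mathbf 1_{\Omega_{\rho,m+1}}\|e^{m+1}\|^2\bigr]+\tfrac k2\,\mE\bigl[\mathbf 1_{\Omega_{\rho,m+1}}\|\nabla e^{m+1}\|^2\bigr]\le\bigl(1+\widehat C\rho k\bigr)\mE\bigl[\mathbf 1_{\Omega_{\rho,m}}\|e^m\|^2\bigr]+Ck(k+h^2).
\]
Here the cross terms from $\mathbf 1_{\Omega_{\rho,m}}$ versus $\mathbf 1_{\Omega_{\rho,m+1}}$ are harmless because the indicators are monotone. The discrete Gronwall inequality then gives the bound $C e^{\widehat C\rho T}(k+h^2)$. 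This is where the choice of $\rho$ enters. If the growth constant is actually exponential in $\rho$, say $e^{\widehat C\rho}$ coming from an $L^4$-type Gronwall factor, then $\rho=\ln\ln(1/k)/\widehat C$ gives $e^{\widehat C\rho T}\sim(\ln(1/k))^T$. We will arrange the constants so that, with $T$ absorbed into $\widehat C$, the factor is exactly $\ln(1/k)$; this fixes $\widehat C$ in the remark. Adding back $\eta^m$ gives \eqref{estimate_u_fem}.

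Estimate \eqref{estimate_sigma_fem} then follows from the bound on $\e^{m+1}$ above together with \eqref{estimate_u_fem} at level $m$ and the H\"older estimate \eqref{eq_holder_L2}. Estimate \eqref{estimate_c_fem} follows by subtracting the $v$-equations and testing with $\mathcal P_v v(t_m)-v_h^m$, which gives
\[
\|v(t_m)-v_h^m\|_{L^2}\le C\bigl(\|\nabla\cdot(\vsigma-\vsigma_h^m)\|_{L^2}+\|u-u_h^m\|_{L^2}\bigr)+Ch^2.
\]

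The main obstacle is step (ii), the chemotaxis term with the time-lagged $\vsigma_h^{m+1}$. The factor $u_h^{m+1}$ is not controlled in $L^\infty$ by the localization set, which only bounds $u_h^n$ in $L^2$. The approach we would try first is to move the derivative via integration by parts onto $(e^{m+1})^2\nabla\cdot\vsigma$, so that only $\|e\|_{L^4}^2\le C\|e\|\,\|e\|_{H^1}$ appears. For the remaining part we use $\|u_h^{m+1}\|_{L^4}^2\le C\|u_h^{m+1}\|\,\|u_h^{m+1}\|_{H^1}$, together with a summable weight $k\|\nabla u_h^{m+1}\|^2$ from Lemma~\ref{lemma_fem_stability}. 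This is handled with a Gronwall argument with random, summable coefficients inside the localization. This is also where the $\rho$-dependence, and hence the $\ln(1/k)$ loss, originates.
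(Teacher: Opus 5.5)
Most of your outline matches the paper: the projection splitting, the time-lag bound for $\e^{m+1}$, localization with monotone indicators, the It\^o-isometry and martingale treatment of the noise, monotonicity for $g$, and a deterministic Gronwall argument. The gap is in step (ii), at exactly the obstacle you flag, and your proposed remedy does not close it.

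You split the chemotaxis term as $(u-u_h^{m+1})\vsigma+u_h^{m+1}(\vsigma-\vsigma_h^{m+1})$. After Ladyzhenskaya and your bound on $\e^{m+1}$, the second piece produces $Ck\,\|u_h^{m+1}\|_{L^2}\|u_h^{m+1}\|_{H^1}\|e^m\|_{L^2}^2$. Nothing on $\Omega_{\rho,m}$ bounds this coefficient, since that set only constrains $\|u_h^n\|_{L^2}$ for $n\le m$. So $\mE\bigl[\mathbf 1_{\Omega_{\rho,m}}\|u_h^{m+1}\|_{L^2}\|u_h^{m+1}\|_{H^1}\|e^m\|^2\bigr]$ cannot be dominated by $C(\rho)\,\mE\bigl[\mathbf 1_{\Omega_{\rho,m}}\|e^m\|^2\bigr]$, and your recursion with the deterministic factor $1+\widehat C\rho k$ does not follow.

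A Gronwall argument with random summable weights is not available either, for two reasons. First, the inequality contains the martingale increment $\bigl(\int_{t_m}^{t_{m+1}}(B(u)-B(u_h^m))\,dW,\,e^m\bigr)$. This can be dropped only after taking expectations, and only when its multiplier is $\mathcal F_{t_m}$-measurable, whereas your weights involve $u_h^{m+1}$. Second, Lemma~\ref{lemma_fem_stability} controls $k\sum_n\|\nabla u_h^n\|^2$ only in mean, not on the localization set.

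Your first piece has the same defect in milder form. It places the random coefficient $\|\vsigma(t_{m+1})\|_{L^\infty}^2\lesssim\|u(t_{m+1})\|_{L^4}^2$ in front of $\|e^{m+1}\|^2$. But $\Omega_{\rho,m}$ bounds $u$ only up to $t_m$ and says nothing about $\|e^{m+1}\|$, so this term cannot be absorbed in expectation.

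The missing idea is to split the other way, so that the time lag does the work:
\[
u(t_{m+1})\bigl(\vsigma(t_{m+1})-\vsigma_h^{m+1}\bigr)+\bigl(u(t_{m+1})-u_h^{m+1}\bigr)\vsigma_h^{m+1},
\]
plus time-consistency terms.

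In the first product, the exact solution multiplies $\e^{m+1}$, and $\|\e^{m+1}\|_{H^1}$ is controlled by the level-$m$ error. Since $\|e^m\|^2\le4\rho$ on $\Omega_{\rho,m}$, you can write $\|u(t_{m+1})\|_{L^4}^2\le2\|u(t_m)\|_{L^4}^2+2\|u(t_{m+1})-u(t_m)\|_{L^4}^2$. The increment part goes into the consistency error, leaving $Ck\sqrt\rho\,\|e^m\|^2$.

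In the second product, integration by parts gives $-\tfrac{\chi k}{2}\bigl((e^{m+1})^2,\nabla\cdot\vsigma_h^{m+1}\bigr)$. Because $\vsigma_h^{m+1}$ depends only on $u_h^m$, you have $\|\vsigma_h^{m+1}\|_{H^1}\le\|u_h^m\|_{L^2}\le\sqrt\rho$ on $\Omega_{\rho,m}$, which is an $\mathcal F_{t_m}$-measurable bound. The coefficient of $\|e^{m+1}\|^2$ is therefore $Ck\rho$, which can be absorbed once $Ck(1+\rho)\le\tfrac12$.

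This is why $\Omega_{\rho,m}$ is built from $\|u\|_{L^4}^4$ and $\|u_h^n\|_{L^2}^2$: every coefficient becomes a deterministic power of $\rho$. The Gronwall factor is then $e^{C(1+\sqrt\rho+\rho)T}$, and $\rho=\ln\ln(1/k)/\widehat C$, with $\widehat C$ absorbing $CT$, turns it into $\ln(1/k)$. With this change, the rest of your outline (noise, logistic term, recovery of $\vsigma$ and $v$) goes through as in the paper.
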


	\begin{proof} We only focus on giving the proof of \eqref{estimate_u_fem}. The proofs of \eqref{estimate_sigma_fem} and \eqref{estimate_c_fem} can be deduced directly from \eqref{estimate_u_fem}. First, denote
		\begin{align*}
			e_{u}^m &= u(t_m) - u^m_h  = u(t_m) - \cP_{u} u(t_m) + \cP_{u} u(t_m) - u_h^m:= \theta_{u}^m + \varepsilon_{u}^m  \\
			e_{\vsigma}^m &= \vsigma(t_m) - \vsigma_h^m = \vsigma(t_m) - \cP_{\vsigma}\vsigma(t_m) + \cP_{\vsigma} \vsigma(t_m) - \vsigma_h^m:= \vtheta_{\vsigma}^m + \e_{\vsigma}^m. 
		\end{align*}
		
		Subtracting \eqref{eq_u} from \eqref{eq_discrete_u}, we obtain the following error equation.
		\begin{align}\label{eq_3.27}
			&\bigl(e_u^{m+1} - e^m_u,\varphi_h\bigr) + k\bigl(\nab e_u^{m+1}, \nab \varphi_h\bigr) \\\nonumber
			&= \nu \int_{t_m}^{t_{m+1}} \bigl(\nab u(t_{m+1}) - \nab u(s), \nab \varphi_h\bigr)\, ds \\\nonumber
			&\qquad+ \chi\int_{t_m}^{t_{m+1}} \bigl(u(s)\vsigma(s) - u^{m+1}_h\vsigma^{m+1}_h, \nab \varphi_h\bigr)\, ds \\\nonumber
			&\qquad+\int_{t_m}^{t_{m+1}} \bigl(g(u(s)) - g(u_h^{m+1}), \varphi_h\bigr)\, ds \\\nonumber
			&\qquad+ \Bigl(\int_{t_m}^{t_{m+1}} B(u(s)) dW(s) - B(u_h^m)\Delta W_m, \varphi_h\Bigr).
		\end{align}
		
		Taking $v_h = \varepsilon_u^{m+1}$ in \eqref{eq_3.27} and using the orthogonal property of $\cP_u$ \eqref{Pu}, we obtain the following error equation:
		\begin{align}\label{eq_3.28}
			&\frac12\bigl[\|\varepsilon^{m+1}_u\|^2_{L^2} - \|\varepsilon_u^m\|^2_{L^2}\bigr] +\frac12\|\varepsilon_u^{m+1} - \varepsilon_u^m\|^2_{L^2}+  k\|\nab\varepsilon_{u}^{m+1}\|^2_{L^2}\\\nonumber
			&= -  k\bigl(\nab\theta_u^{m+1},\nab\varepsilon_u^{m+1}\bigr) +   \int_{t_m}^{t_{m+1}} \bigl(\nab u(t_{m+1}) - \nab u(s), \nab \varepsilon_u^{m+1}\bigr)\, ds \\\nonumber
			&\qquad+ \chi\int_{t_m}^{t_{m+1}} \bigl(u(s)\vsigma(s) - u^{m+1}_h\vsigma^{m+1}_h, \nab \varepsilon_u^{m+1}\bigr)\, ds \\\nonumber
			&\qquad+\int_{t_m}^{t_{m+1}} \bigl(g(u(s)) - g(u_h^{m+1}), \varepsilon_{u}^{m+1}\bigr)\, ds \\\nonumber
			&\qquad+ \Bigl(\int_{t_m}^{t_{m+1}} B(u(s)) dW(s) - B(u_h^m)\Delta W_m,\varepsilon_{u}^{m+1}\Bigr)\\\nonumber
			&:= I + II + III + IV + V.
		\end{align}
		
		Now, we begin to estimate the right-hand side of \eqref{eq_3.28}. First, denote 
		\begin{align*}
			\theta_{u}(t) = u(t) - \cP_{u} u(t),\qquad \theta_{\vsigma}(t) = \vsigma(t) - \cP_{\vsigma} \vsigma(t).
		\end{align*}
		 Using Cauchy-Schwarz's inequality and \eqref{ineq_interpolation}, we get
		\begin{align}
			I &=-  \int_{t_m}^{t_{m+1}}\bigl(\nab(\theta_u^{m+1} - \theta_{u}(s)),\nab\varepsilon_u^{m+1}\bigr)\, ds -  \int_{t_m}^{t_{m+1}}\bigl(\nab\theta_{u}(s),\nab\varepsilon_u^{m+1}\bigr)\, ds\\\nonumber&\leq \frac{ k}{16} \|\nab \varepsilon_u^{m+1}\|^2_{L^2} + Ch^2 \int_{t_{m}}^{t_{m+1}}\|u(s)\|^2_{H^2}\, ds + C\int_{t_{m}}^{t_{m+1}}\|\nab(u(t_{m+1}) - u(s))\|^2_{L^2}\, ds.
		\end{align}
		
		Using the Cauchy-Schwarz inequality, we obtain
		\begin{align*}
			II &\leq C\int_{t_m}^{t_{m+1}}\|\nab(u(t_{m+1}) - u(s))\|^2_{L^2}\, ds + \frac{ k}{16}\|\nab \varepsilon_u^{m+1}\|^2_{L^2}.
		\end{align*}
		
		Now, we turn to estimate $III$ by adding and subtracting the term $u(t_{m+1})\vsigma(t_{m+1})$:
		\begin{align}
			III &= \chi \int_{t_m}^{t_{m+1}} \bigl(u(s)\vsigma(s) - u(t_{m+1})\vsigma(t_{m+1}),\nab \varepsilon_u^{m+1}\bigr)\, ds \\\nonumber
			&\qquad+ \chi k \bigl(u(t_{m+1})\vsigma(t_{m+1}) - u^{m+1}_h\vsigma^{m+1}_h,\nab \varepsilon_u^{m+1}\bigr)\\\nonumber
			&:= III_1 + III_2,
		\end{align}
		where the terms $III_1$ and $III_2$ are estimated as follows. By using the Cauchy-Schwarz inequality and then the Young inequality, we have
		\begin{align*}
			III_1 &=\chi \int_{t_m}^{t_{m+1}}[\bigl(u(s)[\vsigma(s) - \vsigma(t_{m+1})] + [u(s) - u(t_{m+1})]\vsigma(t_{m+1}), \nab \varepsilon_{u}^{m+1}\bigr)]\, ds \\\nonumber
			&\leq C\int_{t_m}^{t_{m+1}}[\|u(s)[\vsigma(s) - \vsigma(t_{m+1})]\|^2_{L^2} + \|[u(s) - u(t_{m+1})]\vsigma(t_{m+1})\|^2_{L^2}]\, ds \\\nonumber
			&\qquad\qquad\qquad+ \frac{k}{16}\|\nab \varepsilon_u^{m+1}\|^2_{L^2}\\\nonumber
			&\leq C\int_{t_m}^{t_{m+1}}[\|u(s)\|^2_{L^4}\|[\vsigma(s) - \vsigma(t_{m+1})]\|^2_{L^4} + \|[u(s) - u(t_{m+1})]\|^2_{L^4}\|\vsigma(t_{m+1})\|^2_{L^4}]\, ds \\\nonumber
			&\qquad\qquad\qquad+ \frac{k}{16}\|\nab \varepsilon_u^{m+1}\|^2_{L^2}.
		\end{align*}

		Now, we turn to control $III_2$ by adding and subtracting $u(t_{m+1})\vsigma_h^{m+1}$ to get
		\begin{align*}
			III_2 &= \chi k \bigl(u(t_{m+1})e_{\vsigma}^{m+1},\nab \varepsilon_u^{m+1}\bigr) + \chi k \bigl(e_{u}^{m+1}\vsigma_h^{m+1},\nab \varepsilon_u^{m+1}\bigr)\\\nonumber
			&= \chi k \bigl(u(t_{m+1})\varepsilon_{\vsigma}^{m+1},\nab \varepsilon_u^{m+1}\bigr) + \chi k \bigl(\varepsilon_{u}^{m+1}\vsigma_h^{m+1},\nab \varepsilon_u^{m+1}\bigr)\\\nonumber
			&\qquad+ \chi k \bigl(u(t_{m+1})\theta_{\vsigma}^{m+1},\nab \varepsilon_u^{m+1}\bigr) + \chi k \bigl(\theta_{u}^{m+1}\vsigma_h^{m+1},\nab \varepsilon_u^{m+1}\bigr)\\\nonumber
			&:= III_{2,1} + III_{2,2} + III_{2,3} + III_{2,4}.
		\end{align*}	
		
		To estimate $III_{2,1}$, we have to control $\varepsilon_{\vsigma}^{m+1}$ as follows.	Subtracting \eqref{eq_sigma} to \eqref{eq_discrete_sigma}, we obtain
		\begin{align}\label{eq_3.31}
			&	\bigl(e_{\vsigma}^{m+1}, \vvarphi_h\bigr) + \bigl(\nab\cdot e_{\vsigma}^{m+1},\nab\cdot\vvarphi_h\bigr) + \bigl(rot\, e_{\vsigma}^{m+1},\, rot\, \vvarphi_h\bigr) \\\nonumber
			&= -\bigl(u(t_{m+1}) - u(t_m),\nab\cdot\vvarphi_h\bigr) - \bigl(e_{u}^m,\nab\cdot\vvarphi_h\bigr).
		\end{align}
		Taking $\vvarphi_h = \varepsilon_{\vsigma}^{m+\frac12}$ in \eqref{eq_3.31} and using the orthogonality of $\cP_{\vsigma}$ from \eqref{Ps}, we arrive at
		\begin{align}\label{eq3.32}
			\|\varepsilon_{\vsigma}^{m+1}\|^2_{H^1} & \leq C\bigl[\|u(t_{m+1}) - u(t_m)\|^2_{L^2} \bigr] + C\|e_u^{m}\|^2_{L^2}\\\nonumber
			& \leq C\bigl[\|u(t_{m+1}) - u(t_m)\|^2_{L^2} \bigr] + C\|\varepsilon_u^{m}\|^2_{L^2} + Ch^2\| u(t_{m})\|^2_{H^1}.
		\end{align}
		
		Using  \eqref{eq3.32} and the Ladyzhenskaya inequality to bound $III_{2,1}$, we obtain 
		\begin{align}
			III_{2,1} &\leq \chi k \|u(t_{m+1})\|_{L^4}\|\varepsilon_{\vsigma}^{m+1}\|_{L^4}\|\nab \varepsilon_{u}^{m+1}\|_{L^2}\\\nonumber
			&\leq \frac{ k}{16}\|\nab \varepsilon_{u}^{m+1}\|^2_{L^2} + Ck\|u(t_{m+1})\|^2_{L^4} \|\varepsilon_{\vsigma}^{m+1}\|^2_{H^1}\\\nonumber
			&\leq  \frac{ k}{16}\|\nab \varepsilon_{u}^{m+1}\|^2_{L^2} + Ck\|u(t_{m+1})\|^2_{L^4}\|u(t_{m+1}) - u(t_m)\|^2_{L^2}  \\\nonumber
			&\qquad+ Ck\|u(t_{m+1})\|^2_{L^4}\|\varepsilon_u^{m}\|^2_{L^2} + Ckh^2\|u(t_{m+1})\|^2_{L^4}\|u(t_{m})\|^2_{H^1}.
		\end{align}

		Next, using the Cauchy-Schwarz inequality, the Ladyzhenskaya inequality and then Lemma \ref{lemma_fem_stability}, we have
		\begin{align}
			III_{2,2} &=-\chi k ((\varepsilon_{u}^{m+1})^2,\nab\cdot\vsigma_h^{m+1}) \\\nonumber
			&\leq \chi k \|\varepsilon_{u}^{m+1}\|^2_{L^4}\|\vsigma_h^{m+1}\|_{H^1}\\\nonumber
			&\leq Ck \|\varepsilon_{u}^{m+1}\|_{L^2}\|\nab\varepsilon_{u}^{m+1}\|_{L^2}\|u_h^m\|_{L^2}\\\nonumber
			&\leq \frac{k}{16} \|\nab \varepsilon_{u}^{m+1}\|^2_{L^2} + Ck\|u_h^m\|^2_{L^2}\|\varepsilon_{u}^{m+1}\|^2_{L^2}.
		\end{align}
		
		To estimate the terms $III_{2,3}$ and $III_{2,4}$, we use the Cauchy-Schwarz inequality, the inequality \eqref{ineq_interpolation} as follows:
		\begin{align}
			III_{2,3} + III_{2,4} &\leq \frac{k}{16}\|\nab \varepsilon_{u}^{m+1}\|^2_{L^2} + Ck \|u(t_{m+1})\|^2_{L^4}\|\theta_{\vsigma}^{m+1}\|^2_{L^4} \\\nonumber
			&\quad+ C\int_{t_{m}}^{t_{m+1}} \|\theta_{u}^{m+1}- \theta_{u}(s)\|^2_{L^4}\|\vsigma_h^{m+1}\|^2_{L^4}\, ds  + C\int_{t_{m}}^{t_{m+1}} \| \theta_{u}(s)\|^2_{L^4}\|\vsigma_h^{m+1}\|^2_{L^4}\, ds\\\nonumber
			&\leq \frac{ k}{16}\|\nab \varepsilon_{u}^{m+1}\|^2_{L^2} + Ckh^3\|u(t_{m+1})\|^2_{L^4}\|\vsigma(t_{m+1})\|^2_{H^2} \\\nonumber
			&\quad+ C h^3\int_{t_{m}}^{t_{m+1}}\|u_h^{m}\|^2_{L^2}\|u(s)\|^2_{H^2}\, ds + C\int_{t_{m}}^{t_{m+1}} \|\nab(u(t_{m+1})- u(s))\|^2_{L^2}\|u_h^{m}\|^2_{L^2}\, ds.
		\end{align}

		Next, we proceed to control $IV$ as follows. Using the fact that $(g(a) - g(b))(a-b) \leq (a-b)^2$, we have
		\begin{align}
			IV &= \int_{t_{m}}^{t_{m+1}} \bigl(g(u(s)) - g(u(t_{m+1})), \varepsilon_{u}^{m+1}\bigr)\, ds + k \bigl(g(u(t_{m+1})) - g(\cP_{u}u(t_{m+1})), \varepsilon_{u}^{m+1}\bigr) \\\nonumber
			&\qquad+ k \bigl(g(\cP_{u}u(t_{m+1})) - g(u_h^{m+1}), \varepsilon_{u}^{m+1}\bigr) \\\nonumber
			&\leq C\int_{t_{m}}^{t_{m+1}}\|g(u(s)) - g(u(t_{m+1}))\|^2_{L^2}\, ds + Ck\|g(u(t_{m+1})) - g(\cP_{u}u(t_{m+1}))\|^2_{L^2} \\\nonumber
			&\qquad+ k \|\varepsilon_{u}^{m+1}\|^2_{L^2} +  k \|\varepsilon_{u}^{m+1}\|^2_{L^2}.
		\end{align}
		
		Using the identity $a^3-b^3=(a-b)(a^2+ab+b^2),$
		we have
		\begin{align}
			|g(a)-g(b)|
			\le
			|a-b|\left(1+|a|^2+|b|^2\right).
		\end{align}
		
		With this and then using H\"older's inequality and the Sobolev embedding
		$H^1(D)\hookrightarrow L^q(D)$ holds for all $1\le q<\infty$, we obtain
		\begin{align}
			IV
			&\le
			\int_{t_m}^{t_{m+1}}
			\|u(s)-u(t_{m+1})\|_{L^2}
			\left\|1+|u(s)|^2+|u(t_{m+1})|^2\right\|_{L^4}\|\varepsilon_{u}^{m+1}\|_{L^4}\,ds
			\nonumber\\
			&\quad
			+
			k
			\|\theta_{u}^{m+1}\|_{L^2}
			\left\|1+|u(t_{m+1})|^2+|\mathcal P_u u(t_{m+1})|^2\right\|_{L^4}\|\varepsilon_{u}^{m+1}\|_{L^4}
			+
			k\|\varepsilon_u^{m+1}\|_{L^2}^{2}\\\nonumber
				&\le
			C\int_{t_m}^{t_{m+1}}
			\|u(s)-u(t_{m+1})\|_{L^2}
			\left\|1+|u(s)|^2+|u(t_{m+1})|^2\right\|_{L^4}\|\varepsilon_{u}^{m+1}\|_{H^1}\,ds
			\nonumber\\
			&\quad
			+
			Ck
			\|\theta_{u}^{m+1}\|_{L^2}
			\left\|1+|u(t_{m+1})|^2+|\mathcal P_u u(t_{m+1})|^2\right\|_{L^4}\| \varepsilon_{u}^{m+1}\|_{H^1}
			+
			k\|\varepsilon_u^{m+1}\|_{L^2}^{2}\\\nonumber
			&\le
			C\int_{t_m}^{t_{m+1}}
			\|u(s)-u(t_{m+1})\|^2_{L^2} (1 + \|u(s)\|^4_{H^1} + \|u(t_{m+1})\|^4_{H^1})\,ds
			\nonumber\\
			&\quad
			+
			Ckh^2
			\|u(t_{m+1})\|^2_{H^1}(1 + \|u(t_{m+1})\|^4_{H^1}) + \frac{k}{16}(\|\varepsilon_{u}^{m+1}\|^2_{L^2} + \|\nab \varepsilon_{u}^{m+1}\|^2_{L^2})
			+
			k\|\varepsilon_u^{m+1}\|_{L^2}^{2}\\\nonumber
		\end{align}
		
		Finally, we estimate the noise term $V$ as follows. 
		\begin{align*}
			V &=  \left(\int_{t_m}^{t_{m+1}} (B(u(s)) - B(u_h^m))\, dW(s), \varepsilon_u^{m+1} - \varepsilon_u^m\right) +  \left(\int_{t_m}^{t_{m+1}} (B(u(s)) - B(u_h^m))\, dW(s),  \varepsilon_u^m\right)\\\nonumber
			&=  \left(\int_{t_m}^{t_{m+1}} (B(u(s)) - B(u(t_{m})))\, dW(s), \varepsilon_u^{m+1} - \varepsilon_u^m\right) +\bigl((B(u(t_m)) - B(u_h^m))\Delta W_m, \varepsilon_u^{m+1} - \varepsilon_u^m\bigr) \\\nonumber
			&\qquad+  \left(\int_{t_m}^{t_{m+1}} (B(u(s)) - B(u^m_h))\, dW(s),  \varepsilon_u^m\right)\\\nonumber
			&\leq  2\left\|\int_{t_m}^{t_{m+1}} (B(u(s)) - B(u(t_{m})))\, dW(s)\right\|^2_{L^2} +2\left\|(B(u(t_m)) - B(u_h^m))\Delta W_m\right\|^2_{L^2} + \frac14\|\varepsilon_u^{m+1} - \varepsilon_u^{m}\|^2_{L^2} \\\nonumber
			&\qquad+  \left(\int_{t_m}^{t_{m+1}} (B(u(s)) - B(u^m_h))\, dW(s),  \varepsilon_u^m\right).
		\end{align*}
		
		Substituting all estimates from $I, ..., V$ into \eqref{eq_3.28}, we obtain
	\begin{align}\label{eq3.33}
		&\frac12\left[\|\varepsilon^{m+1}_u\|^2_{L^2}
		-\|\varepsilon_u^m\|^2_{L^2}\right]
		+\frac14\|\varepsilon_u^{m+1}-\varepsilon_u^m\|^2_{L^2}
		+\frac{9k}{16}\|\nabla\varepsilon_u^{m+1}\|^2_{L^2}
		\nonumber\\
		&\le Ch^2 \int_{t_{m}}^{t_{m+1}}\|u(s)\|^2_{H^2}\, ds + C\int_{t_{m}}^{t_{m+1}}\|\nab(u(t_{m+1}) - u(s))\|^2_{L^2}\, ds\\\nonumber
		&\qquad +C\int_{t_m}^{t_{m+1}}[\|u(s)\|^2_{L^4}\|[\vsigma(s) - \vsigma(t_{m+1})]\|^2_{L^4} + \|[u(s) - u(t_{m+1})]\|^2_{L^4}\|\vsigma(t_{m+1})\|^2_{L^4}]\, ds \\\nonumber
		&\qquad+ Ck\|u(t_{m+1})\|^2_{L^4}\|u(t_{m+1}) - u(t_m)\|^2_{L^2}  + Ck\|u_h^m\|^2_{L^2}\|\varepsilon_{u}^{m+1}\|^2_{L^2}\\\nonumber
		&\qquad+ Ck\|u(t_{m+1})\|^2_{L^4}\|\varepsilon_u^{m}\|^2_{L^2} + Ckh^2\|u(t_{m+1})\|^2_{L^4}\|u(t_{m})\|^2_{H^1} \\\nonumber
		&\qquad+  Ckh^3\|u(t_{m+1})\|^2_{L^4}\|\vsigma(t_{m+1})\|^2_{H^2} \\\nonumber
		&\qquad+ C h^3\int_{t_{m}}^{t_{m+1}}\|u_h^{m}\|^2_{L^2}\|u(s)\|^2_{H^2}\, ds + C\int_{t_{m}}^{t_{m+1}} \|\nab(u(t_{m+1})- u(s))\|^2_{L^2}\|u_h^{m}\|^2_{L^2}\, ds\\\nonumber
		&\qquad + C\int_{t_m}^{t_{m+1}}
		\|u(s)-u(t_{m+1})\|^2_{L^2} (1 + \|u(s)\|^4_{H^1} + \|u(t_{m+1})\|^4_{H^1})\,ds
		\\\nonumber
		&\qquad
		+
		Ckh^2
		\|u(t_{m+1})\|^2_{H^1}(1 + \|u(t_{m+1})\|^4_{H^1}) + \frac{17k}{16}\|\varepsilon_{u}^{m+1}\|^2_{L^2}
		\\\nonumber
		&\qquad+ 2\left\|\int_{t_m}^{t_{m+1}} (B(u(s)) - B(u(t_{m})))\, dW(s)\right\|^2_{L^2} +2\left\|(B(u(t_m)) - B(u_h^m))\Delta W_m\right\|^2_{L^2}  \\\nonumber
		&\qquad+  \left(\int_{t_m}^{t_{m+1}} (B(u(s)) - B(u^m_h))\, dW(s),  \varepsilon_u^m\right).
	\end{align}
	
	Now, let us consider $\Omega_{\rho, m}$ from \eqref{omega} with $\rho=\frac{\ln\ln(1/k)}{C}$. 
Multiplying \eqref{eq3.33} by $\mathbf 1_{\Omega_{\rho,m}}$, 
we obtain
\begin{align}
	&\mathbf 1_{\Omega_{\rho,m}}
	\left[
	\|\varepsilon_u^{m+1}\|_{L^2}^{2}
	-
	\|\varepsilon_u^{m}\|_{L^2}^{2}
	+
	\|\varepsilon_u^{m+1}-\varepsilon_u^{m}\|_{L^2}^{2}
	+
	 k\|\nabla\varepsilon_u^{m+1}\|_{L^2}^{2}
	\right]
	\nonumber\\
	&\le C\mathbf 1_{\Omega_{\rho,m}} \Biggl[h^2 \int_{t_{m}}^{t_{m+1}}\|u(s)\|^2_{H^2}\, ds + \int_{t_{m}}^{t_{m+1}}\|\nab(u(t_{m+1}) - u(s))\|^2_{L^2}\, ds\\\nonumber
	&\qquad +\int_{t_m}^{t_{m+1}}\left[\|u(s)\|^2_{L^4}\|\vsigma(s) - \vsigma(t_{m+1})\|^2_{L^4} + \|u(s) - u(t_{m+1})\|^2_{L^4}\|u(t_{m+1})\|^2_{L^2}\right]\, ds \\\nonumber
	&\qquad+ k\|u(t_{m+1})\|^2_{L^4}\|u(t_{m+1}) - u(t_m)\|^2_{L^2}  + k\rho\|\varepsilon_{u}^{m+1}\|^2_{L^2}\\\nonumber
	&\qquad+ k\sqrt{\rho}\|\varepsilon_u^{m}\|^2_{L^2} + k \|u(t_{m+1}) - u(t_m)\|^2_{L^4}\|\varepsilon_{u}^m\|^2_{L^2} + Ckh^2\|u(t_{m+1})\|^2_{L^4}\|u(t_{m})\|^2_{H^1} \\\nonumber
	&\qquad+  kh^3\|u(t_{m+1})\|^2_{L^4}\|u(t_{m+1})\|^2_{H^1} +  h^3\int_{t_{m}}^{t_{m+1}}\rho\|u(s)\|^2_{H^2}\, ds \\\nonumber
	&\qquad+ \rho\int_{t_{m}}^{t_{m+1}} \|\nab(u(t_{m+1})- u(s))\|^2_{L^2}\, ds\\\nonumber
	&\qquad + \int_{t_m}^{t_{m+1}}
	\|u(s)-u(t_{m+1})\|^2_{L^2} (1 + \|u(s)\|^4_{H^1} + \|u(t_{m+1})\|^4_{H^1})\,ds
	\\\nonumber
	&\qquad
	+
	Ckh^2
	\|u(t_{m+1})\|^2_{H^1}(1 + \|u(t_{m+1})\|^4_{H^1}) + \frac{17k}{16}\|\varepsilon_{u}^{m+1}\|^2_{L^2}
	\\\nonumber
	&\qquad+ 2\left\|\int_{t_m}^{t_{m+1}} (B(u(s)) - B(u(t_{m})))\, dW(s)\right\|^2_{L^2} +2\left\|(B(u(t_m)) - B(u_h^m))\Delta W_m\right\|^2_{L^2}  \\\nonumber
	&\qquad+  \left(\int_{t_m}^{t_{m+1}} (B(u(s)) - B(u^m_h))\, dW(s),  \varepsilon_u^m\right)\Biggr]
\end{align}

Since $\Omega_{\rho,m+1}\subset\Omega_{\rho,m},$
we have
\[
\mathbf 1_{\Omega_{\rho,m}}
\ge
\mathbf 1_{\Omega_{\rho,m+1}} .
\]
Hence,
\begin{align}
	&\mathbf 1_{\Omega_{\rho,m}}
	\left(
	\|\varepsilon_u^{m+1}\|_{L^2}^{2}
	-
	\|\varepsilon_u^m\|_{L^2}^{2}
	\right)
	\ge
	\mathbf 1_{\Omega_{\rho,m+1}}
	\|\varepsilon_u^{m+1}\|_{L^2}^{2}
	-
	\mathbf 1_{\Omega_{\rho,m}}
	\|\varepsilon_u^m\|_{L^2}^{2}.
\end{align}

Taking expectation and using the martingale property of the It\^o integral, the It\^o isometry and the assumption \eqref{Assump_Lipschitz}, we obtain
\begin{align}
	&
	\mE\left[
	\mathbf 1_{\Omega_{\rho,m+1}}
	\|\varepsilon_u^{m+1}\|_{L^2}^{2}
	\right]
	-
	\mE\left[
	\mathbf 1_{\Omega_{\rho,m}}
	\|\varepsilon_u^{m}\|_{L^2}^{2}
	\right]
	+
	 k
	\mE\left[
	\mathbf 1_{\Omega_{\rho,m}}
	\|\nabla\varepsilon_u^{m+1}\|_{L^2}^{2}
	\right]
	\nonumber\\
	&\le
	C\mE\left[
	\mathbf 1_{\Omega_{\rho,m}}
	\mathcal{E}_m
	\right]
	+
	Ck(1+ \sqrt{\rho}+ \rho)
	\mE\left[
	\mathbf 1_{\Omega_{\rho,m}}
	\|\varepsilon_u^{m+1}\|_{L^2}^{2} + \mathbf 1_{\Omega_{\rho,m}}
	\|\varepsilon_u^{m}\|_{L^2}^{2} 
	\right] \\\nonumber
	&\qquad+ C\mE\left[\mathbf 1_{\Omega_{\rho,m}}\left\|\int_{t_m}^{t_{m+1}} (B(u(s)) - B(u(t_{m})))\, dW(s)\right\|^2_{L^2} \right]\\\nonumber
	&\qquad+C\mE\left[\mathbf 1_{\Omega_{\rho,m}}\left\|(B(u(t_m)) - B(u_h^m))\Delta W_m\right\|^2_{L^2}\right] \\\nonumber
	&=
	C\mE\left[
	\mathbf 1_{\Omega_{\rho,m}}
	\mathcal{E}_m
	\right]
	+
	Ck(1+ \sqrt{\rho}+ \rho)
	\mE\left[
	\mathbf 1_{\Omega_{\rho,m}}
	\|\varepsilon_u^{m+1}\|_{L^2}^{2} + \mathbf 1_{\Omega_{\rho,m}}
	\|\varepsilon_u^{m}\|_{L^2}^{2} 
	\right] \\\nonumber
	&\qquad+ C\mE\left[\mathbf 1_{\Omega_{\rho,m}}\int_{t_m}^{t_{m+1}} \|B(u(s)) - B(u(t_{m}))\|^2_{L^2}\, ds \right]\\\nonumber
	&\qquad+Ck\mE\left[\mathbf 1_{\Omega_{\rho,m}}\left\|B(u(t_m)) - B(u_h^m)\right\|^2_{L^2}\right]\\\nonumber
	&\leq
	C\mE\left[
	\mathbf 1_{\Omega_{\rho,m}}
	\mathcal{E}_m
	\right]
	+
	Ck(1+ \sqrt{\rho}+ \rho)
	\mE\left[
	\mathbf 1_{\Omega_{\rho,m}}
	\|\varepsilon_u^{m+1}\|_{L^2}^{2} + \mathbf 1_{\Omega_{\rho,m}}
	\|\varepsilon_u^{m}\|_{L^2}^{2} 
	\right] \\\nonumber
	&\qquad+ CC_B\mE\left[\mathbf 1_{\Omega_{\rho,m}}\int_{t_m}^{t_{m+1}} \|u(s) - u(t_{m})\|^2_{L^2}\, ds \right]\\\nonumber
	&\qquad+CC_Bk\mE\left[\mathbf 1_{\Omega_{\rho,m}}\left\|\varepsilon_{u}^m\right\|^2_{L^2}\right] + CC_Bkh^2\mE\left[\mathbf 1_{\Omega_{\rho,m}}\left\|u(t_m)\right\|^2_{H^1}\right]
\end{align}
where \begin{align*}
	\mathcal{E}_m &= h^2 \int_{t_{m}}^{t_{m+1}}\|u(s)\|^2_{H^2}\, ds + \int_{t_{m}}^{t_{m+1}}\|\nab(u(t_{m+1}) - u(s))\|^2_{L^2}\, ds\\\nonumber
	&\qquad +\int_{t_m}^{t_{m+1}}\left[\|u(s)\|^2_{L^4}\|\vsigma(s) - \vsigma(t_{m+1})\|^2_{L^4} + \|u(s) - u(t_{m+1})\|^2_{L^4}\|u(t_{m+1})\|^2_{L^2}\right]\, ds \\\nonumber
	&\qquad+ k\|u(t_{m+1})\|^2_{L^4}\|u(t_{m+1}) - u(t_m)\|^2_{L^2}  \\\nonumber
	&\qquad+ 4\rho k \|u(t_{m+1}) - u(t_m)\|^2_{L^4} + Ckh^2\|u(t_{m+1})\|^2_{L^4}\|u(t_{m})\|^2_{H^1} \\\nonumber
	&\qquad+  kh^3\|u(t_{m+1})\|^2_{L^4}\|u(t_{m+1})\|^2_{H^1} +  h^3\int_{t_{m}}^{t_{m+1}}\rho\|u(s)\|^2_{H^2}\, ds \\\nonumber
	&\qquad+ \rho\int_{t_{m}}^{t_{m+1}} \|\nab(u(t_{m+1})- u(s))\|^2_{L^2}\, ds\\\nonumber
	&\qquad + \int_{t_m}^{t_{m+1}}
	\|u(s)-u(t_{m+1})\|^2_{L^2} (1 + \|u(s)\|^4_{H^1} + \|u(t_{m+1})\|^4_{H^1})\,ds
	\nonumber\\
	&\qquad
	+
	Ckh^2
	\|u(t_{m+1})\|^2_{H^1}(1 + \|u(t_{m+1})\|^4_{H^1}).
\end{align*}

For $k>0$ sufficiently small such that
\[
Ck(1+\sqrt{\rho} +\rho)\le \frac12,
\]
the last term on the right-hand side can be absorbed into the left-hand side.
Therefore,
\begin{align}
	&
	\mE\left[
	\mathbf 1_{\Omega_{\rho,m+1}}
	\|\varepsilon_u^{m+1}\|_{L^2}^{2}
	\right]
	-
	\mE\left[
	\mathbf 1_{\Omega_{\rho,m}}
	\|\varepsilon_u^{m}\|_{L^2}^{2}
	\right]
	+
	 k
	\mE\left[
	\mathbf 1_{\Omega_{\rho,m}}
	\|\nabla\varepsilon_u^{m+1}\|_{L^2}^{2}
	\right]
	\nonumber\\
	&\le
	C\mE\left[
	\mathcal{E}_m
	\right]
	+
	Ck(1+\sqrt{\rho}+\rho)
	\mE\left[
	\mathbf 1_{\Omega_{\rho,m}}
	\|\varepsilon_u^{m}\|_{L^2}^{2}
	\right].
\end{align}

Summing the above inequality from $m=0$ to $\ell$ gives
\begin{align}
	&
	\mE\left[
	\mathbf 1_{\Omega_{\rho,\ell+1}}
	\|\varepsilon_u^{\ell+1}\|_{L^2}^{2}
	\right]
	+
	k
	\sum_{m=0}^{\ell}
	\mE\left[
	\mathbf 1_{\Omega_{\rho,m}}
	\|\nabla\varepsilon_u^{m+1}\|_{L^2}^{2}
	\right]
	\nonumber\\
	&\le
	C\sum_{m=0}^{\ell}
	\mE\left[
	\mathcal{E}_m
	\right]
	+
	Ck(1+ \sqrt{\rho}+ \rho)
	\sum_{m=0}^{\ell}
	\mE\left[
	\mathbf 1_{\Omega_{\rho,m}}
	\|\varepsilon_u^{m}\|_{L^2}^{2}
	\right].
\end{align}

Now, we analyze the terms in $\mathcal{E}_m$ as follows: 
\begin{align*}
\sum_{m=0}^{\ell}
\mE\left[
\mathcal{E}_m
\right] &=T_1 + T_2,
\end{align*}
where 
\begin{align*}
	T_1 &=\sum_{m=0}^{\ell}\mE\Biggl[h^2 \int_{t_{m}}^{t_{m+1}}\|u(s)\|^2_{H^2}\, ds + Ckh^2\|u(t_{m+1})\|^2_{L^4}\|u(t_{m})\|^2_{H^1} \\\nonumber
	&\qquad+  kh^3\|u(t_{m+1})\|^2_{L^4}\|u(t_{m+1})\|^2_{H^1} +  h^3\int_{t_{m}}^{t_{m+1}}\rho\|u(s)\|^2_{H^2}\, ds 
	\\\nonumber
	&\qquad
	+
	Ckh^2
	\|u(t_{m+1})\|^2_{H^1}(1 + \|u(t_{m+1})\|^4_{H^1})\Biggr],\\\nonumber
	T_2&=\sum_{m=0}^{\ell}
	\mE\Biggl[ \int_{t_{m}}^{t_{m+1}}\|\nab(u(t_{m+1}) - u(s))\|^2_{L^2}\, ds\\\nonumber
	&\qquad +\int_{t_m}^{t_{m+1}}\left[\|u(s)\|^2_{L^4}\|\vsigma(s) - \vsigma(t_{m+1})\|^2_{L^4} + \|u(s) - u(t_{m+1})\|^2_{L^4}\|u(t_{m+1})\|^2_{L^2}\right]\, ds \\\nonumber
	&\qquad+ k\|u(t_{m+1})\|^2_{L^4}\|u(t_{m+1}) - u(t_m)\|^2_{L^2}  + 4\rho k \|u(t_{m+1}) - u(t_m)\|^2_{L^4}  \\\nonumber
	&\qquad+ \rho\int_{t_{m}}^{t_{m+1}} \|\nab(u(t_{m+1})- u(s))\|^2_{L^2}\, ds\\\nonumber
	&\qquad + \int_{t_m}^{t_{m+1}}
	\|u(s)-u(t_{m+1})\|^2_{L^2} (1 + \|u(s)\|^4_{H^1} + \|u(t_{m+1})\|^4_{H^1})\,ds
	\Biggr].
\end{align*}

Using the Sobolev embedding inequality $\|u\|_{L^p} \leq C\|u\|_{H^1}$ for any $1\geq p <\infty$, and the hypothesis of the solution $u\in L^2(\Omega;L^{\infty}(0,T;H^2(D)))\cap L^6(\Ome;L^{\infty}(0,T;H^1(D)))$, we obtain
\begin{align*}
	T_1 & \leq Ch^2\mE\left[\sup_{s\in[0,T]}\|u(s)\|^2_{H^2} + \sup_{s\in[0,T]}\|u(s)\|^6_{H^1}\right] \leq Ch^2.
\end{align*}

Using the Cauchy-Schwarz inequality and Lemma \ref{Lemma_Holder}, we also obtain
\begin{align*}
	T_2&\leq \sum_{m=0}^{\ell}
\Biggl( \int_{t_{m}}^{t_{m+1}}\mE\left[\|\nab(u(t_{m+1}) - u(s))\|^2_{L^2}\right]\, ds\\\nonumber
	&\qquad +\int_{t_m}^{t_{m+1}}\left(\mE\left[\|u(s)\|^4_{L^4}\right]\right)^{\frac12}\left(\mE\left[\|\vsigma(s) - \vsigma(t_{m+1})\|^4_{L^4}\right]\right)^{\frac12}\, ds \\\nonumber
	&\qquad+\int_{t_m}^{t_{m+1}} \left(\mE\left[\|u(s) - u(t_{m+1})\|^4_{L^4}\right]\right)^{\frac12} \left(\mE\left[\|u(t_{m+1})\|^4_{L^2}\right]\right)^{\frac12}\, ds \\\nonumber
	&\qquad+ k\left(\mE\left[\|u(t_{m+1})\|^4_{L^4}\right]\right)^{\frac12}\left(\mE\left[\|u(t_{m+1}) - u(t_m)\|^4_{L^2}\right]\right)^{\frac12}  \\\nonumber
	&\qquad+ 4\rho k \mE\left[\|u(t_{m+1}) - u(t_m)\|^2_{L^4}\right]+ \rho\int_{t_{m}}^{t_{m+1}}\mE\left[\|\nab(u(t_{m+1})- u(s))\|^2_{L^2}\right]\, ds\\\nonumber
	&\qquad + \int_{t_m}^{t_{m+1}}
	\left(\mE\left[\|u(s)-u(t_{m+1})\|^4_{L^2}\right]\right)^{\frac12} \left(\mE\left[(1 + \|u(s)\|^4_{H^1} + \|u(t_{m+1})\|^4_{H^1})^2\right]\right)^{\frac12}\,ds
	\Biggr)\\\nonumber
	&\leq Ck + Ck \sup_{s\in[0,T]}\left\{\left(\mE\left[\|u(s)\|^4_{L^4}\right]\right)^{\frac12} + \left(\mE\left[\|u(s)\|^8_{L^4}\right]\right)^{\frac12}\right\} \leq Ck.
\end{align*}

Hence,
\begin{align}
	&
	\mE\left[
	\mathbf 1_{\Omega_{\rho,\ell+1}}
	\|\varepsilon_u^{\ell+1}\|_{L^2}^{2}
	\right]
	+
	 k
	\sum_{m=0}^{\ell}
	\mE\left[
	\mathbf 1_{\Omega_{\rho,m}}
	\|\nabla\varepsilon_u^{m+1}\|_{L^2}^{2}
	\right]
	\nonumber\\
	&\le
	C(h^2+k)
	+
	Ck(1+\sqrt{\rho}+\rho)
	\sum_{m=0}^{\ell}
	\mE\left[
	\mathbf 1_{\Omega_{\rho,m}}
	\|\varepsilon_u^{m}\|_{L^2}^{2}
	\right].
\end{align}

Applying the discrete Gronwall inequality yields
\begin{align}
	\mE\left[
	\mathbf 1_{\Omega_{\rho,\ell +1}}
	\|\varepsilon_u^{\ell +1}\|_{L^2}^{2}
	\right]
	+
	k
	\sum_{m=0}^{\ell}
	\mE\left[
	\mathbf 1_{\Omega_{\rho,m}}
	\|\nabla\varepsilon_u^{m+1}\|_{L^2}^{2}
	\right]
	&\le
	Ce^{C(1+ \sqrt{\rho}+ \rho)T}(h^2+k)\\\nonumber
&	\le
	C\ln(1/k)(h^2+k).
\end{align}

The proof is complete by using the triangle inequality and \eqref{ineq_interpolation}.

	\end{proof}
	
	\begin{corollary}\label{cor:probability}
		Under the assumptions of Theorem~\ref{Thm_global_error}, the numerical solutions
		$\{(u_h^m,\vsigma_h^m,v_h^m)\}_{m=1}^M$
		converge in probability. More precisely, for every
		$\alpha\in(0,\frac12)$, there holds
		\begin{align*}
			\lim_{C\to\infty}
			\lim_{k,h\to0}
			\mathbb P
			\Bigg(
			\max_{1\le m\le M}
			\|u(t_m)-u_h^m\|_{L^2}
			+
			\Bigg(
			k\sum_{m=1}^{M}
			\|\nabla(u(t_m)-u_h^m)\|_{L^2}^{2}
			\Bigg)^{\frac12}
			\ge
			C(k+h^2)^{\alpha}
			\Bigg)
			=0,
		\end{align*}
		provided that the localization parameter $\rho$ is chosen sufficiently large.
		Furthermore,
		\begin{align*}
			\lim_{C\to\infty}
			\lim_{k,h\to0}
			\mathbb P
			\Bigg(
			\max_{1\le m\le M}
			\|\vsigma(t_m)-\vsigma_h^m\|_{H^1}
			\ge
			C(k+h^2)^{\alpha}
			\Bigg)
			=0,
		\end{align*}
		and
		\begin{align*}
			\lim_{C\to\infty}
			\lim_{k,h\to0}
			\mathbb P
			\Bigg(
			\max_{1\le m\le M}
			\|v(t_m)-v_h^m\|_{L^2}
			\ge
			C(k+h^2)^{\alpha}
			\Bigg)
			=0.
		\end{align*}
	\end{corollary}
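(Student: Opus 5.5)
The plan is the standard localization-plus-Markov splitting. Fix $\alpha\in(0,\frac12)$ and write $E_{k,h}$ for the random quantity inside the first probability, i.e.
\[
E_{k,h}:=\max_{1\le m\le M}\|u(t_m)-u_h^m\|_{L^2}+\Bigl(k\sum_{m=1}^M\|\nabla(u(t_m)-u_h^m)\|_{L^2}^2\Bigr)^{1/2}.
\]
First split according to the localization set $\Omega_{\rho,M}$:
\[
\mathbb P\bigl(E_{k,h}\ge C(k+h^2)^\alpha\bigr)\le \mathbb P\bigl(\{E_{k,h}\ge C(k+h^2)^\alpha\}\cap\Omega_{\rho,M}\bigr)+\mathbb P(\Omega_{\rho,M}^c).
\]
For the second term, use the Markov inequality together with Lemma~\ref{Stability_PDE} and Lemma~\ref{lemma_fem_stability}, exactly as in the remark preceding Theorem~\ref{Thm_global_error}. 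This gives $\mathbb P(\Omega_{\rho,M}^c)\le C/\rho$, which tends to $0$ as $\rho\to\infty$. With $\rho=\rho(k)=\ln\ln(1/k)/\widehat C$, it vanishes as $k\to0$. This is where the phrase ``$\rho$ chosen sufficiently large'' enters.

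For the first term, use that $\Omega_{\rho,M}\subset\Omega_{\rho,m}$ for every $m\le M$, so $\mathbf 1_{\Omega_{\rho,M}}\le\mathbf 1_{\Omega_{\rho,m}}$. The $\max_m$ sits inside the probability, while Theorem~\ref{Thm_global_error} controls $\max_m\mathbb E[\cdot]$. So I would go back to the proof of the theorem and upgrade the estimate to $\mathbb E[\mathbf 1_{\Omega_{\rho,M}}\max_m\|\varepsilon_u^m\|^2_{L^2}]$. To do this, sum the localized energy inequality before taking expectations, take the maximum over $\ell$, and treat the martingale term $\sum_m\bigl(\int_{t_m}^{t_{m+1}}(B(u)-B(u_h^m))\,dW,\varepsilon_u^m\bigr)\mathbf 1_{\Omega_{\rho,m}}$ with the discrete Burkholder--Davis--Gundy inequality and Young's inequality. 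Half of the maximal term gets absorbed into the left-hand side, and the remainder is handled by Gronwall as before. The result should be
\[
\mathbb E\bigl[\mathbf 1_{\Omega_{\rho,M}}E_{k,h}^2\bigr]\le C\ln(1/k)(k+h^2),
\]
after adding back the projection errors via \eqref{ineq_interpolation}. If this maximal upgrade proves delicate, a fallback is to use the theorem as stated and accept the extra union-type loss, but I expect the BDG route to be clean.

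Then apply Chebyshev's inequality:
\[
\mathbb P\bigl(\{E_{k,h}\ge C(k+h^2)^\alpha\}\cap\Omega_{\rho,M}\bigr)\le\frac{C_*\ln(1/k)(k+h^2)}{C^2(k+h^2)^{2\alpha}}=\frac{C_*}{C^2}\ln(1/k)(k+h^2)^{1-2\alpha}.
\]
Since $\alpha<\frac12$, the factor $(k+h^2)^{1-2\alpha}$ beats the logarithm: $\ln(1/k)\,k^{1-2\alpha}\to0$, and the $h^2$ part is likewise fine. So this term tends to $0$ as $k,h\to0$ for every fixed $C$, and the outer limit in $C$ is then trivial. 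The main subtlety is exactly this logarithmic loss from $e^{C\rho}$: it is the reason the exponent must be strictly below $\frac12$.

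The statements for $\vsigma$ and $v$ follow the same way. Estimate \eqref{eq3.32}, with the time-Hölder bound of Lemma~\ref{Lemma_Holder}, controls $\|\varepsilon_{\vsigma}^{m}\|_{H^1}$ by $\|\varepsilon_u^{m-1}\|_{L^2}$ plus consistency terms of order $k+h^2$. The discrete relation \eqref{eq_discrete_c} controls the $v$-error by the $\vsigma$- and $u$-errors. The same localized maximal bound, Chebyshev's inequality and the bound on $\mathbb P(\Omega_{\rho,M}^c)$ then give the last two limits.
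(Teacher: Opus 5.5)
Your proposal follows the paper's proof. It splits on $\Omega_{\rho,M}$, bounds $\mathbb P(\Omega\setminus\Omega_{\rho,M})$ by Markov's inequality, and bounds the localized event by Chebyshev's inequality together with Theorem~\ref{Thm_global_error}.

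Where you differ, you are more careful than the paper. Its proof simply asserts $\mathbb E[\mathbf 1_{\Omega_{\rho,M}}E_k^2]\le C\ln(1/k)(k+h^2)$ ``by the theorem''. The theorem, however, only controls $\max_m\mathbb E[\mathbf 1_{\Omega_{\rho,m}}\|u(t_m)-u_h^m\|_{L^2}^2]$, not the expectation of the maximum. Your repair works: sum the localized inequality pathwise, maximize before taking expectations, and treat the martingale term by BDG and Young. This is legitimate because its coefficients $\mathbf 1_{\Omega_{\rho,m}}$, $\varepsilon_u^m$, $u_h^m$ are $\mathcal F_{t_m}$-measurable. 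The repair is also necessary. Your fallback, a union bound over $m$ using the theorem as stated, costs a factor $M=T/k$ and leaves $\frac{T}{k}\ln(1/k)(k+h^2)^{1-2\alpha}$, which does not tend to zero.

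The same BDG argument is what actually justifies the bound on $\mathbb E[\max_n\|u_h^n\|_{L^2}^2]$ used in your Markov step. The proof of Lemma~\ref{lemma_fem_stability}(a) also takes expectations before the maximum.

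Two details in the maximal version need care:
\begin{enumerate}
\item Absorb the terms $Ck(1+\rho)\,\mathbf 1_{\Omega_{\rho,m}}\|\varepsilon_u^{m+1}\|_{L^2}^2$ before replacing $\mathbf 1_{\Omega_{\rho,m}}$ by the smaller indicator $\mathbf 1_{\Omega_{\rho,m+1}}$.
\item For $\vsigma$, the quantity $\mathbb E[\max_m\|u(t_m)-u(t_{m-1})\|_{L^2}^2]$ is not $O(k)$. Using $2p$-th moments from Lemma~\ref{Lemma_Holder}(a) it is only $O(k^{1-1/p})$, so you must take $p$ with $1-1/p>2\alpha$.
\end{enumerate}

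One step, which you share with the paper, fails as written. With $\rho=\ln\ln(1/k)/\widehat C$ your Chebyshev bound is $\frac{C_*}{C^2}\ln(1/k)(k+h^2)^{1-2\alpha}$. The claim that ``the $h^2$ part is likewise fine'' is false under an unrestricted joint limit $k,h\to0$: for $k=e^{-1/h^2}$ this quantity behaves like $h^{-4\alpha}\to\infty$.

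The fix is to decouple $\rho$ from $k$, which is what the paper's final ``let $\rho\to\infty$'' implicitly does. For fixed $\rho$, the proof of the theorem gives a constant $C_\rho$ (of size roughly $e^{C\rho}$) for $k\le k_1(\rho)$. Hence $\mathbb P\bigl(E_{k,h}\ge C(k+h^2)^\alpha\bigr)\le C_\rho C^{-2}(k+h^2)^{1-2\alpha}+C'/\rho$. Now let $k,h\to0$ and then $\rho\to\infty$. Alternatively, choose $\rho$ as a function of $k+h^2$ rather than of $k$ alone.
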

	\begin{proof}
		We only prove the first assertion, since the proofs of the remaining two assertions are identical.
		
		Let
		\[
		E_k:=
		\max_{1\le m\le M}
		\|u(t_m)-u_h^m\|_{L^2}
		+
		\left(
		k\sum_{m=1}^{M}
		\|\nabla(u(t_m)-u_h^m)\|_{L^2}^{2}
		\right)^{\frac12}.
		\]
		Then
		\begin{align*}
			\mathbb P
			\Bigl(
			E_k
			\ge
			C(k+h^2)^{\alpha}
			\Bigr)
			&\le
			\mathbb P
			\Bigl(
			E_k
			\ge
			C(k+h^2)^{\alpha},
			\,
			\Omega_{\rho,M}
			\Bigr)
			+
			\mathbb P(\Omega\setminus\Omega_{\rho,M}).
		\end{align*}
		Applying Chebyshev's inequality together with Theorem~\ref{Thm_global_error}, we obtain
		\begin{align*}
			\mathbb P
			\Bigl(
			E_k
			\ge
			C(k+h^2)^{\alpha},
			\,
			\Omega_{\rho,M}
			\Bigr)
			&\le
			\frac{
				\mathbb E
				\!\left[
				\mathbf 1_{\Omega_{\rho,M}}
				E_k^2
				\right]
			}
			{C^2(k+h^2)^{2\alpha}}
			\\
			&\le
			\frac{
				C\ln(1/k)(k+h^2)
			}
			{C^2(k+h^2)^{2\alpha}}
			\\
			&=
			\frac{C}{C^2}
			\ln(1/k)
			(k+h^2)^{1-2\alpha}.
		\end{align*}
		Since $\alpha<\frac12$,
		\[
		\lim_{k,h\to0}
		\ln(1/k)(k+h^2)^{1-2\alpha}
		=0.
		\]
		Therefore,
		\[
		\lim_{k,h\to0}
		\mathbb P
		\Bigl(
		E_k
		\ge
		C(k+h^2)^{\alpha}
		\Bigr)
		\le
		\mathbb P(\Omega\setminus\Omega_{\rho,M}).
		\]
		Finally, letting $\rho\to\infty$ yields
		\[
		\lim_{\rho\to\infty}
		\mathbb P(\Omega\setminus\Omega_{\rho,M})
		=0,
		\]
		and hence
		\[
		\lim_{C\to\infty}
		\lim_{k,h\to0}
		\mathbb P
		\Bigl(
		E_k
		\ge
		C(k+h^2)^{\alpha}
		\Bigr)
		=0.
		\]
		This proves the first assertion. The remaining two assertions follow by applying the error estimates
		\eqref{estimate_sigma_fem} and \eqref{estimate_c_fem} in exactly the same way.
	\end{proof}

	\section{Numerical experiments}\label{sec5}

In this section, we investigate the numerical performance of the proposed splitting finite element method and verify the theoretical error estimates established in Theorem~\ref{Thm_global_error}. Throughout all experiments in Tests 1 and 2, the computational domain is taken as $D=(0,1)^2\subset\mathbb{R}^2$, the final time is $T=1$, and the chemotactic sensitivity parameter is $\chi=1$. Unless otherwise specified, the multiplicative noise is chosen as $B(u)=\frac12u$. The stochastic forcing is driven by a real-valued Wiener process $W(t)$ in \eqref{eq1.1}. To ensure consistency across different temporal discretizations, all Brownian sample paths are generated using the finest time step $k_0=2^{-11}$ and subsequently reused on all coarser temporal grids. The spatial discretization is performed using continuous piecewise linear ($P_1$) finite element spaces for all unknowns, and statistical expectations are approximated by the standard Monte Carlo method with $J =400$ independent realizations. Unless otherwise specified, the initial condition in Tests~1 and~2 is chosen as $u_0(x,y)=\sin(\pi x)\sin(\pi y)$.

Since analytical solutions of \eqref{eq1.1} are unavailable, the numerical errors are computed by comparing the finite element approximation $(\vsigma_h^m(\omega_j),u_h^m(\omega_j),v_h^m(\omega_j))$ with a reference approximation
\[
(\vsigma_{\rm ref}^m(\omega_j),u_{\rm ref}^m(\omega_j),v_{\rm ref}^m(\omega_j)),
\]
computed on a sufficiently refined spatial mesh and temporal partition. The construction of the reference approximation will be specified in each individual experiment.

To quantify the numerical accuracy, we employ the following discrete approximations of the strong error norms appearing in Theorem~\ref{Thm_global_error}:
\begin{align*}
	L^2_{\omega}L^{\infty}_tL^2_x(u)
	&:=
	\Bigl(
	\mathbb E
	\Bigl[
	\max_{1\le m\le M}
	\|u(t_m)-u_h^m\|_{L^2}^2
	\Bigr]
	\Bigr)^{1/2}
	\\
	&\approx
	\left(
	\frac1J
	\sum_{j=1}^{J}
	\max_{1\le m\le M}
	\|u_{\rm ref}^m(\omega_j)-u_h^m(\omega_j)\|_{L^2}^2
	\right)^{1/2},
	\\[1ex]
	L^2_{\omega}L^{2}_tH^1_x(u)
	&:=
	\left(
	\mathbb E
	\left[
	k\sum_{m=1}^{M}
	\|\nabla(u(t_m)-u_h^m)\|_{L^2}^{2}
	\right]
	\right)^{1/2}
	\\
	&\approx
	\left(
	\frac1J
	\sum_{j=1}^{J}
	k\sum_{m=1}^{M}
	\|\nabla(u_{\rm ref}^m(\omega_j)-u_h^m(\omega_j))\|_{L^2}^{2}
	\right)^{1/2},
	\\[1ex]
	L^2_{\omega}L^{\infty}_tL^2_x(v)
	&:=
	\Bigl(
	\mathbb E
	\Bigl[
	\max_{1\le m\le M}
	\|v(t_m)-v_h^m\|_{L^2}^{2}
	\Bigr]
	\Bigr)^{1/2}
	\\
	&\approx
	\left(
	\frac1J
	\sum_{j=1}^{J}
	\max_{1\le m\le M}
	\|v_{\rm ref}^m(\omega_j)-v_h^m(\omega_j)\|_{L^2}^{2}
	\right)^{1/2},
	\\[1ex]
	L^2_{\omega}L^{\infty}_tH^1_x(\vsigma)
	&:=
	\Bigl(
	\mathbb E
	\Bigl[
	\max_{1\le m\le M}
	\|\vsigma(t_m)-\vsigma_h^m\|_{H^1}^{2}
	\Bigr]
	\Bigr)^{1/2}
	\\
	&\approx
	\left(
	\frac1J
	\sum_{j=1}^{J}
	\max_{1\le m\le M}
	\|\vsigma_{\rm ref}^m(\omega_j)-\vsigma_h^m(\omega_j)\|_{H^1}^{2}
	\right)^{1/2},
\end{align*}
where $J$ denotes the number of Monte Carlo samples.

{\bf Test 1.}
In this experiment, we verify the global convergence rates predicted by Theorem~\ref{Thm_global_error}. Since the theoretical estimate is of order $\mathcal{O}(k^{1/2}+h)$, we choose the time step according to $k=h^2$ so that the temporal and spatial discretization errors are balanced, yielding an overall convergence rate of $\mathcal{O}(h)$.

The proposed algorithm is implemented on the sequence of meshes $h=2^{-1},2^{-2},2^{-3},2^{-4}$ with the corresponding time step sizes $k=2^{-2},2^{-4},2^{-6},2^{-8}$. As the exact solution is unavailable, the reference approximation is computed on the refined mesh $(h_{\rm ref},k_{\rm ref})=(h/2,k/4)$, and the numerical errors are evaluated by comparing the computed solutions on two consecutive levels of spatial and temporal refinements. The resulting nonlinear algebraic systems are solved by a fixed-point iteration, which has been widely used for stochastic nonlinear partial differential equations; see, for example, \cite{vo2025analysis,feng2024high,nguyen2026finite,nguyen2025fully,vo2026mixed}. Compared with the classical Newton method, the fixed-point iteration is computationally more efficient while providing satisfactory convergence for the problems considered here.

The computed errors together with the corresponding experimental convergence orders are reported in Table~\ref{tab:test1}. The numerical results show that the discrete $L^2_{\omega}L_t^{2}H_x^1(u)$- and $L^2_{\omega}L_t^{\infty}H_x^1(\vsigma)$-errors converge with approximately first-order accuracy, in excellent agreement with the theoretical prediction of Theorem~\ref{Thm_global_error}. In contrast, the discrete $L^2_{\omega}L_t^{\infty}L_x^2(u)$- and $L^2_{\omega}L_t^{\infty}L_x^2(v)$-errors exhibit convergence rates noticeably higher than one. Since the choice $k=h^2$ balances the temporal and spatial discretization errors, it is difficult to determine whether this improved convergence is due to a higher-order spatial approximation. This observation motivates the next experiment, in which the temporal error is made sufficiently small so that the spatial convergence can be investigated independently.
\begin{table}[ht]
	\begin{tabular}{c|ccc|ccc}
		\hline
		\multirow{2}{*}{Error}
		& \multicolumn{3}{c|}{Spatial convergence}
		& \multicolumn{3}{c}{Temporal convergence} \\
		\cline{2-7}
		& $h$ & Error & Order
		& $k$ & Error & Order \\
		\hline
		
		\multirow{4}{*}{$L^2_{\omega}L_t^{\infty}L_x^2(u)$}
		& $2^{-1}$ & $1.04546\times10^{-2}$ & --    & $2^{-2}$ & $1.04546\times10^{-2}$ & --    \\
		& $2^{-2}$ & $3.78230\times10^{-3}$ & 1.467 & $2^{-4}$ & $3.78230\times10^{-3}$ & 0.734 \\
		& $2^{-3}$ & $1.11964\times10^{-3}$ & 1.756 & $2^{-6}$ & $1.11964\times10^{-3}$ & 0.878 \\
		& $2^{-4}$ & $3.50000\times10^{-4}$ & 1.678 & $2^{-8}$ & $3.50000\times10^{-4}$ & 0.839 \\
		\hline
		
		\multirow{4}{*}{$L^2_{\omega}L_t^{2}H_x^1(u)$}
		& $2^{-1}$ & $1.01138$ & --    & $2^{-2}$ & $1.01138$ & --    \\
		& $2^{-2}$ & $6.10662\times10^{-1}$ & 0.728 & $2^{-4}$ & $6.10662\times10^{-1}$ & 0.364 \\
		& $2^{-3}$ & $3.21143\times10^{-1}$ & 0.927 & $2^{-6}$ & $3.21143\times10^{-1}$ & 0.463 \\
		& $2^{-4}$ & $1.63272\times10^{-1}$ & 0.976 & $2^{-8}$ & $1.63272\times10^{-1}$ & 0.488 \\
		\hline
		
		\multirow{4}{*}{$L^2_{\omega}L_t^{\infty}L_x^2(v)$}
		& $2^{-1}$ & $1.43928\times10^{-1}$ & --    & $2^{-2}$ & $1.43928\times10^{-1}$ & --    \\
		& $2^{-2}$ & $4.18400\times10^{-2}$ & 1.782 & $2^{-4}$ & $4.18400\times10^{-2}$ & 0.891 \\
		& $2^{-3}$ & $1.36386\times10^{-2}$ & 1.617 & $2^{-6}$ & $1.36386\times10^{-2}$ & 0.808 \\
		& $2^{-4}$ & $4.92915\times10^{-3}$ & 1.468 & $2^{-8}$ & $4.92915\times10^{-3}$ & 0.734 \\
		\hline
		
		\multirow{4}{*}{$L^2_{\omega}L_t^{\infty}H_x^1(\vsigma)$}
		& $2^{-1}$ & $2.28653\times10^{-1}$ & --    & $2^{-2}$ & $2.28653\times10^{-1}$ & --    \\
		& $2^{-2}$ & $1.25449\times10^{-1}$ & 0.866 & $2^{-4}$ & $1.25449\times10^{-1}$ & 0.433 \\
		& $2^{-3}$ & $7.15228\times10^{-2}$ & 0.811 & $2^{-6}$ & $7.15228\times10^{-2}$ & 0.405 \\
		& $2^{-4}$ & $4.22027\times10^{-2}$ & 0.761 & $2^{-8}$ & $4.22027\times10^{-2}$ & 0.380 \\
		\hline
	\end{tabular}
	\medskip
	\centering
	\caption{Discrete strong errors and corresponding experimental convergence orders for Test~1 with $k=h^2$.}
	\label{tab:test1}
\end{table}

{\bf Test 2.}
In this experiment, we investigate the spatial convergence behavior of the proposed method. To minimize the influence of the temporal discretization error, the time step is fixed at $k=2^{-9}$ while only the spatial mesh is refined with $h=2^{-1},2^{-2},2^{-3},2^{-4}$. As in Test~1, the reference approximation is computed on the refined mesh $h_{\rm ref} =h/2$.

The computed spatial errors together with the corresponding experimental convergence orders are reported in Table~\ref{tab:spatial}. The results show that the $L^2_{\omega}L_t^{2}H_x^1(u)$- and $L^2_{\omega}L_t^{\infty}H_x^1(\vsigma)$-errors converge with approximately first-order accuracy, which is fully consistent with the theoretical prediction of Theorem~\ref{Thm_global_error} and the results from Test 1. In contrast, the $L^2_{\omega}L_t^{\infty}L_x^2(u)$- and $L^2_{\omega}L_t^{\infty}L_x^2(v)$-errors exhibit convergence rates approaching two. Although Theorem~\ref{Thm_global_error} guarantees only first-order convergence in these norms, the numerical evidence indicates that the optimal spatial convergence rate may be second order. This observation suggests that the current theoretical analysis could potentially be sharpened by employing an Aubin--Nitsche duality argument to derive improved $L^2$-error estimates.

\begin{table}[ht]
	\begin{tabular}{c|ccc}
		\hline
		\multirow{2}{*}{Error}
		& \multicolumn{3}{c}{Spatial convergence} \\
		\cline{2-4}
		& $h$ & Error & Order \\
		\hline
		
		\multirow{4}{*}{$L^2_{\omega}L_t^{\infty}L_x^2(u)$}
		& $2^{-1}$ & $1.28444\times10^{-2}$ & --    \\
		& $2^{-2}$ & $4.68667\times10^{-3}$ & 1.454 \\
		& $2^{-3}$ & $1.37100\times10^{-3}$ & 1.773 \\
		& $2^{-4}$ & $3.64000\times10^{-4}$ & 1.913 \\
		\hline
		
		\multirow{4}{*}{$L^2_{\omega}L_t^{2}H_x^1(u)$}
		& $2^{-1}$ & $1.01358$ & --    \\
		& $2^{-2}$ & $6.14774\times10^{-1}$ & 0.722 \\
		& $2^{-3}$ & $3.22928\times10^{-1}$ & 0.929 \\
		& $2^{-4}$ & $1.63491\times10^{-1}$ & 0.982 \\
		\hline
		
		\multirow{4}{*}{$L^2_{\omega}L_t^{\infty}L_x^2(v)$}
		& $2^{-1}$ & $1.05227\times10^{-1}$ & --    \\
		& $2^{-2}$ & $3.74714\times10^{-2}$ & 1.490 \\
		& $2^{-3}$ & $1.24651\times10^{-2}$ & 1.588 \\
		& $2^{-4}$ & $3.67386\times10^{-3}$ & 1.763 \\
		\hline
		
		\multirow{4}{*}{$L^2_{\omega}L_t^{\infty}H_x^1(\vsigma)$}
		& $2^{-1}$ & $1.86350\times10^{-1}$ & --    \\
		& $2^{-2}$ & $1.22084\times10^{-1}$ & 0.610 \\
		& $2^{-3}$ & $7.11920\times10^{-2}$ & 0.778 \\
		& $2^{-4}$ & $4.22000\times10^{-2}$ & 0.755 \\
		\hline

	\end{tabular}
	\medskip
		\centering
	\caption{Discrete strong errors and experimental spatial convergence orders for Test~2 with the fixed time step $k=2^{-9}$.}
	\label{tab:spatial}
\end{table}

{\bf Test 3.}
In this experiment, we investigate the qualitative behavior of the stochastic Keller--Segel system \eqref{eq1.1} by applying the proposed splitting finite element method. We choose $B(u)=2u$, $\chi=5$, and employ a uniform mesh with mesh size $h=1/40$. The solutions are computed at the final times $T=0.005$, $0.01$, $0.02$, and $0.05$. The computational domain is taken as $D=\left[-\frac12,\frac12\right]\times\left[-\frac12,\frac12\right]$, and the initial condition is chosen as $u_0(x,y)=1+2e^{-40x^2-40y^2}$.

We compute the expected value of the numerical solution using the Monte Carlo method together with three representative sample paths. For comparison, we also compute the corresponding deterministic solution by setting $B(u)=0$.

Figures~\ref{fig5.5}--\ref{fig5.9} illustrate the evolution of the expected solution, the deterministic solution, and three representative sample paths. We observe that all computed solutions remain bounded over the entire simulation interval and no blow-up is observed. This behavior is consistent with the global well-posedness result established in \cite{chen2025well}, where the logistic growth term suppresses excessive chemotactic aggregation and guarantees global solutions. In contrast, for stochastic Keller--Segel equations without the logistic damping term, finite-time blow-up may occur even for nontrivial initial data under linear multiplicative noise \cite{misiats2022global}. Although the two models are different, the present numerical results clearly illustrate the regularizing effect of the logistic term in preventing the excessive aggregation induced by chemotaxis and stochastic perturbations.

Figures~\ref{fig5.5} and~\ref{fig5.6} show that the expected solution and the deterministic solution exhibit very similar qualitative behavior. In particular, the peak value decreases monotonically as the final time increases from $T=0.005$ to $T=0.05$, demonstrating the stabilizing influence of the logistic damping. In contrast, the individual sample paths displayed in Figures~\ref{fig5.7}--\ref{fig5.9} exhibit noticeably different transient dynamics. Depending on the realization of the Wiener process, some sample paths produce stronger chemotactic aggregation than the deterministic solution, whereas others exhibit a faster decay of the peak. These observations demonstrate the variability introduced by the multiplicative stochastic forcing, while the expected solution retains the overall qualitative behavior of the deterministic model.

	\begin{figure}[htp]
	\includegraphics[width=.45\textwidth]{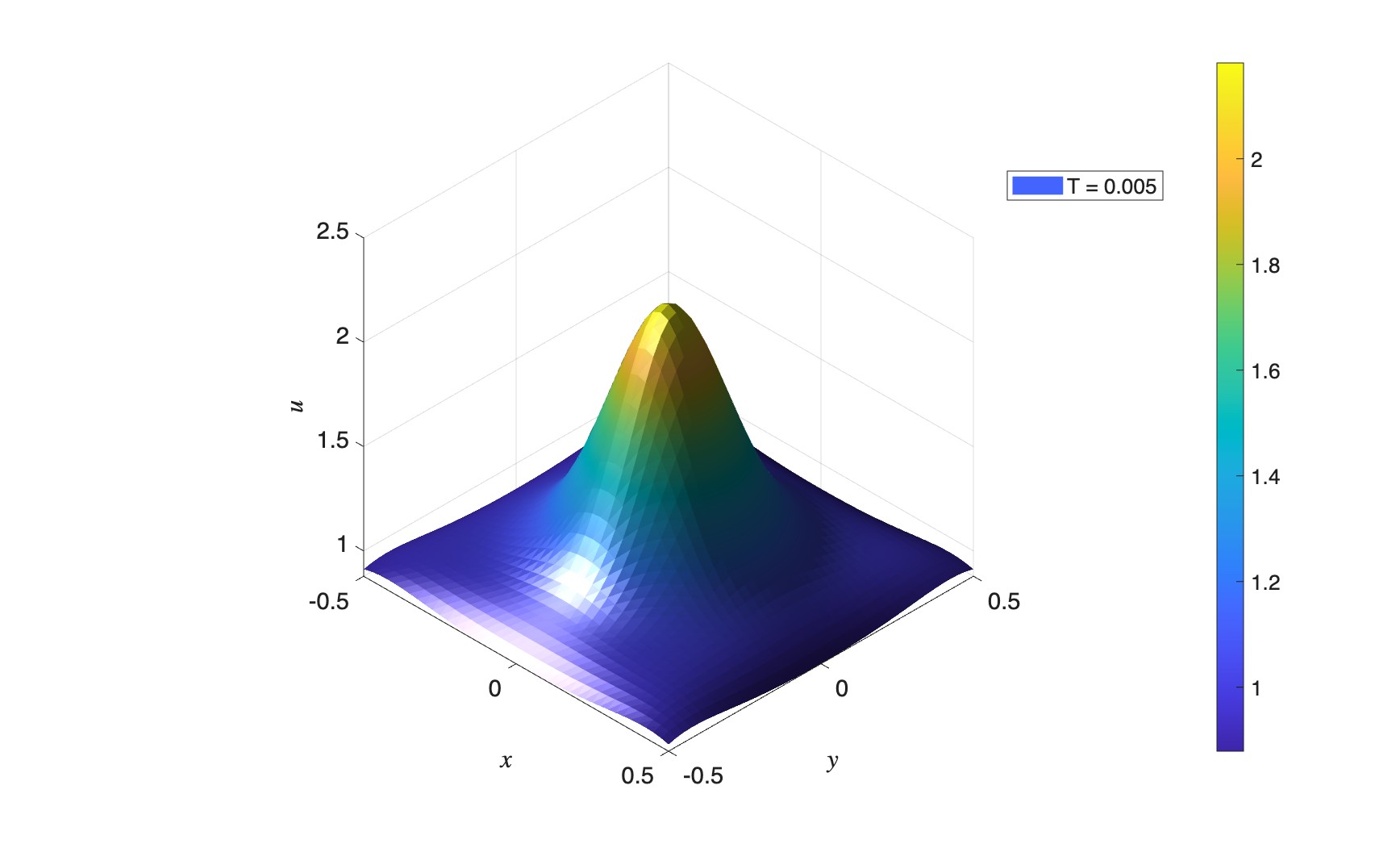}
	\includegraphics[width=.45\textwidth]{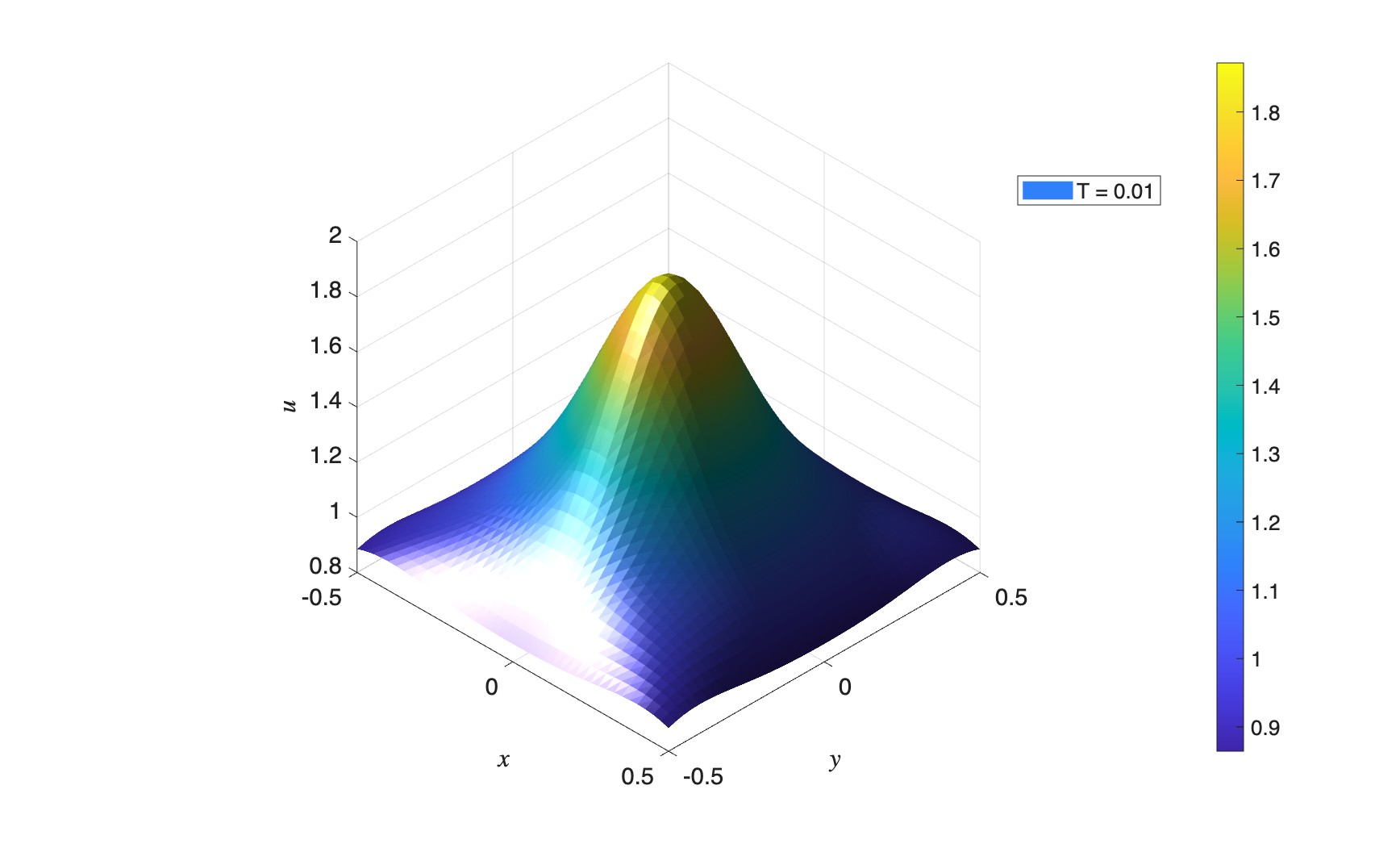}\\
	\includegraphics[width=.45\textwidth]{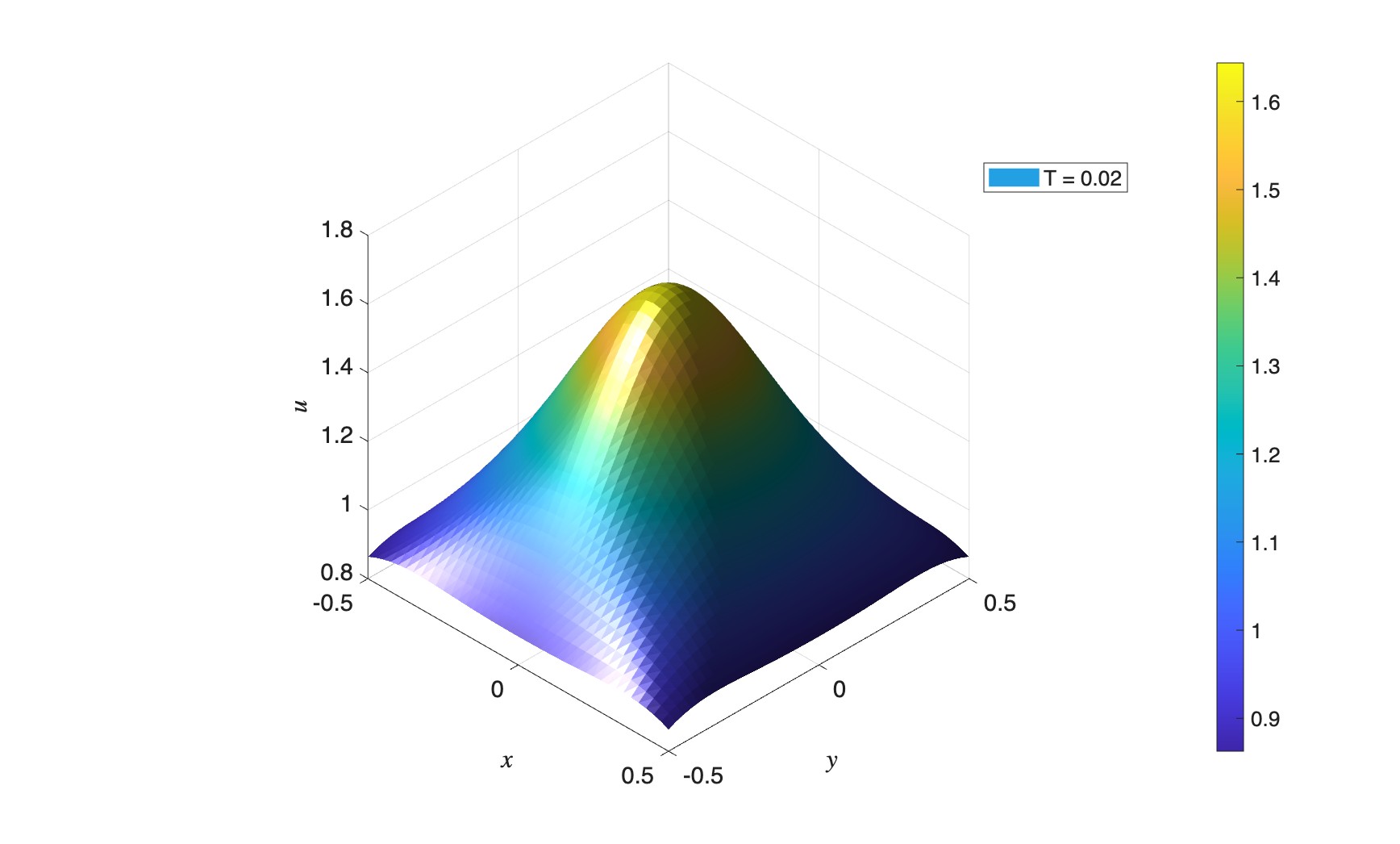}
	\includegraphics[width=.45\textwidth]{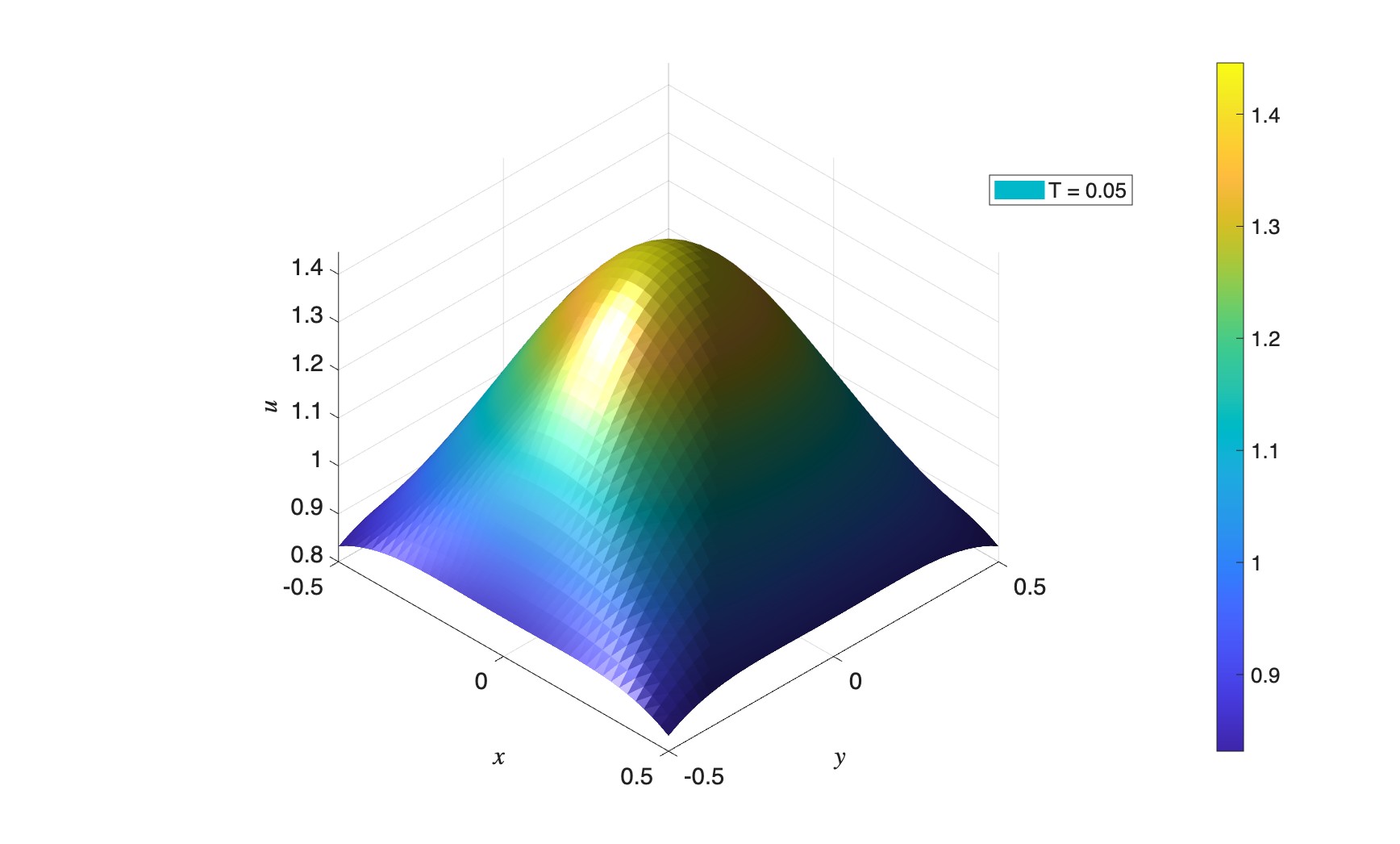}
\caption{Evolution of the expected numerical solution $\mathbb{E}[u_h^M]$ for the stochastic Keller--Segel system with $B(u)=2u$ at the final times $T=0.005$, $0.01$, $0.02$, and $0.05$. The expected solution remains bounded and its peak value decreases monotonically over time.}
\label{fig5.5}
\end{figure}

\begin{figure}[htp]
	\includegraphics[width=.45\textwidth]{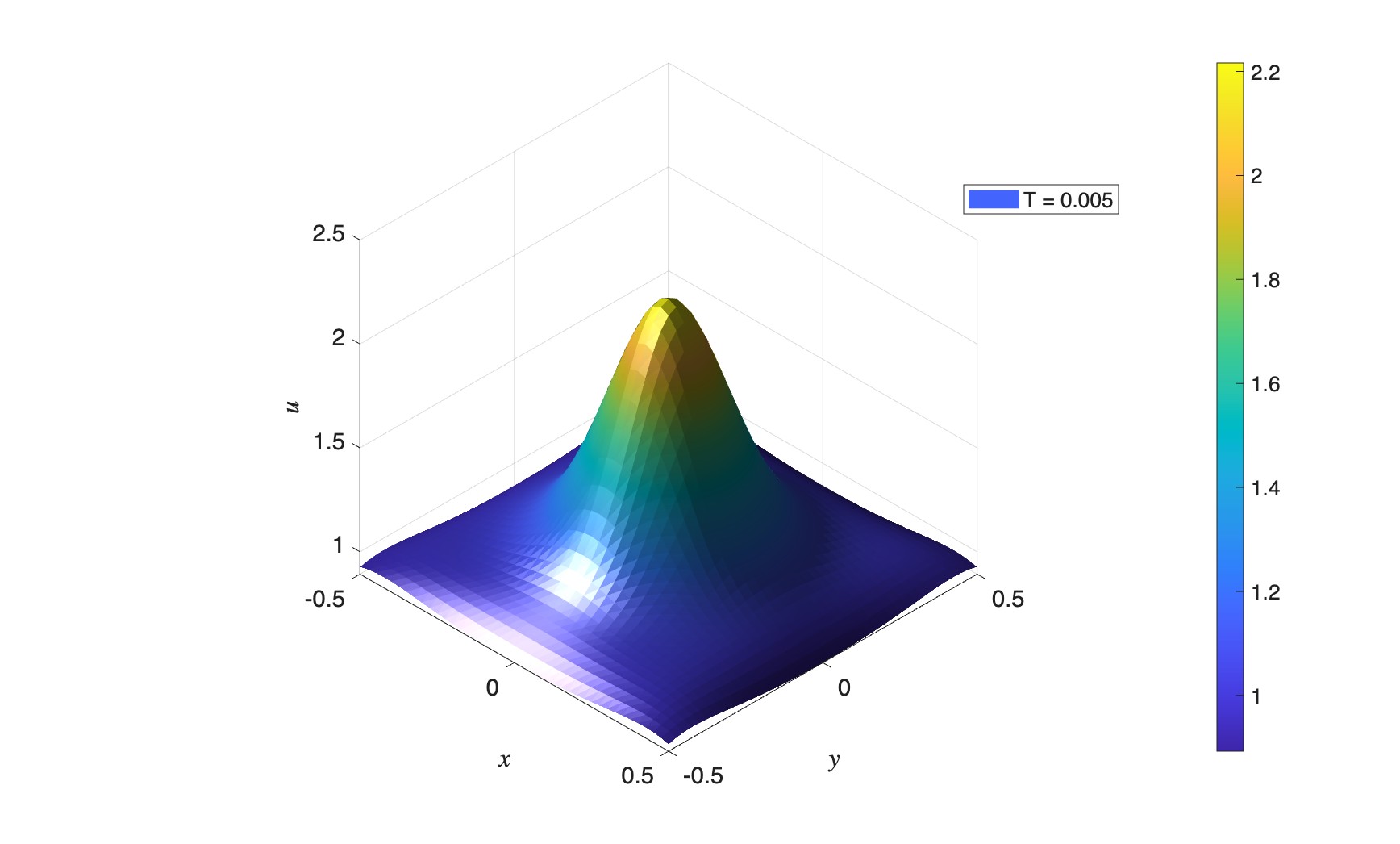}
	\includegraphics[width=.45\textwidth]{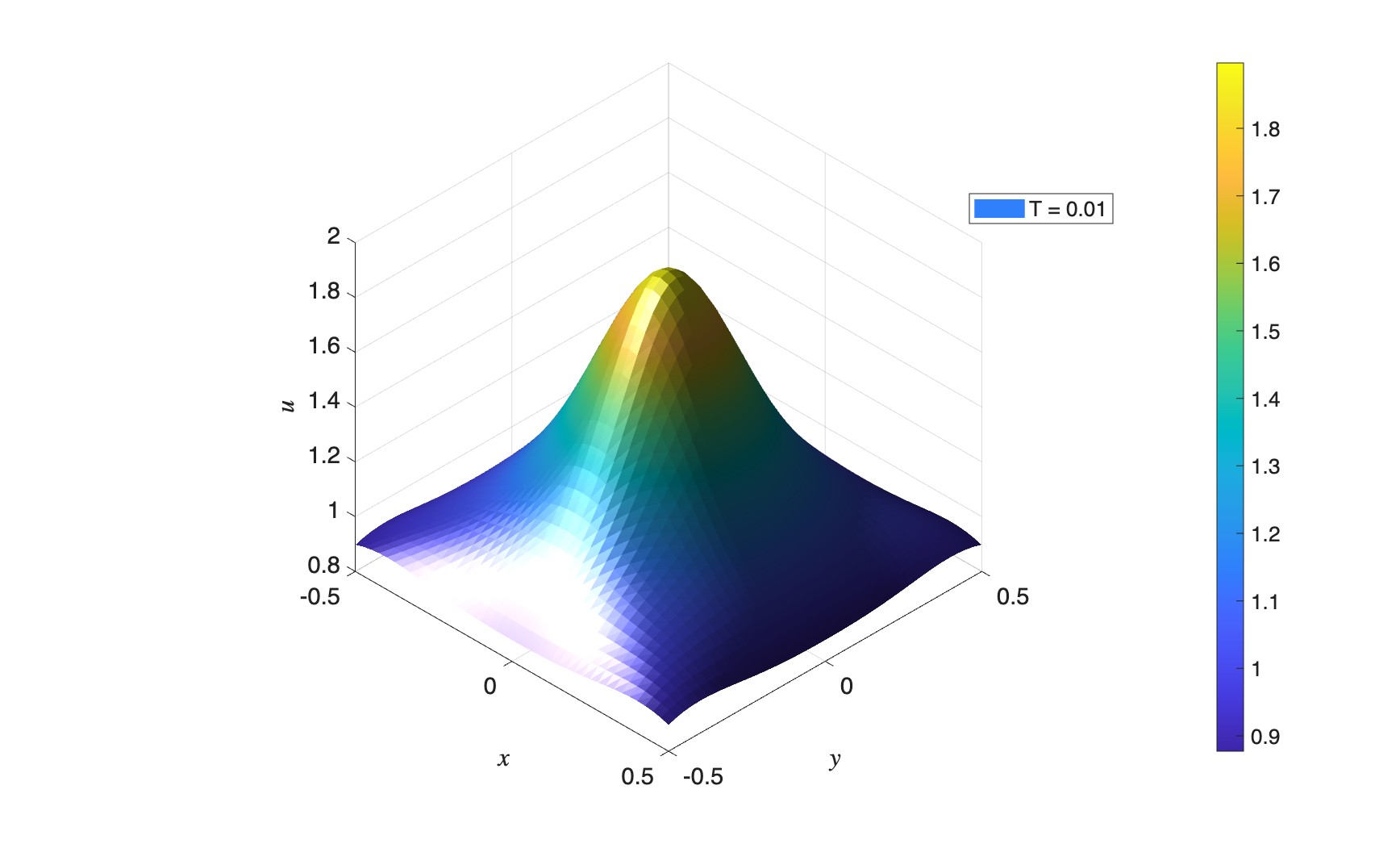}
	\includegraphics[width=.45\textwidth]{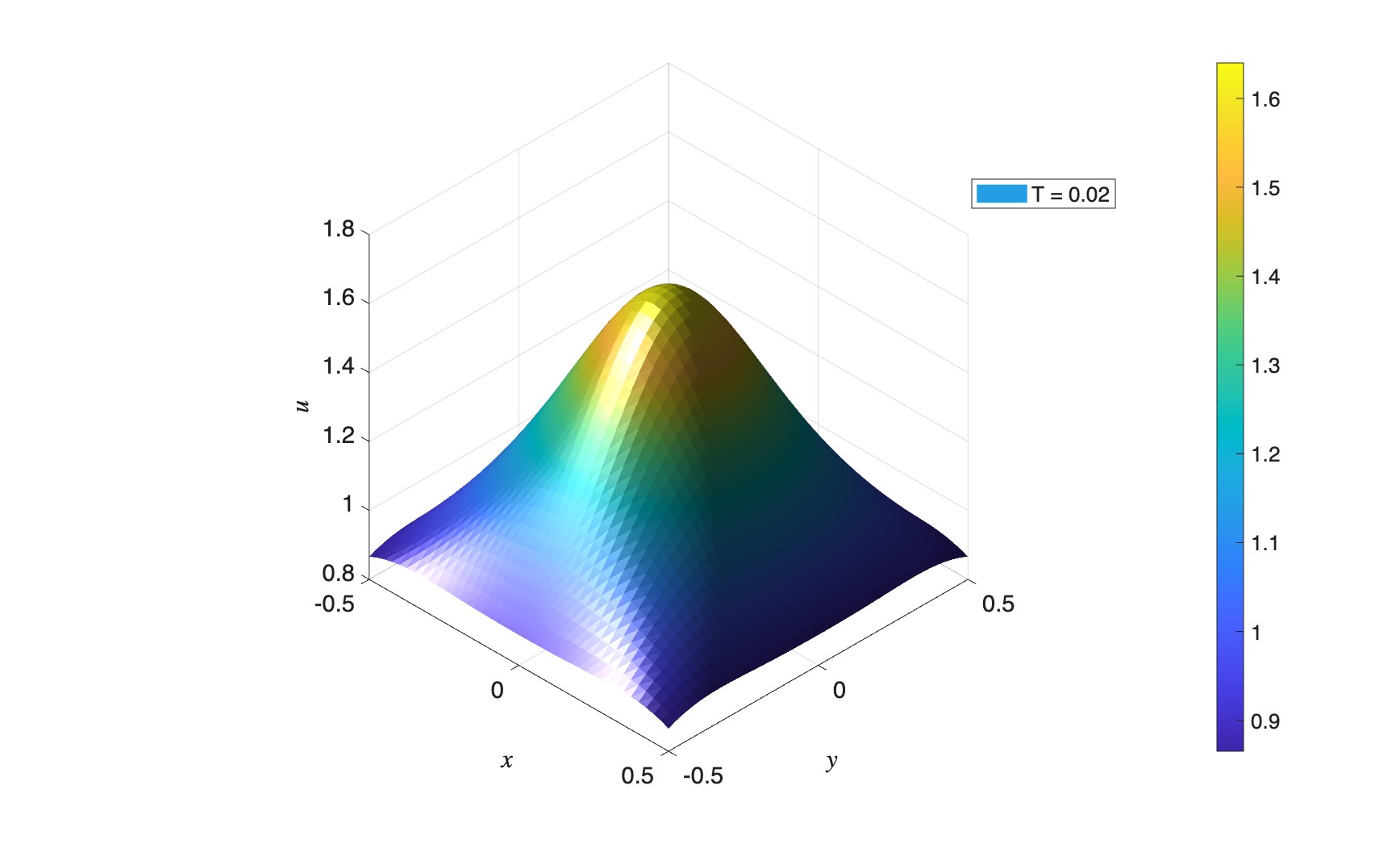}
	\includegraphics[width=.45\textwidth]{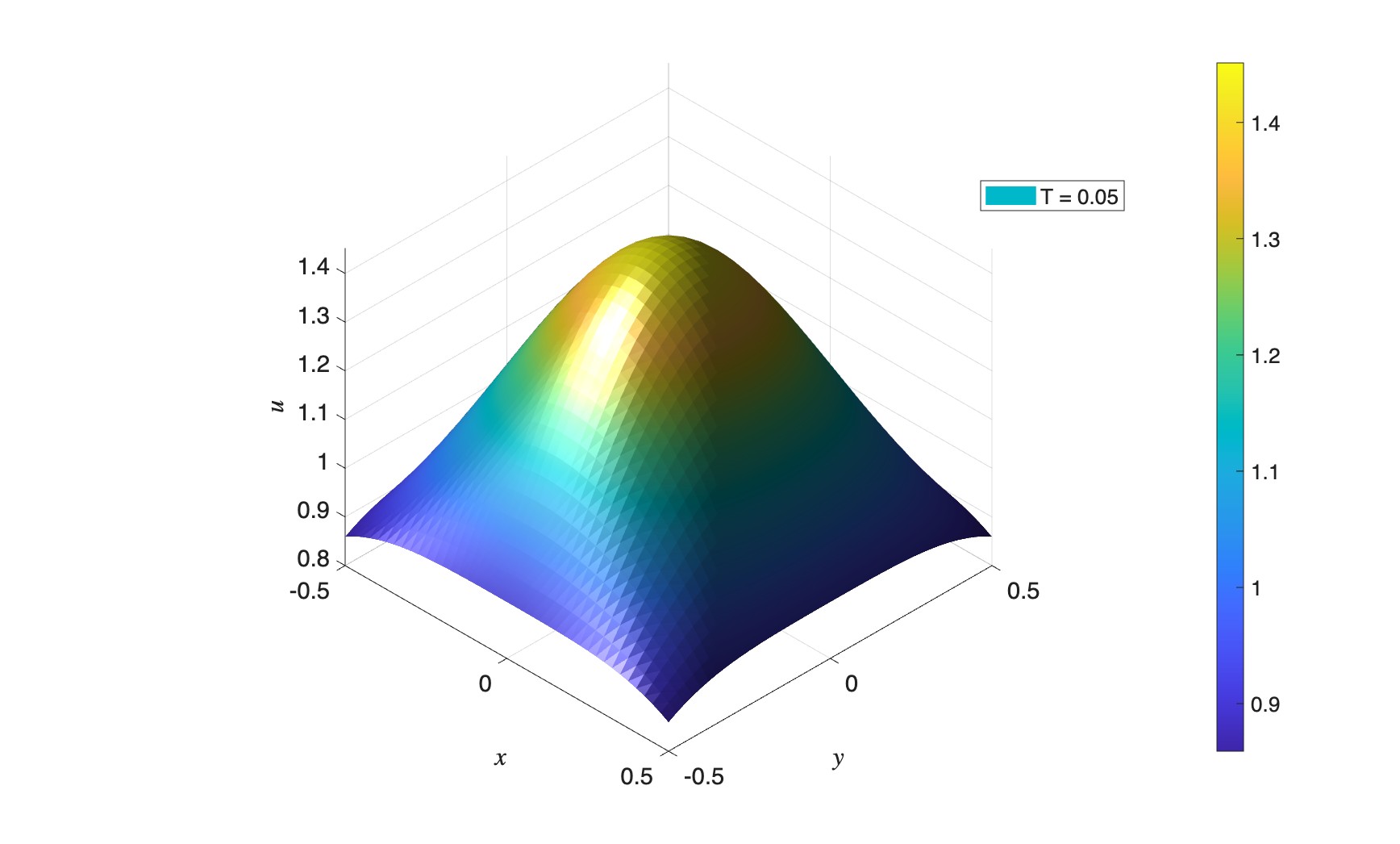}
\caption{Evolution of the deterministic numerical solution obtained by setting $B(u)=0$ at the final times $T=0.005$, $0.01$, $0.02$, and $0.05$. Similar to the expected stochastic solution, the peak value decreases monotonically as time increases.}
\label{fig5.6}
\end{figure}

	\begin{figure}[htp]
	\includegraphics[width=.45\textwidth]{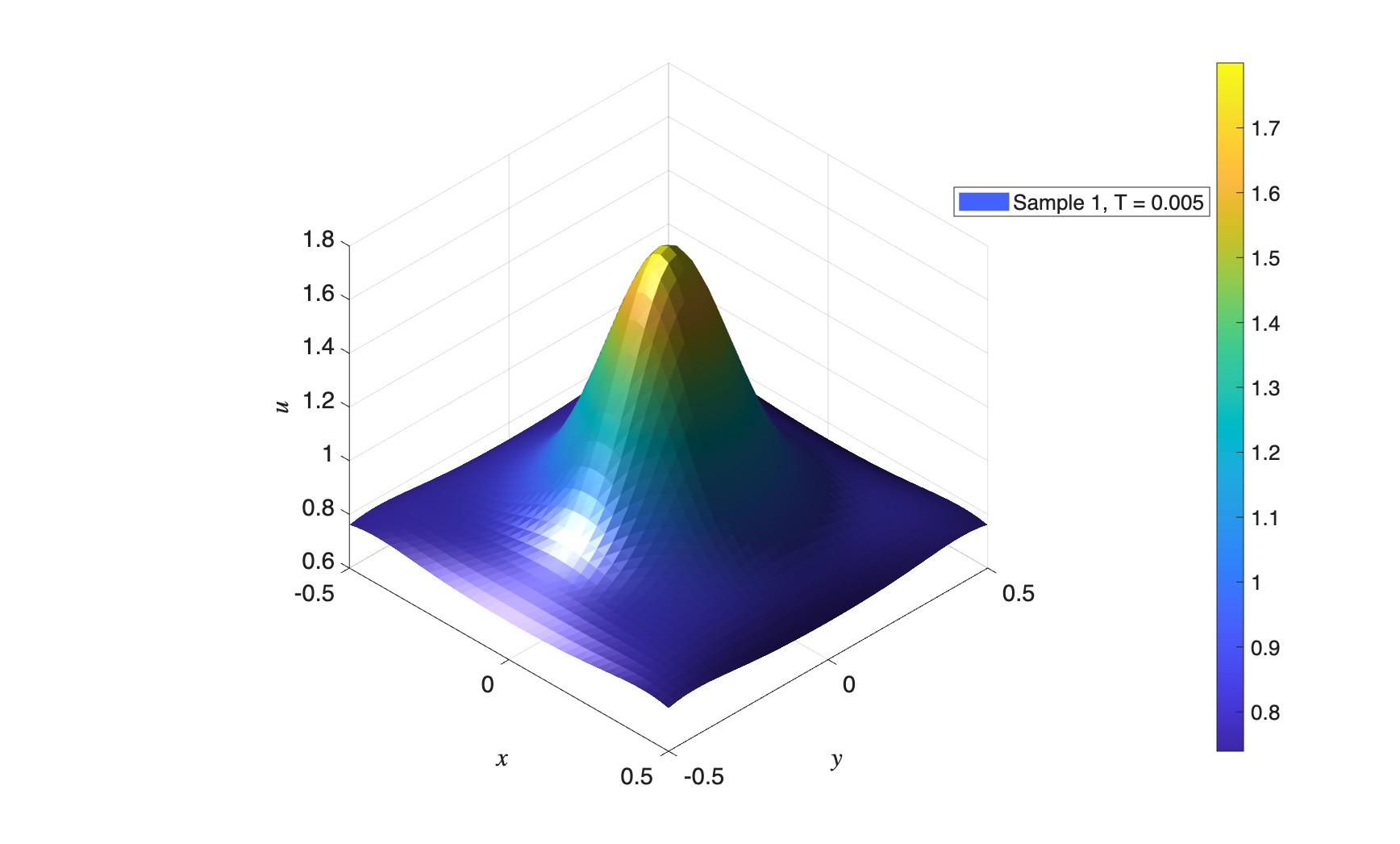}
	\includegraphics[width=.45\textwidth]{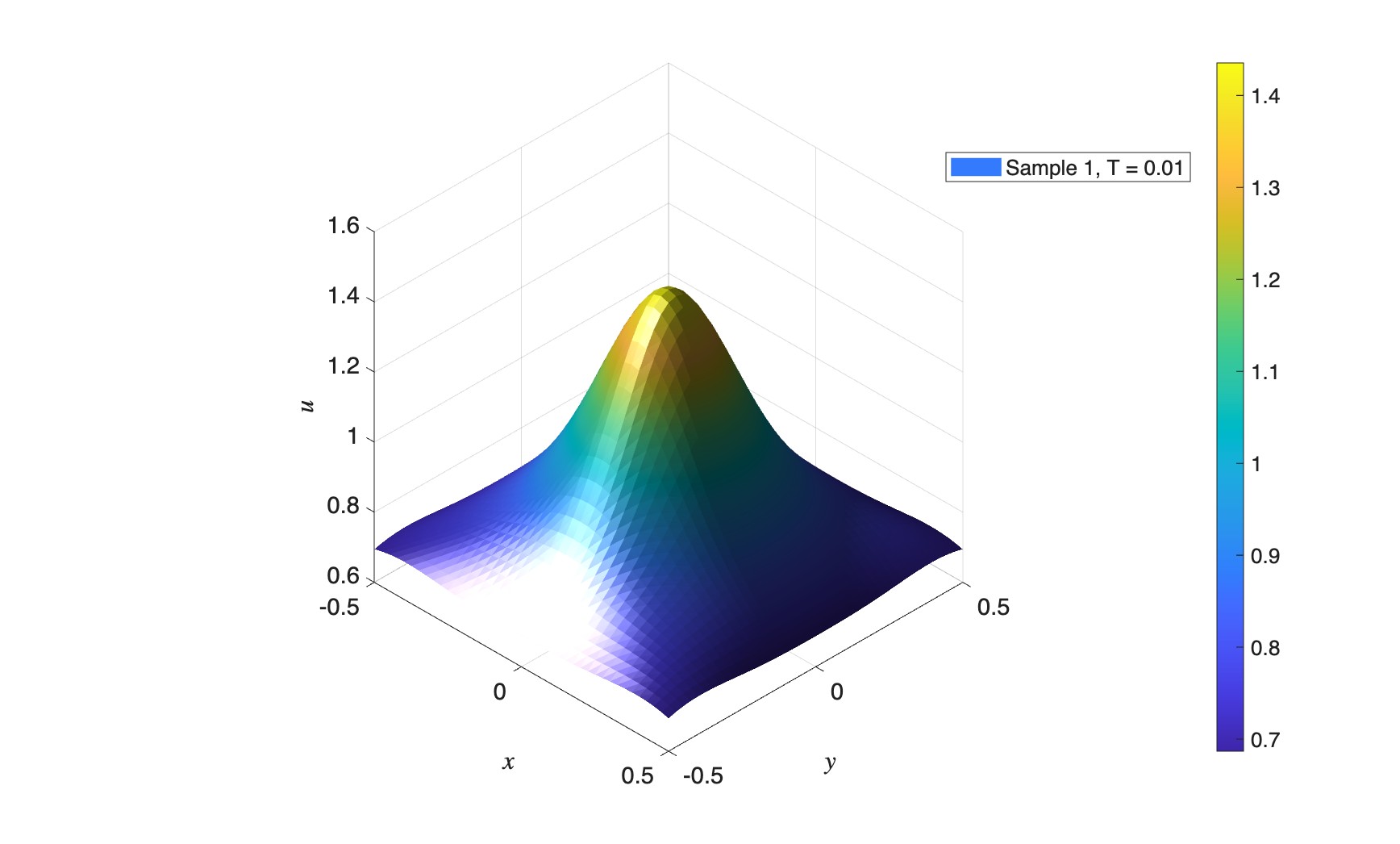}
	\includegraphics[width=.45\textwidth]{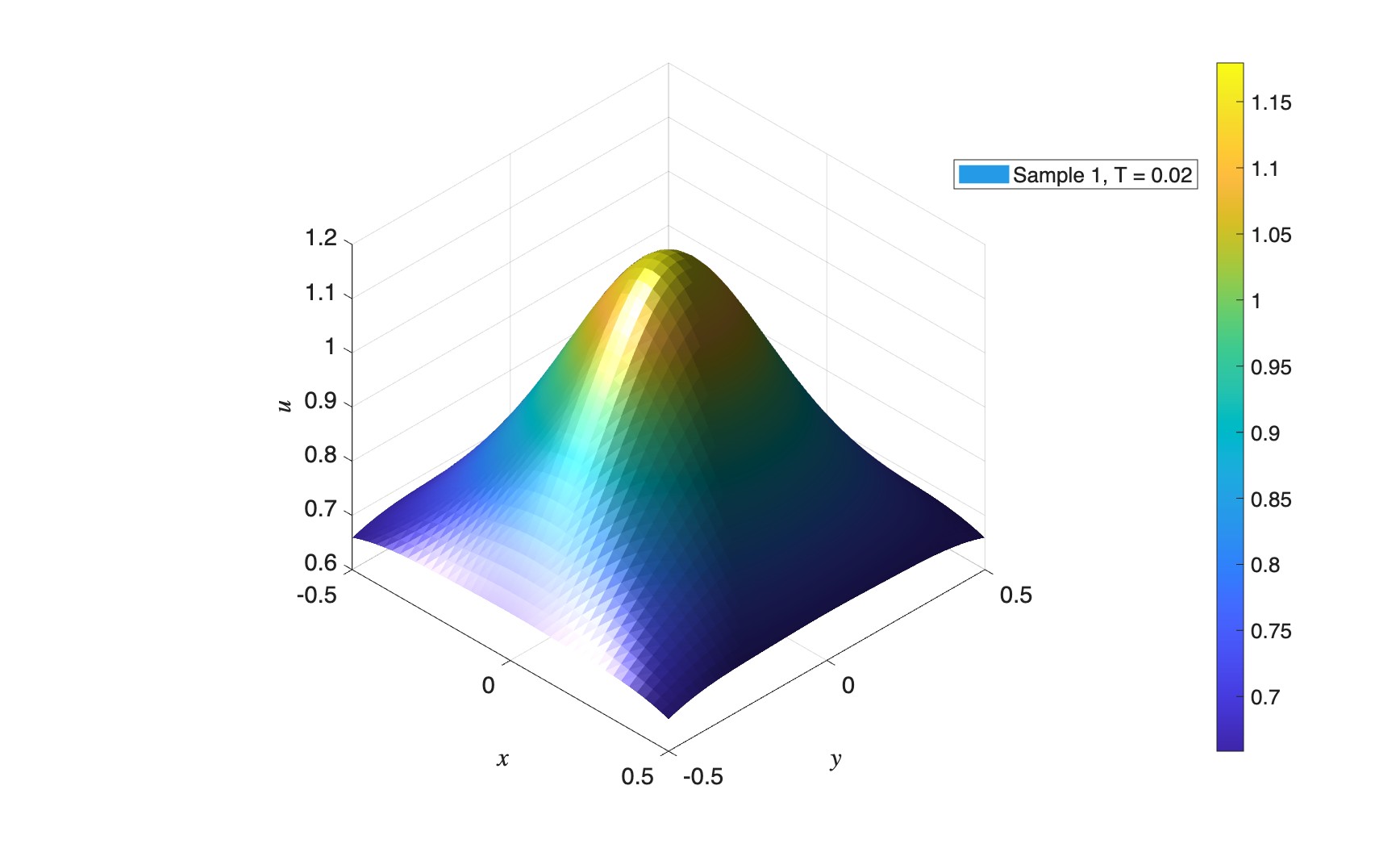}
	\includegraphics[width=.45\textwidth]{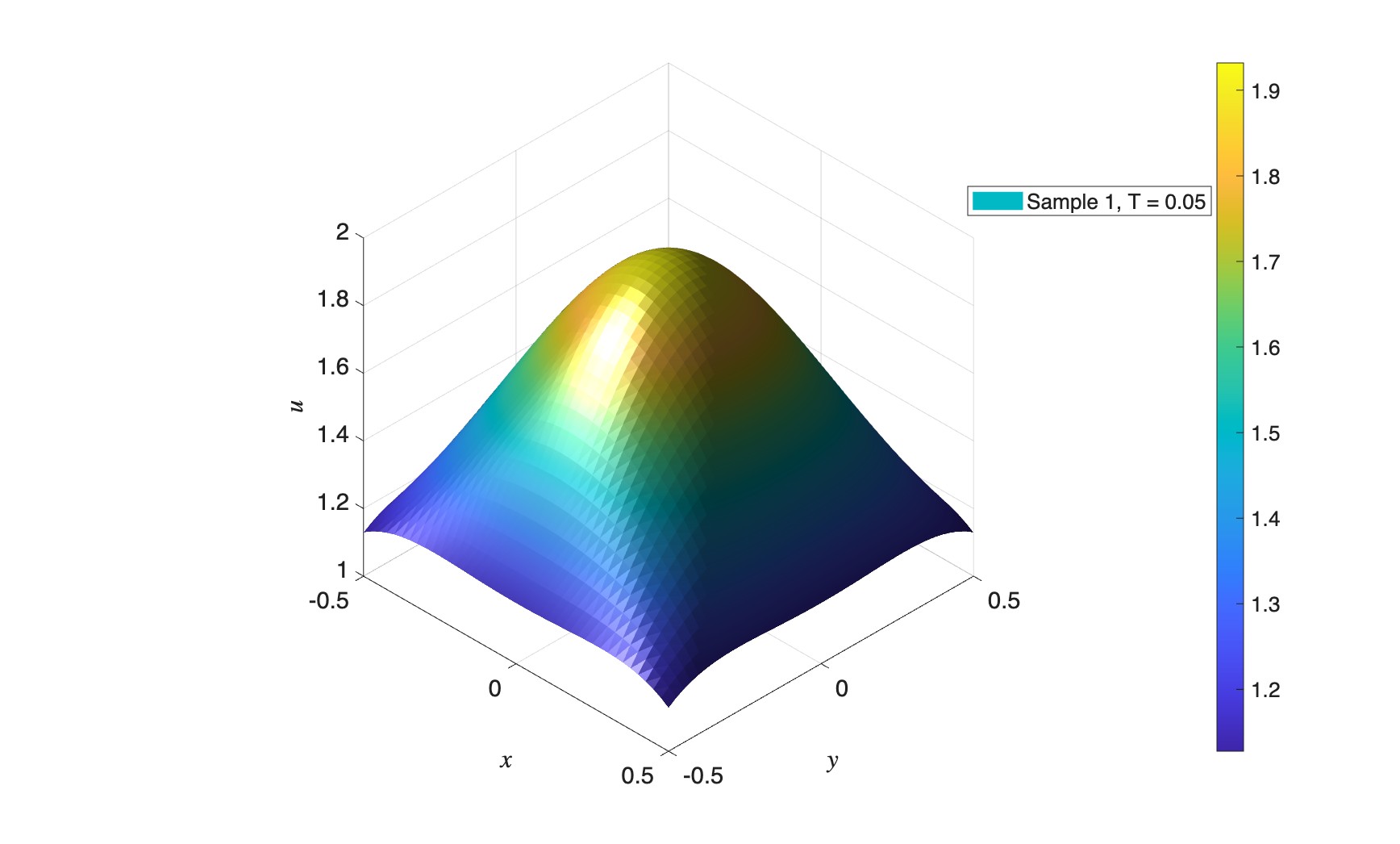}
	\caption{Evolution of the pathwise numerical solution corresponding to the first realization (Sample~1) at the final times $T=0.005$, $0.01$, $0.02$, and $0.05$.}
	\label{fig5.7}
\end{figure}

\begin{figure}[htp]
	\includegraphics[width=.45\textwidth]{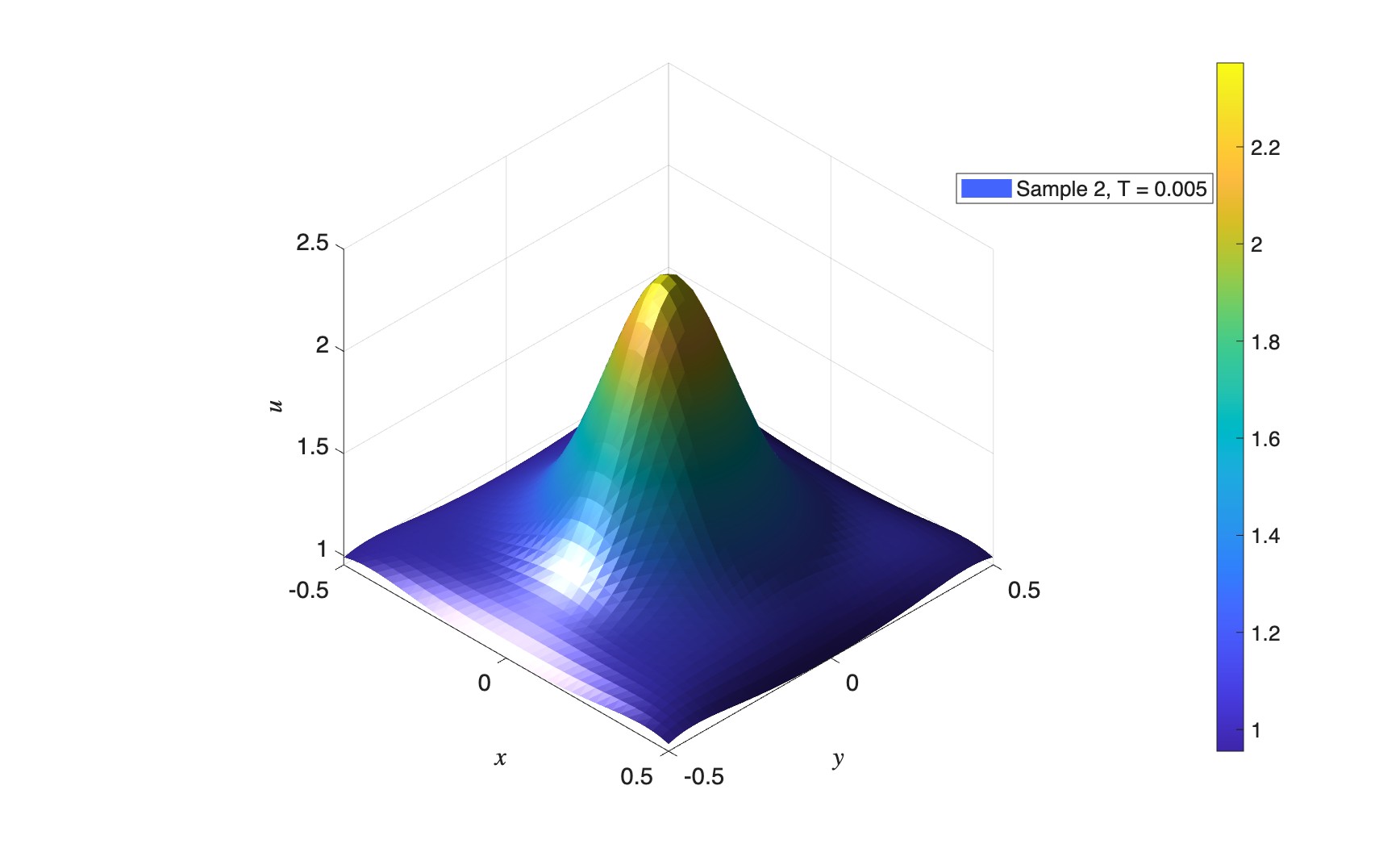}
	\includegraphics[width=.45\textwidth]{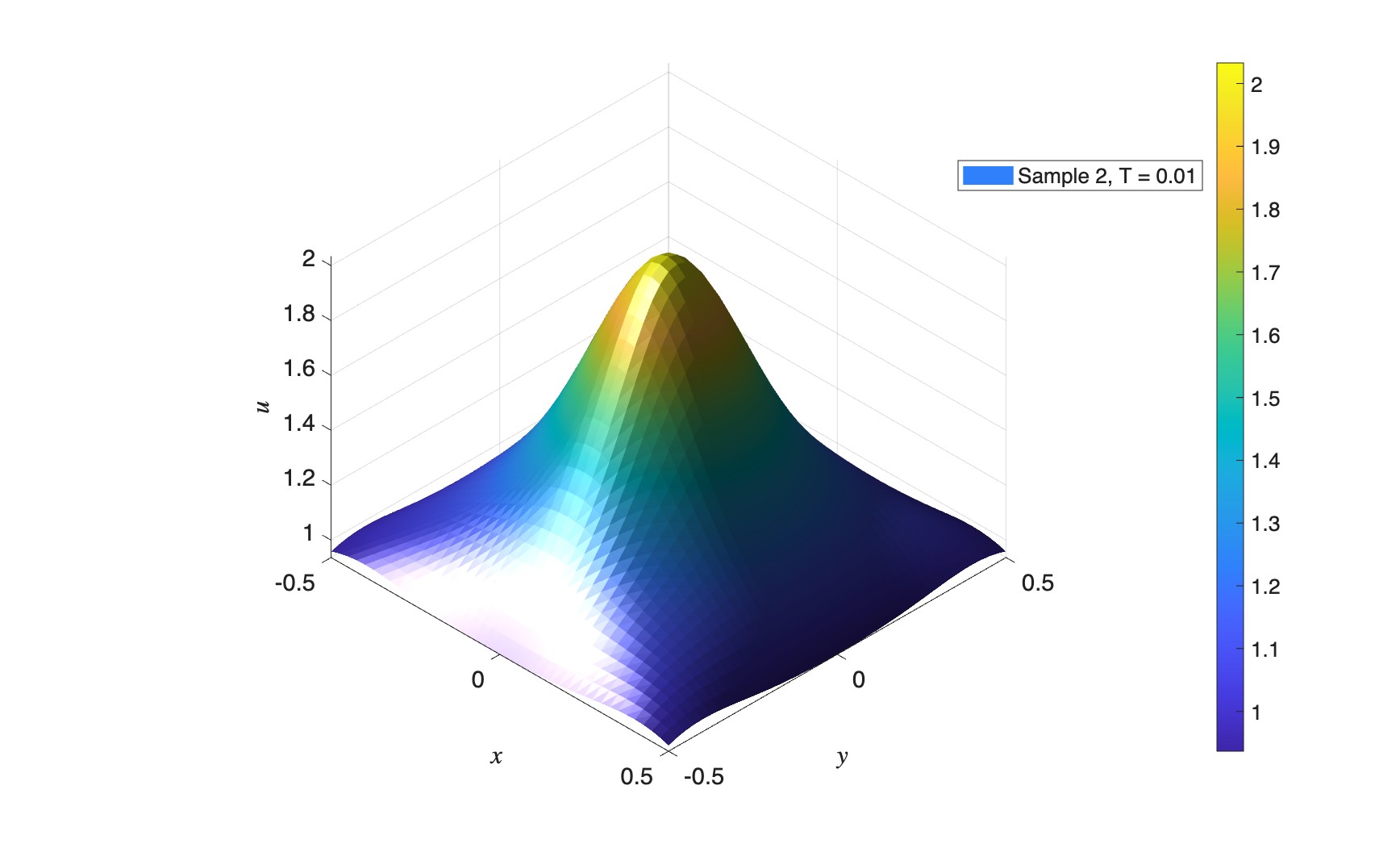}
	\includegraphics[width=.45\textwidth]{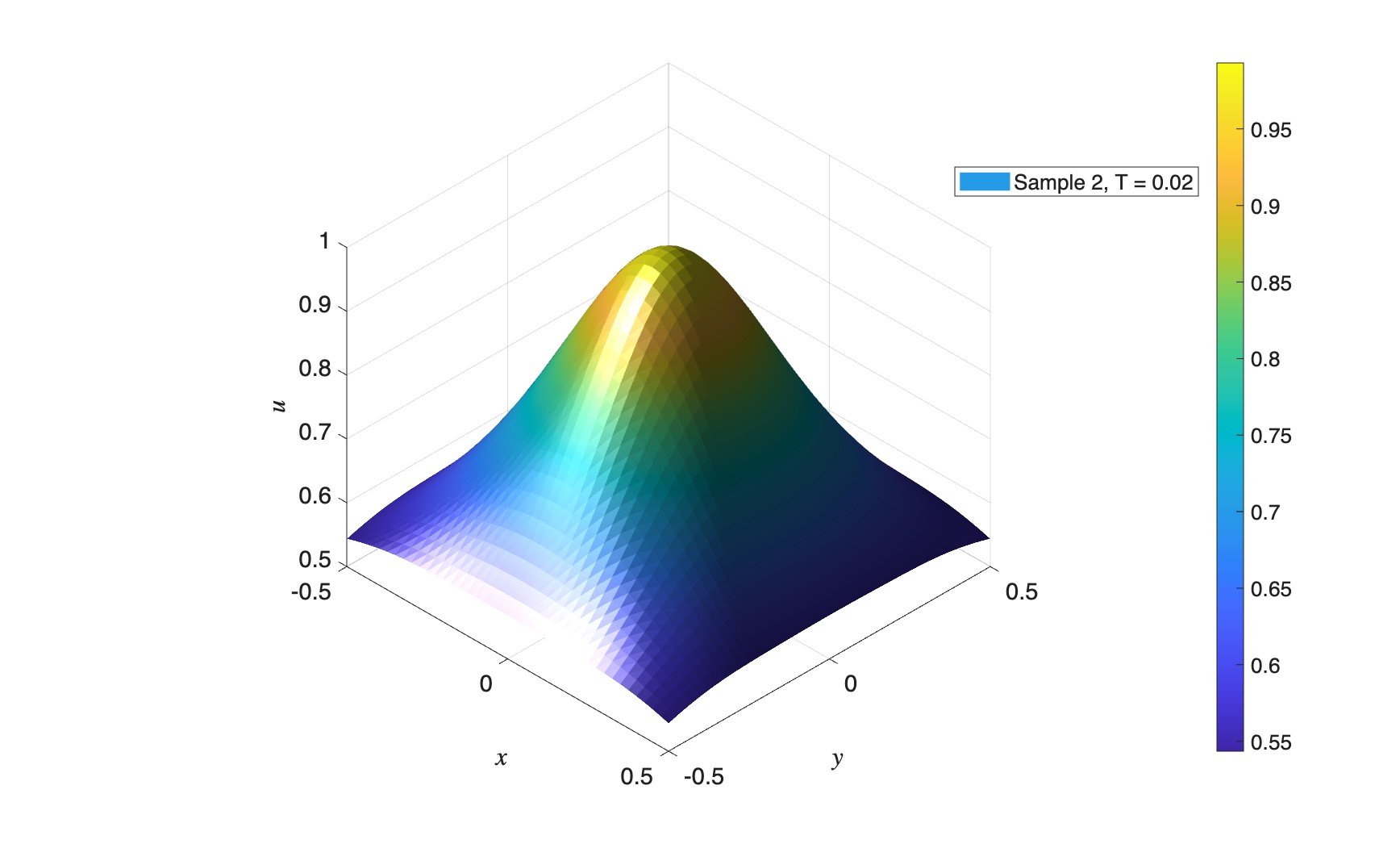}
	\includegraphics[width=.45\textwidth]{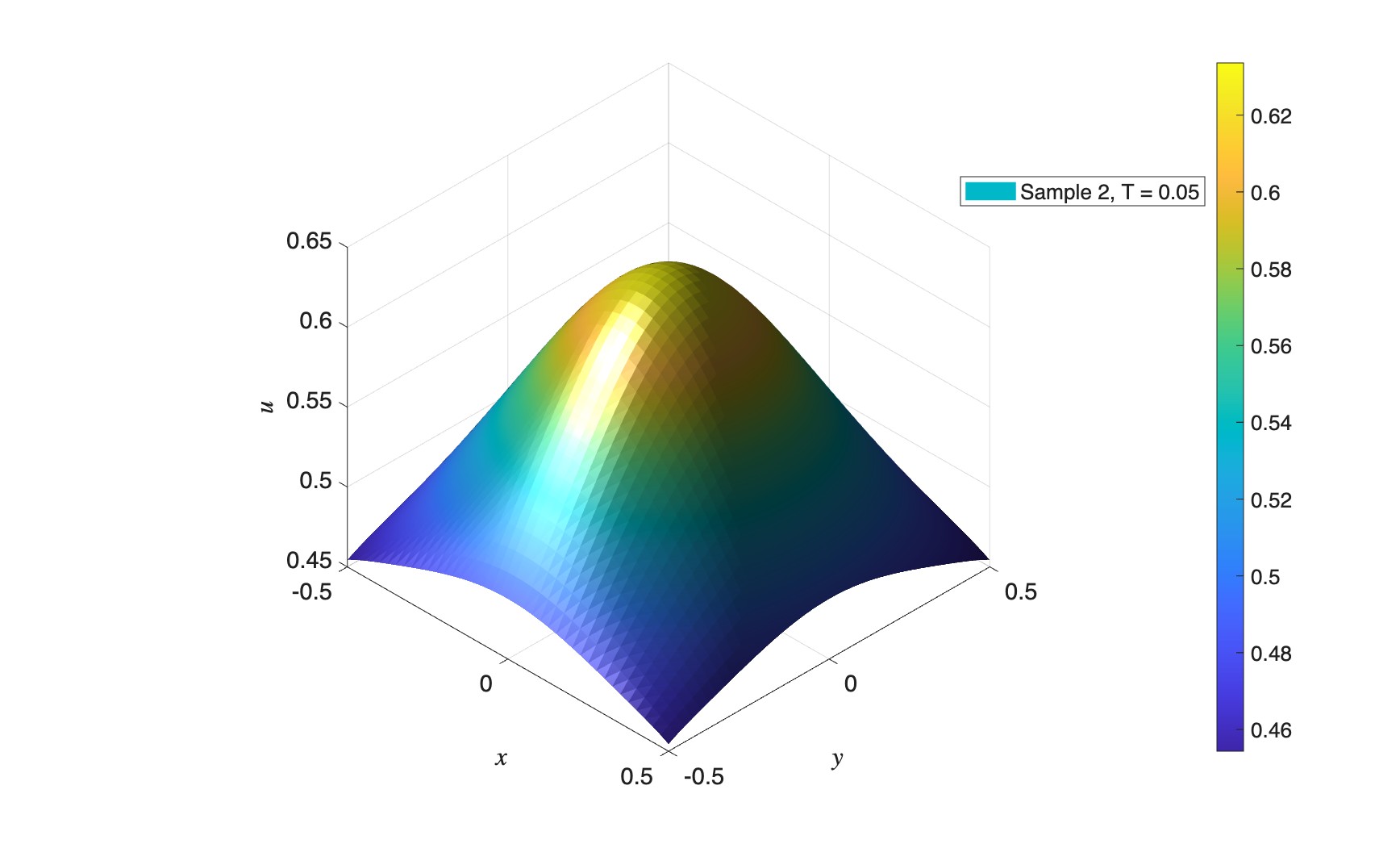}
\caption{Evolution of the pathwise numerical solution corresponding to the second realization (Sample~2) at the final times $T=0.005$, $0.01$, $0.02$, and $0.05$.}
\label{fig5.8}
\end{figure}

\begin{figure}[htp]
	\includegraphics[width=.45\textwidth]{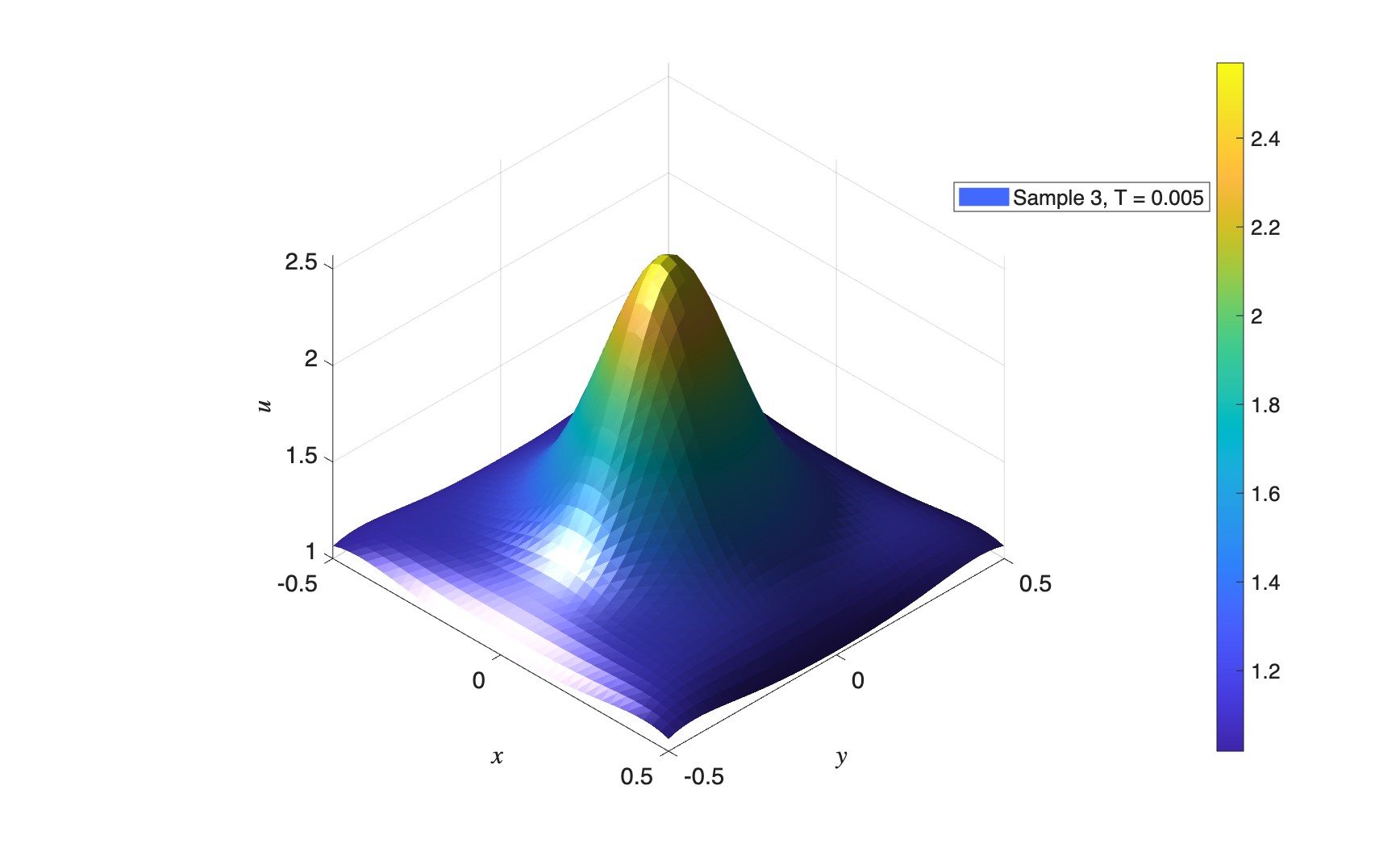}
	\includegraphics[width=.45\textwidth]{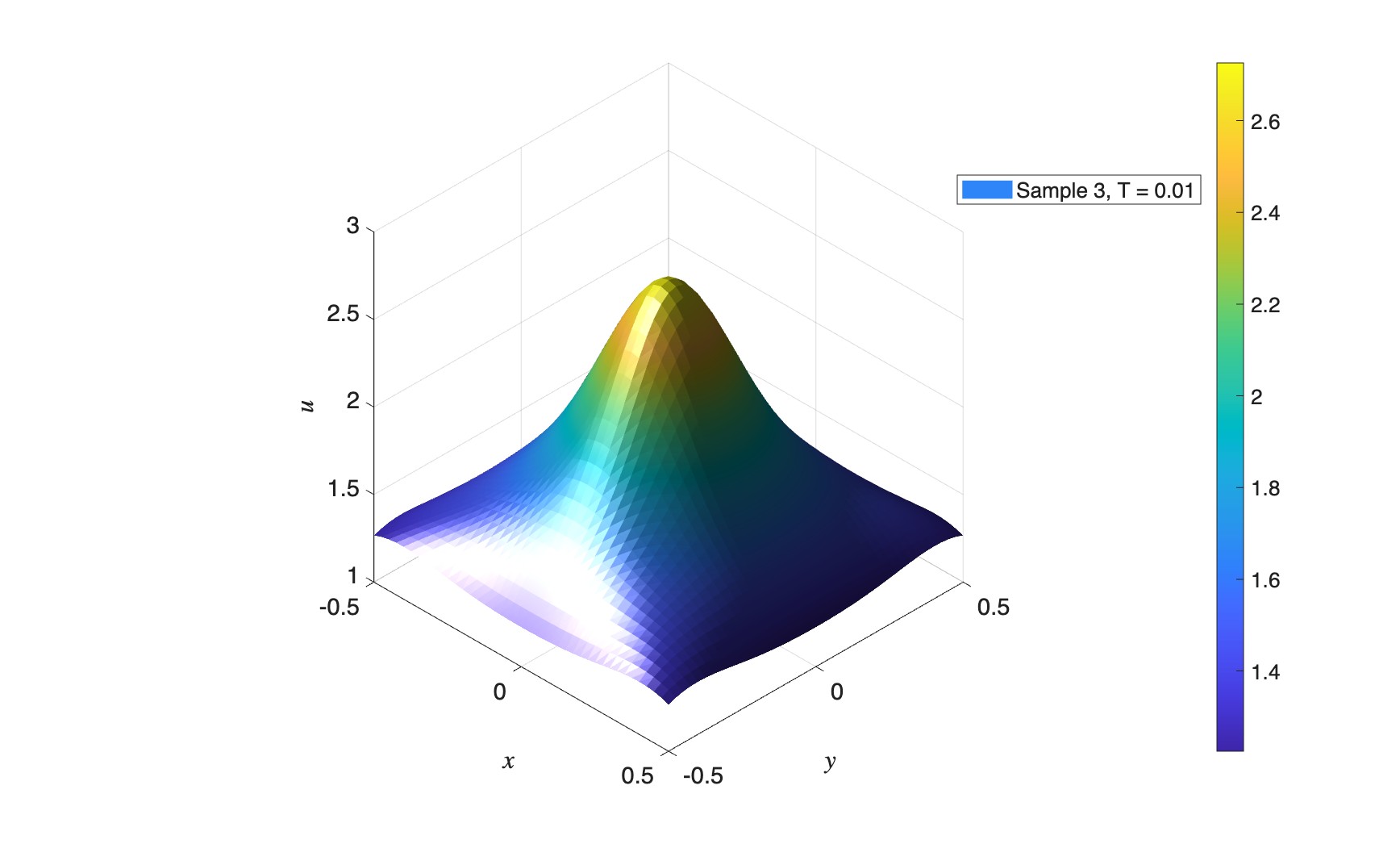}
	\includegraphics[width=.45\textwidth]{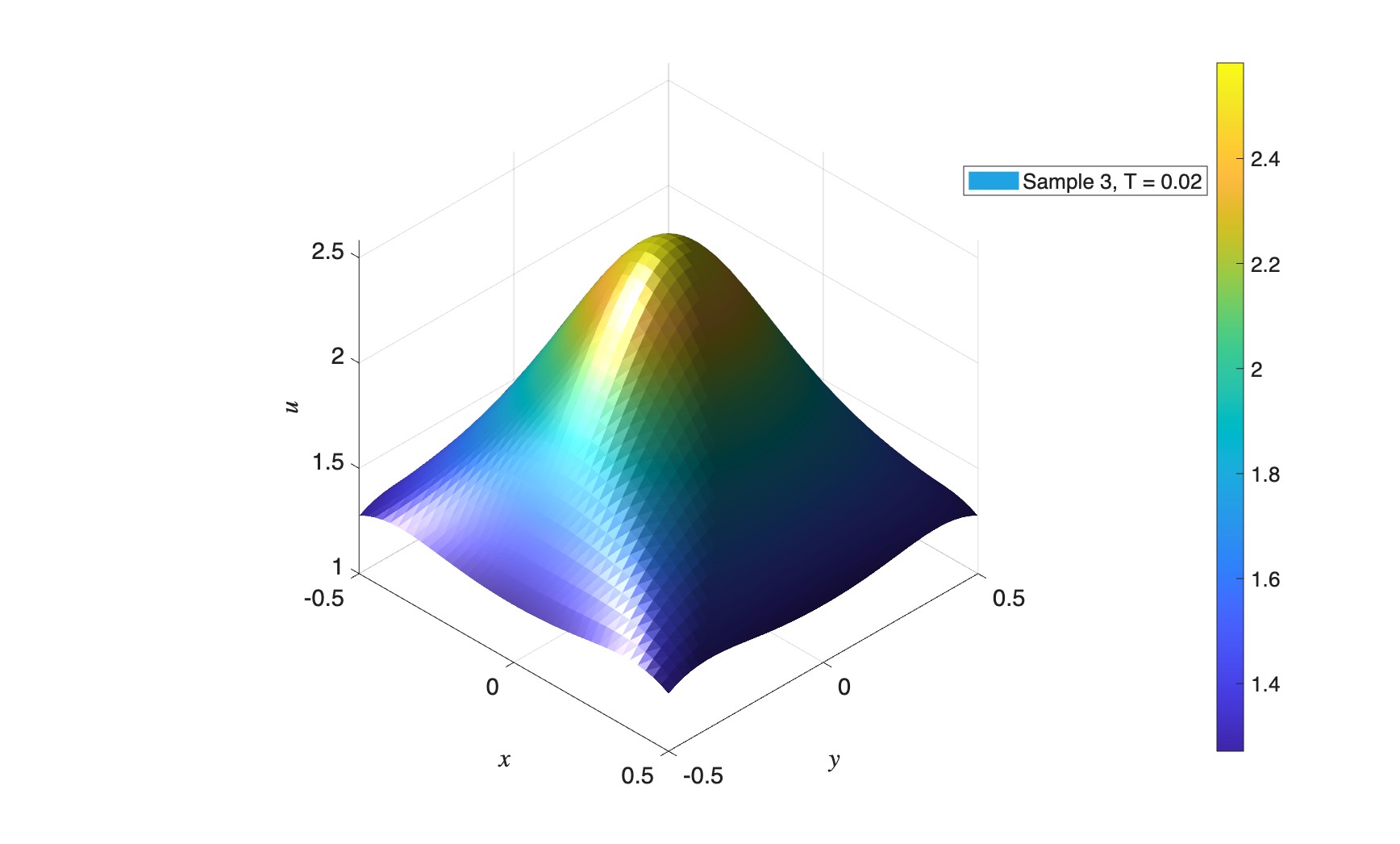}
	\includegraphics[width=.45\textwidth]{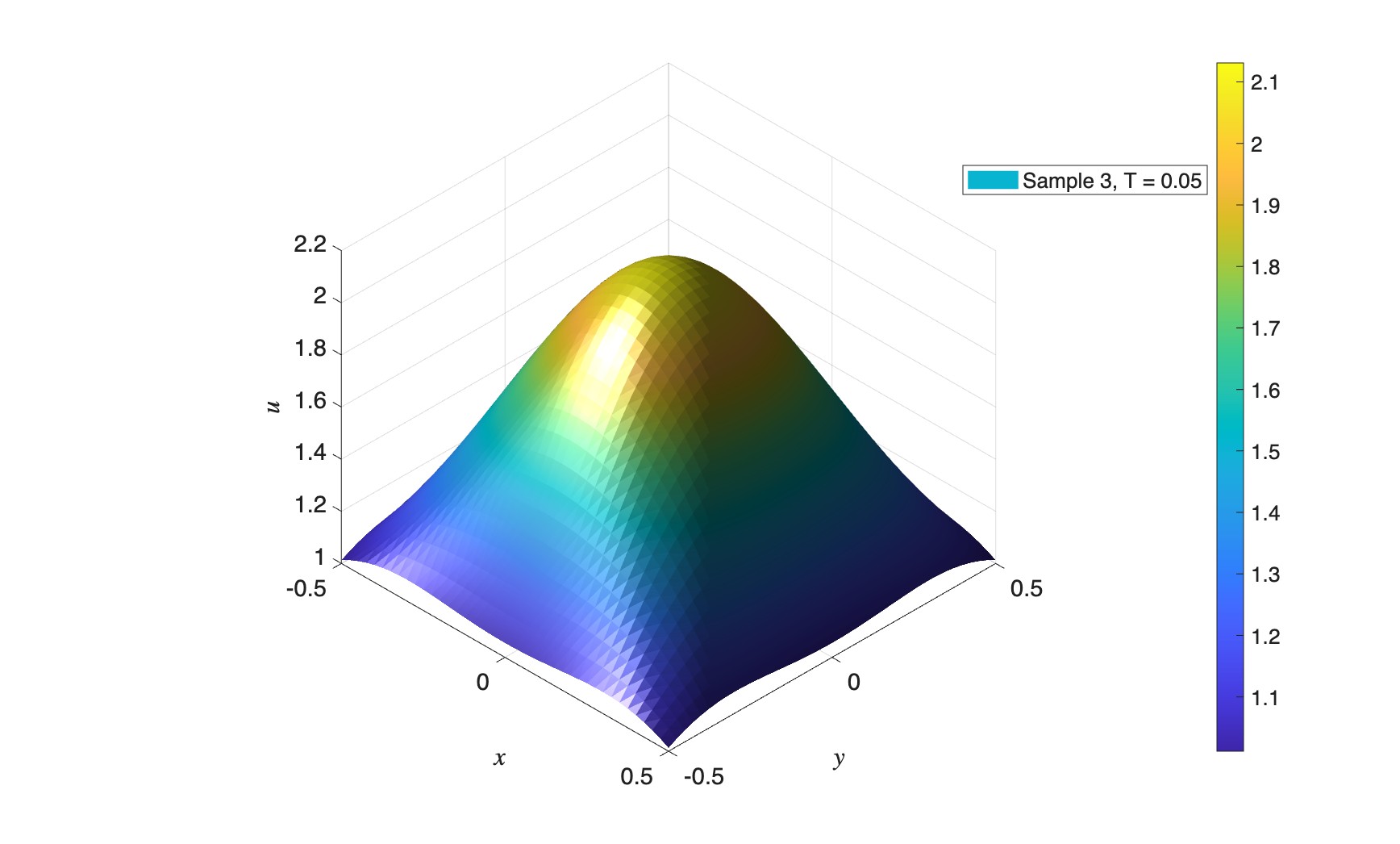}
	\caption{Evolution of the pathwise numerical solution corresponding to the third realization (Sample~3) at the final times $T=0.005$, $0.01$, $0.02$, and $0.05$.}
	\label{fig5.9}
\end{figure}

\section{Conclusion}\label{sec6}

In this paper, we developed and analyzed a splitting mixed finite element method for a stochastic Keller--Segel system with logistic growth driven by multiplicative noise. By introducing the auxiliary variable $\vsigma=\nabla v$ together with a time-lagged strategy, the proposed method decouples the original coupled system into a sequence of subproblems, thereby improving computational efficiency while allowing the use of continuous piecewise linear finite element spaces for all unknown variables.

From the theoretical viewpoint, we established stability estimates and derived rigorous error estimates for the proposed method. By combining a localization argument with suitable stochastic stability techniques, we proved convergence in probability together with explicit convergence rates. To the best of our knowledge, this is the first rigorous finite element error analysis for stochastic Keller--Segel equations driven by multiplicative noise.

The numerical experiments confirm the theoretical convergence rates and demonstrate the effectiveness of the proposed method. Moreover, they show that the numerical solutions successfully capture the global boundedness of the stochastic Keller--Segel system with logistic growth as well as the influence of multiplicative noise on chemotactic aggregation. Future work will focus on establishing strong convergence in $L^p(\Omega)$ with error estimates in full expectation and extending the proposed approach to more general stochastic chemotaxis models.
	
	%			\textbf{Acknowledgments.} 
	
	%\printbibliography[heading=none]
	\bibliographystyle{abbrv}
	\bibliography{references}

\end{document}